\documentclass[a4paper,10pt]{article}




\usepackage{indentfirst,amsmath,amsfonts,amstext,amssymb,amscd,bezier,amsthm}
\usepackage[latin1]{inputenc}
\usepackage[dvips]{graphicx}
\usepackage[latin1]{inputenc}
\usepackage{setspace}
\usepackage{graphicx}
\usepackage{tabularx}
\usepackage{longtable}
\usepackage[usenames,dvipsnames]{pstricks}
\usepackage{epsfig}
\usepackage{pst-grad}
\usepackage{pst-plot}
\usepackage[all]{xy}
\usepackage{hyperref}
\usepackage[T1]{fontenc}
\usepackage{ae,aecompl}
\usepackage{indentfirst}
\usepackage{xspace}
\usepackage{color}
\usepackage{psfrag}
\usepackage{mathrsfs}
\usepackage{upgreek}                            
\usepackage{makeidx}                            
\usepackage{fancyhdr}
\usepackage{fancybox}
\usepackage[rm]{titlesec}
\usepackage{float}   
\usepackage{enumerate}     


\input xy
\xyoption{all}
\newcommand\numberthis{\addtocounter{equation}{1}\tag{\theequation}}

\newcommand{\p}{\mathbb{P}}

\newcommand{\F}{\mathbb{F}}

\newcommand{\lra}{\longrightarrow}

\newcommand{\Lra}{\Longrightarrow}

\newcommand{\ff}{\mathcal{F}}
\newcommand{\hh}{\mathcal{H}}
\newcommand{\qq}{\mathcal{Q}}
\newcommand{\G}{\mathcal{G}}
\newcommand{\fq}{\mathbb{F}_q}

\newcommand{\cc}{\mathcal C}

\newcommand{\fqc}{\overline{\mathbb{F}}_q}

\theoremstyle{plain}
\newtheorem{thm}{Theorem}[section]

\newtheorem{prop}[thm]{Proposition}
\newtheorem{lem}[thm]{Lemma}

\newtheorem{rem}[thm]{Remark}
\newtheorem{ex}[thm]{Example}


 \font\numberfont= pzcmi scaled
3000
\titleformat{\chapter}[display]
  {\normalfont\Large 
  }
  {
   \filright
   \rule[32pt]{.7\linewidth}{4pt}
   \hspace{-8pt}
   \shadowbox{
   \begin{minipage}{.15\linewidth}
     \begin{center}
          \textsl{\bf {\large \chaptertitlename}}\\
       \vspace{1ex}
       {\bf {\numberfont \thechapter}}\\
       \vspace{1ex}
     \end{center}
   \end{minipage}}
  }
  {-10pt}
  {\filcenter
           \sl
           \bf
              \Huge
     }
  [\vspace{-1cm}\singlespacing\hfill\rule{.8\textwidth}{0.5pt}\\
\vskip-2.8ex\hfill\rule{.7\textwidth}{4pt}\onehalfspacing\vspace*{-1ex}]
\titlespacing{\chapter}{0pt}{*4}{*1}

\titleformat{\section}[block]
{\normalfont\bfseries} {\thesection}{0.5em}{}

\titleformat{\subsection}[block]
{\normalfont\large\bfseries} {\thesubsection}{0.5em}{}

\setlength{\unitlength}{1cm}
\setlength{\baselineskip}{1.4\baselineskip}


\setlength{\oddsidemargin}{0.5cm}
\setlength{\textwidth}{16cm}
\setlength{\textheight}{23.5cm}
\setlength{\topmargin}{-0.7cm}
\setlength{\evensidemargin}{0.5 cm} 

\setlength{\topmargin}{-0.1 in} \setlength{\textwidth}{16cm}
\setlength{\textheight}{23cm} \setlength{\oddsidemargin}{-0.2cm}
\setlength{\evensidemargin}{-0.2cm}

\makeindex                                      
\makeatletter
\numberwithin{equation}{section}

\begin{document}

\title{Frobenius nonclassicality of  Fermat curves with respect to cubics}

\author{\textbf{Nazar Arakelian} \\
 \small{IMECC, Universidade Estadual de Campinas, Campinas, Brazil} \\
 \textbf{Herivelto Borges}\\
 \small{ICMC, Universidade de S\~ao Paulo, S\~ao Carlos, Brazil}}

\maketitle
 \begin{abstract}
 For Fermat curves  $\ff:aX^n+bY^n=Z^n$ defined over $\fq$,  we establish necessary and sufficient conditions for $\ff$ to be $\fq$-Frobenius nonclassical with respect to the linear system of plane cubics. In the  $\fq$-Frobenius nonclassical cases, we determine explicit formulas for the  number $N_q(\ff)$  of $\fq$-rational points on $\ff$.
 For the remaining Fermat curves, nice upper bounds for  $N_q(\ff)$ are immediately given by the St\"ohr-Voloch Theory.
 \end{abstract}


\section {Introduction}\label{intro}

Let  $\fq$ be the finite field with $q=p^h$ elements. For an irreducible   Fermat curve
\begin{equation}\label{Fermat}
\ff:aX^n+bY^n=Z^n 
\end{equation}
defined over $\fq$, let $N_q(\ff)$ denote its  number of $\fq$-rational points. The celebrated   Hasse-Weil Theorem   gives
\begin{equation}\label{H-W}
|N_q(\ff)-(q+1)| \leq (n-1)(n-2)\sqrt{q}.
\end{equation}
In 1986,  St\"ohr  and Voloch introduced a new technique to bound the number of rational points
on curves over finite fields \cite{SV} . Their method uses some data collected from  embeddings of the curve in  projective spaces, and  in many circumstances  it gives improvements upon the Hasse-Weil bound. 

For example,  let $\ff$  be a Fermat  curve  as given in \eqref{Fermat}.  For   $s \in \{1,\ldots,n-1\}$, consider the linear system $\Sigma_s$  of all curves in $\p^2(\fqc)$ of degree $s$. Associated to $\Sigma_s$, there exists  a sequence of non-negative integers $(\nu_0,\ldots,\nu_{M-1})$ (depending   on $\ff$, $q$ and $s$) such that $0=\nu_0<\cdots<\nu_{M-1}$ and $M={s+2 \choose 2}-1$. If $\nu_i=i$ for all $i=0,\ldots,M-1$, then the curve $\ff$ is called $\fq$-Frobenius classical with respect to $\Sigma_s$.  Otherwise, $\ff$ is called $\fq$-Frobenius nonclassical. Together with \cite[Proposition 2.4]{SV} and some remarks in Section 3 of \cite{SV}, the St\"ohr-Voloch Theorem \cite[Theorem 2.13]{SV} applied to $\Sigma_s$  gives
\begin{equation}\label{SV-s}
N_q(\ff) \leq \displaystyle\frac{n(n-3)(\nu_1+\cdots +\nu_{M-1})+sn(q+M)}{M}-\sum_{P \in \ff}\frac{A(P)}{M},
\end{equation}
where
\begin{equation*}
A(P)= \begin{cases} 
\sum_{l=1}^{M}(j_l-\nu_{l-1})-M,  \text{ if }\quad P \text{ is  an } \fq\text{-rational  point}\\
\sum_{l=1}^{M-1}(j_l-\nu_{l}),  \text{ otherwise },
\end{cases} 
  \end{equation*}
and  $0=j_0<j_1 <\cdots <j_M$ are the $(\Sigma_s,P)$-orders.

If $\ff$ is $\fq$-Frobenius classical with respect to $\Sigma_s$, then bound (\ref{SV-s}) reads (cf. \cite[Theorem 1]{GV2})
\begin{equation}\label{SV-sc}
N_q(\ff) \leq \displaystyle\frac{n(n-3)(M-1)}{2} +\displaystyle\frac{sn(q+M)}{M}-\displaystyle\frac{3nA+dB}{M},
\end{equation}
where $B=sn-M$,
\begin{equation}\nonumber
A=\frac{1}{6}\left((n-s-1)s(s-1)(s+4)+\frac{s(s-1)(s-2)(s+5)}{4}\right),
\end{equation}
and $d$ is the number of $\fq$-rational points $P=(u:v:w) \in \ff$ for which $uvw=0$.

Note that in the $\fq$-Frobenius nonclassical case we have
$
\nu_1 +\cdots+\nu_{M-1}>M(M-1)/2.
$
Thus  bound \eqref{SV-sc} may not hold. This suggests that  such curves are  likely to have many $\fq$-rational points.

Characterizing  $\fq$-Frobenius nonclassical  curves  may offer a two-fold benefit. If we can identify the  $\fq$-Frobenius nonclassical curves, then we are left with a class of curves for which  a better upper bound (inequality \eqref{SV-sc}) for the number of $\fq$-rational points holds. At the same time, the $\fq$-Frobenius nonclassical curves provide a potential source of curves with many such points.

In 1988, Garcia and Voloch characterized the  $\fq$-Frobenius nonclassical  Fermat curves with respect to  $\Sigma_s$, in the cases $s=1$ and $s=2$ \cite{GV2}. It is not hard to check that if $n$ is small with respect to $q$, then bound \eqref{SV-sc}  becomes stronger for larger values of $s$. Thus the characterization of the $\fq$-Frobenius nonclassicality for other values of $s$ is highly desirable, and it is naturally challenging. 

In this manuscript, we establish the result for $s=3$. That is,  we  characterize  the $\fq$-Frobenius nonclassical Fermat curves with respect to the linear systems of plane cubics. Our main result is the following:
 \begin{thm}\label{frobenius}
Let $\ff:aX^{n}+bY^{n}=Z^{n}$ be an irreducible Fermat curve defined over $\fq$, where $q=p^h$, $p>11$, and $n>3$. Suppose that $\ff$ is $\fq$-Frobenius classical with respect to $\Sigma_2$. Then the curve $\ff$ is $\fq$-Frobenius nonclassical with respect to $\Sigma_3$ if and only if one of the following holds:
\begin{enumerate}[\rm(i)]

\item $p|n-3$ and 
$
n=\frac{3(p^h-1)}{p^r-1}
$
for some $r<h$ such that  $r|h$ and $a,b \in \F_{p^r}$.
\item $p|3n-1$ and $n=\frac{p^h-1}{3(p^r-1)}$ for some $r<h$ such that  $r|h$ and $a^3,b^3 \in \F_{p^r}$.
\end{enumerate}
\end{thm}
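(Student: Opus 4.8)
The plan is to reduce $\fq$-Frobenius nonclassicality with respect to $\Sigma_3$ to the identical vanishing of a single Hasse--Wronskian determinant, and then to extract from that vanishing the arithmetic of $n$, $a$ and $b$. The first step is to pin down the (classical) $\Sigma_3$-order sequence $(\epsilon_0,\dots,\epsilon_9)$ of $\ff$. Since here $M=\binom{5}{2}-1=9$, the point is to show that under the standing hypotheses --- $p>11$, $n>3$, and $\fq$-Frobenius classicality with respect to $\Sigma_2$ --- this sequence is the generic one $(0,1,\dots,9)$. The $\Sigma_2$-classicality forces the conic part of the osculating flag to be nondegenerate, so the only orders that could jump are the three ``new'' ones coming from the degree-$3$ monomials; the bound $p>11$ guarantees that the factorials and binomial coefficients entering the relevant minors are units modulo $p$, so no jump occurs. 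With $(\epsilon_i)=(0,\dots,9)$ fixed, Stöhr--Voloch theory yields that $\ff$ is $\fq$-Frobenius nonclassical with respect to $\Sigma_3$ precisely when $\nu_{8}<8$, that is, when the Frobenius image $P^{(q)}=(x^q:y^q:z^q)$ of a generic point $P\in\ff$ lies on the osculating cubic $\mathcal{C}_P$ at $P$; equivalently, the $10\times 10$ determinant $W$ whose first row is the Frobenius row $(g_0^q,\dots,g_9^q)$ and whose remaining rows are the Hasse derivatives $D^{(k)}g_i$ ($0\le k\le 8$) of the ten cubic monomials $g_0,\dots,g_9$ vanishes identically on $\ff$.

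Next I would carry out the determinant computation in the affine chart $Z=1$, taking the coordinate $x=X/Z$ as a separating variable. Using $D^{(k)}x^i=\binom{i}{k}x^{i-k}$ together with the relation $ax^n+by^n=1$ (whence $y'=-ax^{n-1}/(by^{n-1})$ and its higher Hasse analogues), every entry becomes an explicit Laurent monomial in $x,y$. The classical Wronskian of $(g_0,\dots,g_9)$ --- the $9\times9$ minor built from the derivative rows alone --- is nonzero by the order computation above, so it may be factored out of $W$; what remains, after rehomogenizing, is a single polynomial identity
\[
\Phi(x,y,z):=\mathcal{C}_{(x:y:z)}(x^q,y^q,z^q)\equiv 0\quad\text{on }\ff .
\]
The Frobenius entries produce the key simplification: since $\binom{q}{k}\equiv 0\pmod p$ for $0<k<q$, the row $(g_i^q)$ behaves like a derivative of very high order, so $\Phi$ is linear in the Frobenius coordinates $x^q,y^q,z^q$, with coefficients that are twisted cubic monomials in $x,y,z$ weighted by Hasse derivatives of the Fermat form $aX^n+bY^n-Z^n$.

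The core of the argument is then to determine when $\Phi\equiv0$ on $\ff$, i.e.\ when $aX^n+bY^n-Z^n\mid\Phi$. Matching exponents forces $x^q$, modulo the curve relation, to be proportional to a single cubic monomial times a $p^r$-th power of a linear combination of $x^n,y^n,z^n$, and this is exactly where the two regimes split. One alignment pairs the Frobenius with the ``$+3$'' shift carried by the cubes $X^3,Y^3,Z^3$, which forces $n(p^r-1)=3(q-1)$; the surviving binomial factor imposes the congruence $p\mid n-3$, and absorbing the Frobenius twist of the coefficients requires $a,b\in\F_{p^r}$ --- this is case (i). The dual alignment forces $3n(p^r-1)=q-1$ and the congruence $p\mid 3n-1$, the cubes now entering jointly so that only $a^3,b^3\in\F_{p^r}$ is needed to kill the twist --- this is case (ii). Conversely, I would check directly that each of (i) and (ii) makes $\Phi\equiv0$, so the conditions are also sufficient.

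The hard part is the explicit factorization of $W$ and the clean separation of the two arithmetic cases: one must show that $\Phi$ genuinely reduces to the divisibility $aX^n+bY^n-Z^n\mid\Phi$ and that this divisibility has no solutions other than (i) and (ii). The bookkeeping of which binomial coefficients survive modulo $p$ (governed by $p>11$ and by the congruences $p\mid n-3$ and $p\mid 3n-1$), together with the verification that the subfield hypotheses on $a,b$ (respectively $a^3,b^3$) are both necessary and sufficient, is where essentially all the work lies; the $\Sigma_2$-classicality hypothesis is precisely what rules out spurious degenerations from lower-degree osculating data leaking into this computation.
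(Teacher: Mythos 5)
Your opening step is false, and the error is load-bearing. You claim that under the standing hypotheses the $\Sigma_3$-order sequence of $\ff$ is the generic one $(0,1,\dots,9)$. But for $p>M=9$, $\fq$-Frobenius nonclassicality with respect to $\Sigma_3$ \emph{implies} geometric nonclassicality (Proposition \ref{fnc impl nc}); so if the order sequence were $(0,\dots,9)$, every curve satisfying your hypotheses would automatically be $\fq$-Frobenius classical with respect to $\Sigma_3$, and the theorem would be vacuous --- contradicting the fact that cases (i) and (ii) do occur (see the examples in Section 5). What is actually true, and what the paper proves (Lemma \ref{ospart}), is that in the relevant cases $p\mid (n-3)(3n-1)$ the order sequence is $(0,1,\dots,8,p^r)$: only $\varepsilon_0,\dots,\varepsilon_8$ are generic, and $\varepsilon_9$ is a power of $p$. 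Your working criterion --- Frobenius nonclassicality iff $\Phi_q(P)\in\hh_P$ for generic $P$ (Proposition \ref{prop1}) --- only needs $\varepsilon_i=i$ for $i\le 8$, so it survives; but even this weaker statement is not a matter of ``binomial coefficients being units mod $p$.'' The paper obtains $\varepsilon_8=8$ by showing the osculating cubic is \emph{irreducible} (Proposition \ref{p div (n-3)(3n-1)}) and combining the Homma--Kim lemma (Lemma \ref{trans}) with B\'ezout; nothing in your sketch substitutes for that.

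The second, more serious, gap is that your plan is circular where the real work lies. To run the ``matching exponents'' analysis of $\Phi(x,y,z)=\mathcal{C}_{(x:y:z)}(x^q,y^q,z^q)$ you must have the osculating cubic $\mathcal{C}_P$ in explicit form, but its explicit form is only obtained \emph{after} one knows which congruence, $p\mid n-3$ or $p\mid 3n-1$, holds: Proposition \ref{p div (n-3)(3n-1)} produces $\hh_P$ precisely by rewriting the Fermat relation using that congruence. The paper breaks the circle by first settling the geometric question (Theorem \ref{classicalidade}): Frobenius nonclassicality forces geometric nonclassicality, hence $p\mid (n-2)(n-1)(n+1)(2n-1)(n-3)(3n-1)$, and the hard content there is Lemma \ref{classic}, which rules out $p\mid(n+2)(2n+1)(2n-3)(3n-2)$ via auxiliary curves $\G_1,\dots,\G_4$, their irreducibility (appendix), and again Lemma \ref{trans}. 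Your proposal never addresses this necessity step at all --- under your (false) generic-order-sequence claim there appeared to be nothing to rule out, but once that claim is withdrawn the gap reopens. Finally, the extraction of the arithmetic conditions is genuinely nontrivial and only gestured at in your text: in case (i) the paper converts the identity $ax^{n-3+3q}+by^{n-3+3q}-1\equiv 0$ on $\ff$ into a polynomial divisibility and applies the Fried--MacRae lemma (Lemma \ref{pol}) to force $3(q-1)/n+1=p^r$ and $a,b\in\F_{p^r}$; in case (ii) a substitution and a separability argument force $3n\mid q-1$, $1+(q-1)/(3n)=p^r$, and $a^3,b^3\in\F_{p^r}$. ``Matching exponents,'' as you describe it, is not yet an argument for either of these.
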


Some   techniques applied here can be carried over to larger values of $s$, and thereby shed some light on  the solution of this problem for 
the general linear system $\Sigma_s$.
   
The paper is organized as follows. Section 2 sets some  notation and recalls  results from the St\"ohr-Voloch Theory. Section 3 provides necessary and sufficient conditions for the curve $\ff$ to be nonclassical with respect to $\Sigma_3$. In particular, it answers a question raised by Garcia and Voloch in \cite{GV2}.   Section 4 presents a sequence of  additional   results culminating in  the proof of Theorem \ref{frobenius}.  Finally, Section 5 determines  the number of  $\fq$-rational points on  curves given by Theorem \ref{frobenius} and provides some examples. In the paper's appendix,  we  prove the irredubility of some   low-degree curves, and include a proof  for a case of Frobenius nonclassicality w.r.t. $\Sigma_2$ which was apparently  overlooked in  \cite{GV2}. An unpublished but very useful result, due to M. Homma
and S. J. Kim,  is also included in the appendix.

\text{}\\

\textbf{Notation}

\text{}\\
Hereafter, we  use the following notation: 

\begin{itemize}
\item $\fq$ is the finite field with $q=p^h$ elements, with $h \geq 1$, for a prime integer $p$.
\item $\fqc$ is the algebraic closure of $\fq$.
\item Given an irreducible curve $\cc$ over $\fq$ and an algebraic extension $\mathbb{H}$ of $\fq$, the function field of $\cc$ over $\mathbb{H}$ is denoted by $\mathbb{H}(\cc)$.
\item For a curve $\cc$ and $r>0$, the set of its $\F_{q^r}$-rational points is denoted by $\cc(\F_{q^r})$.
\item $N_{q^r}(\cc)$ is the number of $\F_{q^r}$-rational points of the curve $\cc$.
\item For a nonsingular point $P \in \cc$, the discrete valuation at $P$ is denoted by $v_P$.
\item For two plane curves $\cc_1$ and $\cc_2$ , the intersection multiplicity of $\cc_1$ and $\cc_2$  at the point $P$ is denoted by $I(P,\cc_1 \cap \cc_2)$.
\item Given $g \in \fqc(\cc)$, $t$ a separating variable of $\fqc(\cc)$, and $r \geq 0$, the $r$-th Hasse derivative of $g$ with respect to $t$ is denoted by  $D_t^{(r)}g$.
\end{itemize}


\section{Preliminaries}\label{prelim}

Let us start  by recalling  the main results of \cite{GV2} and \cite{SV}. For $n>3$, consider an irreducible  Fermat curve
\begin{equation}
\ff:aX^n+bY^n=Z^n
\end{equation}
 defined over $\fq$.
 For each $s \in \{1,\ldots,n-1\}$, denote by $\Sigma_s$ the linear system of all projective plane curves of degree $s$. For any point  $P \in \ff$,  an integer $j:=j(P)$ is called a $(\Sigma_s,P)$-order if there exists a plane curve of degree $s$, say $\mathcal{C}_P$, such that $I(P,\ff \cap \mathcal{C}_P)=j$.  From the discussion  in \cite[Section 1]{SV}, it follows that there exist exactly $M+1$ $(\Sigma_s,P)$-orders
$$
j_0(P)<j_1(P)< \cdots <j_M(P),
$$
where $M={s+2 \choose 2}-1$. The sequence $(j_0(P),j_1(P), \ldots ,j_M(P))$ is called $(\Sigma_s,P)$-order sequence.
 Note that  $j_0(P)=0$ and  $j_1(P)=1$ for all $P \in \ff $. 
 Moreover,  there exists a unique curve $\hh_P$ of degree $s$, called $s$-osculating curve of $\ff$ at $P$, such that $I(P,\ff \cap \hh_P)=j_M(P)$ \cite[Theorem 1.1]{SV}. All but finitely many points of $\ff$ have the same order sequence, denoted by $(\varepsilon_0,\ldots,\varepsilon_M)$. This sequence is called order sequence of $\ff$ with respect to $\Sigma_s$, and  the integers $\varepsilon_i$ are called $\Sigma_s$-orders.

Let $\fqc(\ff)=\fqc(x,y)$ be the function field of $\ff$, defined by   $ax^n+by^n=1$. To each linear series $\Sigma_s$, there corresponds a morphism
\begin{equation}\label{veronese}
\phi_s=(\ldots:x^iy^j:\ldots):\ff \lra \p^M(\fqc),
\end{equation}
where $i+j \leq s$, called the $s$-Veronese morphism. Let  $t$ be a separating variable of $\fqc(\ff)$ and $D_t^{(i)}$ denote the $i$-th Hasse derivative with respect to  $t$. The $\Sigma_s$-orders of $\ff$  can also be defined as  the minimal sequence with respect to the lexicographic order, for which the function
\begin{equation}\nonumber
\det\left( D_t^{(\varepsilon_k)} (x^iy^j) \right)_{\tiny{\begin{array}{c}0 \leq k \leq M, \\[0.1mm]
0 \leq i+j \leq s \end{array}}}
\end{equation}
is nonvanishing. Moreover, this minimality implies that
$\varepsilon_i \leq j_i(P)$  for all $i \in \{1, \ldots ,M \}$  and $P \in \ff$. The curve $\ff$ is called classical with respect to $\Sigma_s$ (or $\phi_s$) if the sequence $(\varepsilon_0,\ldots,\varepsilon_M)$ is $(0,\ldots,M)$. Otherwise, it is called nonclassical. 

The following result concerning $\Sigma_s$-orders is proved  in \cite[Corollary 1.9]{SV}.

\begin{thm}\label{cor1.9}
Let $\varepsilon$ be a $\Sigma_s$-order. Then every integer $\mu$ such that 
$$
{\varepsilon \choose \mu } \not\equiv 0  \mod p
$$
is also a $\Sigma_s$-order. In particular, if $\varepsilon<p$, then $0,1,\ldots,\varepsilon-1$ are $\Sigma_s$-orders. 
\end{thm}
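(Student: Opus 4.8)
The plan is to work with the intrinsic, derivative-theoretic description of the orders that underlies the determinant condition recalled above. Write $\mathbf{z}=(\ldots:x^iy^j:\ldots)$, $i+j\le s$, for the vector of coordinate functions of $\phi_s$, and for a separating variable $t$ let $D_t^{(i)}\mathbf{z}$ denote the coordinatewise $i$-th Hasse derivative. The lexicographic minimality of the sequence $(\varepsilon_0,\dots,\varepsilon_M)$ in the nonvanishing of $\det\big(D_t^{(\varepsilon_k)}(x^iy^j)\big)$ translates into the statement that a nonnegative integer $\varepsilon$ is a $\Sigma_s$-order if and only if $D_t^{(\varepsilon)}\mathbf{z}$ is linearly independent, over $\fqc(\ff)$, from the vectors $D_t^{(i)}\mathbf{z}$ with $0\le i<\varepsilon$. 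Indeed, minimality forces $\langle D_t^{(i)}\mathbf{z}: i<\varepsilon\rangle$ to coincide with the span of the $D_t^{(\varepsilon_l)}\mathbf{z}$ for the orders $\varepsilon_l<\varepsilon$, so $\varepsilon$ enters the order sequence exactly when it contributes a new dimension. I would take this reformulation as the working definition of ``order'' throughout.

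Two classical identities for Hasse derivatives drive the argument: the composition rule $D_t^{(a)}D_t^{(b)}=\binom{a+b}{a}D_t^{(a+b)}$ and the Leibniz rule $D_t^{(m)}(fg)=\sum_{k=0}^m (D_t^{(k)}f)(D_t^{(m-k)}g)$, the latter applied coordinatewise with $f\in\fqc(\ff)$. Since $\binom{\varepsilon}{\mu}=0$ for $\mu>\varepsilon$ and the case $\mu=\varepsilon$ is trivial, it suffices to prove: if $\varepsilon$ is an order, $\mu<\varepsilon$, and $\binom{\varepsilon}{\mu}\not\equiv 0\pmod p$, then $\mu$ is also an order. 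I would argue by contraposition.

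So suppose $\mu<\varepsilon$ is \emph{not} an order. By the reformulation there are functions $g_i\in\fqc(\ff)$ with $D_t^{(\mu)}\mathbf{z}=\sum_{i<\mu} g_i\,D_t^{(i)}\mathbf{z}$. Applying $D_t^{(\varepsilon-\mu)}$ and using the composition rule on the left produces exactly $\binom{\varepsilon}{\mu}D_t^{(\varepsilon)}\mathbf{z}$, while the Leibniz and composition rules expand the right-hand side as a $\fqc(\ff)$-linear combination of the vectors $D_t^{(\varepsilon-\mu-k+i)}\mathbf{z}$ with $0\le k\le \varepsilon-\mu$ and $i<\mu$. The key point, and the one requiring care, is the index bookkeeping: every exponent appearing satisfies $\varepsilon-\mu-k+i\le \varepsilon-\mu+(\mu-1)=\varepsilon-1<\varepsilon$. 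Hence $\binom{\varepsilon}{\mu}D_t^{(\varepsilon)}\mathbf{z}$ lies in $\langle D_t^{(j)}\mathbf{z}: j<\varepsilon\rangle$, and since $\binom{\varepsilon}{\mu}$ is a nonzero element of $\fqc$ it is invertible, so $D_t^{(\varepsilon)}\mathbf{z}$ lies there too. This contradicts the assumption that $\varepsilon$ is an order, completing the contrapositive. I expect this index estimate, together with the clean use of the composition rule to generate precisely the factor $\binom{\varepsilon}{\mu}$, to be the crux of the proof; the only genuine subtlety is keeping track that the $g_i$ are functions rather than constants, which is why the Leibniz rule must be invoked.

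Finally, the ``in particular'' statement is immediate: if $\varepsilon<p$ then $\varepsilon!$ is prime to $p$, so $\binom{\varepsilon}{\mu}=\varepsilon!/\big(\mu!\,(\varepsilon-\mu)!\big)\not\equiv 0\pmod p$ for every $\mu$ with $0\le\mu\le\varepsilon$; applying the main statement to each such $\mu$ shows that $0,1,\dots,\varepsilon-1$ are all $\Sigma_s$-orders.
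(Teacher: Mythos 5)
Your proof is correct, and it is essentially the standard argument: the paper itself gives no proof of this statement (it quotes it as \cite[Corollary 1.9]{SV}), and your derivation---characterizing the orders via linear independence of Hasse derivatives, then applying $D_t^{(\varepsilon-\mu)}$ with the composition rule $D_t^{(a)}D_t^{(b)}=\binom{a+b}{a}D_t^{(a+b)}$ and the Leibniz rule to show that if $\mu$ fails to be an order then so does $\varepsilon$---is precisely the argument of St\"ohr--Voloch. The index estimate $\varepsilon-\mu-k+i\le\varepsilon-1$ and the invertibility of $\binom{\varepsilon}{\mu}$ in $\fqc$ are exactly the points that make it work, and your handling of both is sound.
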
 

The following is a significant  criterion for determining whether $\ff$ is classical (see \cite[Proposition 1.7]{SV}).

\begin{prop}\label{SV1.7}
Let $P \in \ff$ be a point with order sequence $(j_0(P),\ldots,j_M(P))$. If the integer 
$$\prod_{i>r}\frac{j_i(P)-j_r(P)}{i-r}$$ is not divisible by $p$, then $\ff$ is classical with respect to $\Sigma_s$.
\end{prop}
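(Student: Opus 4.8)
The plan is to reduce classicality to the non-vanishing of a single Hasse-Wronskian determinant and then extract its leading coefficient at the point $P$. By the characterization of the $\Sigma_s$-orders recalled just above (the order sequence $(\varepsilon_0,\ldots,\varepsilon_M)$ is the lexicographically minimal sequence making $\det\bigl(D_t^{(\varepsilon_k)}(x^iy^j)\bigr)$ nonvanishing), the curve $\ff$ is classical with respect to $\Sigma_s$ precisely when the determinant
\[
W=\det\Bigl(D_t^{(k)}(x^iy^j)\Bigr)_{0\le k\le M,\ 0\le i+j\le s}
\]
formed with the derivative indices $k=0,1,\ldots,M$ does not vanish identically on $\ff$; indeed $(0,1,\ldots,M)$ is the lex-smallest strictly increasing sequence starting at $0$, so $W\not\equiv 0$ forces $\varepsilon_k=k$. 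Thus it suffices to show that the hypothesis at $P$ implies $W\not\equiv 0$.

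First I would work locally at $P$ with a local parameter $t$. Since the $(\Sigma_s,P)$-orders are exactly $j_0(P)<\cdots<j_M(P)$, I can choose a basis $g_0,\ldots,g_M$ of the space spanned by the monomials $x^iy^j$ ($i+j\le s$) with $v_P(g_\ell)=j_\ell(P)$. The Wronskian built from the $g_\ell$ differs from $W$ by the (nonzero) determinant of the base-change matrix, so it is equivalent to prove that this modified Wronskian is nonvanishing. Writing $g_\ell=c_\ell\,t^{j_\ell(P)}+(\text{higher order})$ with $c_\ell\neq 0$ and using the Hasse-derivative power rule $D_t^{(k)}t^{m}=\binom{m}{k}t^{m-k}$, I get $D_t^{(k)}g_\ell=c_\ell\binom{j_\ell(P)}{k}t^{\,j_\ell(P)-k}+\cdots$. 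The key observation is that in the determinant expansion every permutation term carries the same $t$-order $\sum_\ell j_\ell(P)-\binom{M+1}{2}$, so the lowest-degree coefficient of the Wronskian is
\[
\Bigl(\textstyle\prod_\ell c_\ell\Bigr)\cdot\det\Bigl(\tbinom{j_\ell(P)}{k}\Bigr)_{0\le k,\ell\le M}.
\]
Since $\prod_\ell c_\ell\neq 0$, the Wronskian is nonvanishing if and only if this binomial determinant is nonzero in $\fqc$, i.e.\ $\not\equiv 0\pmod p$.

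It then remains to evaluate the binomial determinant. Because $\binom{X}{k}$ is a polynomial in $X$ of degree $k$ with leading coefficient $1/k!$, triangular row operations reduce $\det\bigl(\binom{j_\ell(P)}{k}\bigr)$ to $\bigl(\prod_k 1/k!\bigr)$ times the Vandermonde determinant $\det\bigl(j_\ell(P)^k\bigr)=\prod_{\ell>r}(j_\ell(P)-j_r(P))$. Combining this with the identity $\prod_{\ell>r}(\ell-r)=\prod_k k!$, the determinant equals exactly $\prod_{\ell>r}\frac{j_\ell(P)-j_r(P)}{\ell-r}$; in particular this product is an integer (being the determinant of an integer matrix), and by hypothesis it is nonzero modulo $p$. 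Hence $W\not\equiv 0$ and $\ff$ is classical with respect to $\Sigma_s$.

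I expect the main obstacle to be the bookkeeping in the middle step: justifying the point-adapted basis change from the definition of the order sequence, and above all verifying that all $(M+1)!$ permutation terms share the same minimal $t$-power so that no cancellation hides the leading coefficient. Once that is in place, the reduction of the binomial determinant to the stated product is a routine classical computation, and the $p$-integrality of $\prod_{\ell>r}\frac{j_\ell-j_r}{\ell-r}$ comes for free from the determinant interpretation.
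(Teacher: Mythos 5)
Your proof is correct, but note that the paper itself contains no proof of this proposition: it is quoted directly from \cite[Proposition 1.7]{SV}, so there is no internal argument to compare against. What you have written is essentially the standard St\"ohr--Voloch argument from that source: reduce classicality to the nonvanishing of the Wronskian $\det\bigl(D_t^{(k)}(x^iy^j)\bigr)$ via lexicographic minimality, pass to a point-adapted basis $g_\ell$ with $v_P(g_\ell)=j_\ell(P)$, read off the lowest-order coefficient of the Wronskian at $P$ as $\bigl(\prod_\ell c_\ell\bigr)\det\bigl(\binom{j_\ell(P)}{k}\bigr)$, and evaluate the binomial determinant over $\mathbb{Z}$ as $\prod_{i>r}\frac{j_i(P)-j_r(P)}{i-r}$ before reducing mod $p$. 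Two small points to tighten: (i) your claim that the Wronskian is nonvanishing \emph{if and only if} the binomial determinant is nonzero mod $p$ is an overstatement --- the Wronskian can be a nonzero function even when this particular coefficient vanishes (it then just vanishes to higher order at $P$) --- but your argument only uses the correct ``if'' direction, so nothing breaks; (ii) you should remark that a local parameter at a nonsingular point of $\ff$ is a separating variable of $\fqc(\ff)$ (and that $\ff$, being irreducible, has $p\nmid n$ and is nonsingular), so that the Wronskian criterion may legitimately be applied with your chosen $t$.
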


The following characterization of the order sequences is given in \cite[Proposition 2]{GV1}.

\begin{prop}\label{pwr}
Let $\varepsilon_0<\varepsilon_1<\cdots<\varepsilon_M$ be the orders of $\ff$ with respect to $\Sigma_s$. Suppose  $p \geq M$ and $\varepsilon_i=i$ for $i=0,1,\ldots,M-1$. If $\varepsilon_M>M$, then $\varepsilon_n$ is a power of $p$.
\end{prop}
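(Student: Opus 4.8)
The plan is to apply the divisibility criterion of Theorem~\ref{cor1.9} to the top order $\varepsilon_M$ and to exploit the fact that, under the hypotheses, the complete list of $\Sigma_s$-orders is the $(M+1)$-element set $\{0,1,\ldots,M-1,\varepsilon_M\}$. The guiding principle is that Theorem~\ref{cor1.9} forces every integer $\mu$ with $\binom{\varepsilon_M}{\mu}\not\equiv 0 \pmod p$ to belong to this list; I would then show that the base-$p$ expansion of $\varepsilon_M$ is so constrained by this requirement that $\varepsilon_M$ must consist of a single power of $p$.

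First I would dispose of the case $\varepsilon_M<p$. Here the ``in particular'' clause of Theorem~\ref{cor1.9} says that $0,1,\ldots,\varepsilon_M-1$ are all $\Sigma_s$-orders; together with $\varepsilon_M$ itself this produces $\varepsilon_M+1$ distinct orders, all at most $\varepsilon_M$. Since there are exactly $M+1$ orders, this gives $\varepsilon_M+1\le M+1$, that is $\varepsilon_M\le M$, contradicting the hypothesis $\varepsilon_M>M$. Hence $\varepsilon_M\ge p$.

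For the main step, write the base-$p$ expansion $\varepsilon_M=\sum_{k=0}^{m}a_k p^k$ with $a_m\ne 0$, so that $p^m\le \varepsilon_M$ and, because $\varepsilon_M\ge p$, we have $m\ge 1$. Testing $\mu=p^m$, a standard computation with binomial coefficients modulo $p$ (Lucas' theorem) gives $\binom{\varepsilon_M}{p^m}\equiv a_m\not\equiv 0\pmod p$, since the only nonzero digit of $p^m$ is a $1$ in position $m$, where $\varepsilon_M$ has the digit $a_m$ with $1\le a_m\le p-1$. By Theorem~\ref{cor1.9}, $p^m$ is therefore a $\Sigma_s$-order, so $p^m\in\{0,1,\ldots,M-1,\varepsilon_M\}$. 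But $p^m\ge p\ge M$, so $p^m$ cannot equal any of $0,1,\ldots,M-1$; the only remaining possibility is $p^m=\varepsilon_M$, which is precisely the assertion that $\varepsilon_M$ is a power of $p$.

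The main conceptual point, and the only real obstacle, is choosing the right test value to feed into Theorem~\ref{cor1.9}: once one tries $\mu=p^m$, the top power of $p$ not exceeding $\varepsilon_M$, Lucas' theorem makes the binomial coefficient a unit automatically, and the hypothesis $p\ge M$ is exactly what prevents this order from collapsing into the ``small'' block $\{0,\ldots,M-1\}$. The preliminary reduction $\varepsilon_M\ge p$, which guarantees $m\ge 1$ so that $p^m$ is genuinely large, is the one spot requiring care, but it follows cleanly from the counting argument above.
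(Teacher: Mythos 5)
Your proof is correct, but there is nothing in the paper to compare it against: the paper does not prove Proposition \ref{pwr} at all, it simply quotes the result from \cite[Proposition 2]{GV1}. Judged on its own merits, your argument is complete. The preliminary reduction is right: if $\varepsilon_M<p$, the last clause of Theorem \ref{cor1.9} makes $0,1,\ldots,\varepsilon_M-1,\varepsilon_M$ into $\varepsilon_M+1$ distinct $\Sigma_s$-orders, and since there are exactly $M+1$ orders this forces $\varepsilon_M\le M$, contradicting $\varepsilon_M>M$; hence $\varepsilon_M\ge p$. The main step is also right: with $\varepsilon_M=\sum_{k=0}^{m}a_kp^k$, $a_m\neq 0$, $m\ge 1$, Lucas' theorem gives $\binom{\varepsilon_M}{p^m}\equiv a_m\not\equiv 0 \pmod p$, so $p^m$ is a $\Sigma_s$-order by Theorem \ref{cor1.9}; under the hypothesis the full set of orders is $\{0,1,\ldots,M-1,\varepsilon_M\}$, and $p^m\ge p\ge M$ rules out the first block, leaving $p^m=\varepsilon_M$. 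This is the standard $p$-adic digit argument for results of this type (and, in substance, the route taken in \cite{GV1} itself), so you have in effect supplied the missing proof rather than deviated from the paper; the only cosmetic remark is that the proposition's conclusion should read $\varepsilon_M$, not $\varepsilon_n$, a typo in the paper's statement that your proof silently and correctly repairs.
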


In \cite[Theorem 3]{GV1}, Garcia and Voloch gave  a complete characterization of the nonclassical Fermat curves with respect to conics:

\begin{thm}\label{gv nc}
Suppose $p>5$. The Fermat curve $\ff$ is nonclassical with respect to $\Sigma_2$ if and only if
$$
p \text{ divides }(n-2)(n-1)(n+1)(2n-1).
$$
\end{thm}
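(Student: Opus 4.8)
The plan is to read off the classicality criterion directly from the determinant that defines the $\Sigma_2$-orders. Here $M=\binom{4}{2}-1=5$ and $\phi_2=(1:x:y:x^2:xy:y^2)$ on $\fqc(\ff)=\fqc(x,y)$ with $ax^n+by^n=1$. By the definition recalled in Section~\ref{prelim}, the order sequence is the lexicographically minimal strictly increasing sequence for which the Hasse-Wronskian is nonzero; since $(0,1,2,3,4,5)$ is the smallest such sequence, $\ff$ is classical with respect to $\Sigma_2$ precisely when $W:=\det\!\big(D_x^{(k)}(x^iy^j)\big)_{0\le k\le5,\ 0\le i+j\le2}$ does not vanish identically. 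The irreducibility of $\ff$ forces $p\nmid n$ (otherwise, over the perfect field $\fqc$, $aX^n+bY^n-Z^n$ is a $p$-th power, so the curve is non-reduced), hence $x$ is a separating variable. As $p>5$ exceeds every order used, each Hasse derivative satisfies $D_x^{(k)}=\tfrac1{k!}\,\tfrac{d^k}{dx^k}$, so $W$ equals the ordinary Wronskian up to the nonzero scalar $\prod_{k=0}^5 (k!)^{-1}$, and I would compute the latter.

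First I would exploit the block structure: the columns of $1,x,x^2$ give an upper-triangular $3\times3$ block whose lower-left companion block is zero, so $W$ collapses to a $3\times3$ determinant in the derivatives of $y$. After elementary column operations that strip off the $x$- and $y$-dependence, I expect to reach
\[
W \;\doteq\; y''\cdot\big(40\,(y''')^3-45\,y''\,y'''\,y^{(4)}+9\,(y'')^2\,y^{(5)}\big),
\]
where $\doteq$ means equality up to a nonzero constant. Since $y''\not\equiv 0$ for $n>3$, classicality is equivalent to the non-vanishing of $\Delta:=40(y''')^3-45\,y''y'''y^{(4)}+9(y'')^2y^{(5)}$. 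The next step is to make $\Delta$ explicit: differentiating $ax^n+by^n=1$ gives $y'/y=-ax^{n-1}/(1-ax^n)$, a rational function of $x$ alone, so every $y^{(k)}$ equals $y$ times an explicit rational function $B_k(x)$ (a complete Bell polynomial in the logarithmic derivatives of $y$). Substituting and cancelling the common powers of $x$ and $y$ converts $\Delta\equiv0$ into the vanishing of a single polynomial $F(n,T)$ in the variable $T=ax^n$, with coefficients in $\F_p[n]$; working with the operator $\delta=x\,d/dx$ (so $\delta T=nT$) keeps this reduction mechanical. Thus $\ff$ is classical with respect to $\Sigma_2$ if and only if $F(n,T)\not\equiv 0$.

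The hard part will be the explicit factorization showing that $F(n,T)\equiv 0$ in $\F_p[T]$ exactly when $p\mid(n-2)(n-1)(n+1)(2n-1)$. I would establish two computational facts about the $T$-coefficients $\psi_j(n)$ of $F$: that each $\psi_j$ is divisible by $g(n):=(n-2)(n-1)(n+1)(2n-1)$ (over $\mathbb{Z}[1/2]$), and that the cofactors $\rho_j:=\psi_j/g$ have no common root in $\overline{\F}_q$ for $p>5$. Granting these, the equivalence is immediate: $F(n_0,\cdot)\equiv 0$ means every $\psi_j(n_0)=0$; if $g(n_0)=0$ this holds automatically, while if $g(n_0)\ne 0$ then some $\rho_j(n_0)\ne0$, whence $\psi_j(n_0)=g(n_0)\rho_j(n_0)\ne 0$. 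Concretely, the reverse implication is checked by substituting each of the four residues $n\equiv 2,1,-1,2^{-1}\pmod p$ and verifying directly that all coefficients of $F$ vanish, and the forward implication follows from coprimality of the cofactors. The role of the hypothesis $p>5$ is twofold: it makes the Hasse-to-ordinary comparison valid ($p\nmid 5!$), and it rules out the accidental collapse at $p=5$, where the constants $40$ and $45$ in $\Delta$ become divisible by $5$ and the whole structure degenerates.

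I expect the genuine obstacle to be purely computational rather than conceptual: carrying the Bell-polynomial expansion of $\Delta$ through to an explicit $F(n,T)$ and then verifying the divisibility by $g(n)$ together with the coprimality of the cofactors. Everything else—the block reduction of the Wronskian, the rationality of $y'/y$, and the final logical equivalence—is routine once $F$ is in hand, and the statement of the theorem is exactly the contrapositive of $\big(\text{classical}\Leftrightarrow F\not\equiv0\Leftrightarrow p\nmid(n-2)(n-1)(n+1)(2n-1)\big)$.
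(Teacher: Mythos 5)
First, a point of comparison that matters here: the paper itself offers no proof of this statement --- Theorem~\ref{gv nc} is imported verbatim from Garcia--Voloch \cite[Theorem 3]{GV1} --- so your proposal can only be measured against the original Wronskian computation, which it essentially reconstructs. Your scaffolding is correct, and I can confirm it: irreducibility forces $p\nmid n$, so $x$ is separating; for $p>5$ classicality w.r.t.\ $\Sigma_2$ is equivalent to the nonvanishing of the $6\times 6$ Hasse--Wronskian with orders $0,\dots,5$; the block reduction along the columns $1,x,x^2$ is valid; and your predicted identity $W\doteq y''\bigl(40(y''')^3-45\,y''y'''y^{(4)}+9\,(y'')^2y^{(5)}\bigr)$ is exactly right. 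Moreover, the computation you postpone does close the way you expect: using $y'=-\tfrac{a}{b}x^{n-1}y^{1-n}$ and the relation $ax^n+by^n=1$ repeatedly, one finds, with $T=ax^n$,
\begin{equation*}
W\;\doteq\;(n-1)^4(n-2)(n+1)(2n-1)\;x^{4n-11}\,y^{4-11n}\,(T+1)(2T-1)(T-2),
\end{equation*}
and since $T$ is a nonconstant function on $\ff$ and the coefficients $2,-3$ of $(T+1)(2T-1)(T-2)=2T^3-3T^2-3T+2$ are units for $p>5$, the identical vanishing of $W$ is equivalent to $p\mid (n-1)(n-2)(n+1)(2n-1)$, which is the theorem.

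That said, the proposal as written has three defects. The structural one: everything you label ``the hard part'' --- the explicit form of $F(n,T)$ and the divisibility of its coefficients --- \emph{is} the proof; what you have produced is a correct roadmap with the decisive verification left as a promissory note. Two sharper points: (i) your claim that $y''\not\equiv 0$ for $n>3$ is false in characteristic $p$: one computes $y''=-a(n-1)b^{-2}x^{n-2}y^{1-2n}$, so $y''\equiv 0$ exactly when $p\mid n-1$ (the $\Sigma_1$-nonclassical case, which your theorem must also cover). Your equivalence ``classical $\Leftrightarrow \Delta\not\equiv 0$'' survives only because $y''\equiv 0$ forces $y'''\equiv 0$ and hence $\Delta\equiv 0$ as well; you must either make that observation explicit or split off $p\mid n-1$ as a separate, trivially nonclassical, case. (ii) Your coprimality prediction is false: by the display above, the $T$-coefficients are $\psi_j=(n-1)^4(n-2)(n+1)(2n-1)c_j$ with $c_j\in\{2,-3\}$ up to sign, so the cofactors $\rho_j=\psi_j/g$ all share the root $n\equiv 1$. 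The final logical step still goes through --- every common root of the $\rho_j$ is already a root of $g$ --- but the lemma you proposed to prove (``the $\rho_j$ have no common root in $\overline{\F}_q$'') is not true and must be replaced by that weaker statement.
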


Let us recall that there exists a sequence of non-negative integers $(\nu_0,\ldots,\nu_{M-1})$, chosen minimally in the lexicographic order, such that
\begin{equation}\label{fncl}
\left|
  \begin{array}{ccccccc}
  1 & x^q & y^q&\ldots & (x^iy^j)^q & \ldots& (y^s)^q \\
  D_t^{(\nu_0)}1 &D_t^{(\nu_0)}x  &D_t^{(\nu_0)}y& \ldots & D_t^{(\nu_0)}x^iy^j&\ldots& D_t^{(\nu_0)}y^s \\
  \vdots&\vdots&\vdots & \cdots & \vdots & \cdots &\vdots\\
  D_t^{(\nu_{M-1})}1& D_t^{(\nu_{M-1})}x &D_t^{(\nu_{M-1})}y & \cdots & D_t^{(\nu_{M-1})}x^iy^j& \ldots & D_t^{(\nu_{M-1})}y^s
  \end{array}
  \right| \neq 0,
  \end{equation}
where $t$ is a separating variable of $\fq(\ff)$  \cite[Proposition 2.1]{SV}. This sequence is called $\fq$-Frobenius sequence of $\ff$ with respect to $\Sigma_s$. It turns out that $\{\nu_0,\ldots,\nu_{M-1}\}=\{\varepsilon_0,\ldots,\varepsilon_M\}\backslash\{\varepsilon_I\}$ for some $I \in \{1,\ldots,M\}$ \cite[Proposition 2.1]{SV}. If $(\nu_0,\ldots,\nu_{M-1})=(0,\ldots,M-1)$, then the curve $\ff$ is called $\fq$-Frobenius classical with respect to $\Sigma_s$. Otherwise, it  is called $\fq$-Frobenius nonclassical.

The following  result  establishes a close relation between classicality and $\fq$-Frobenius classicality, see \cite[Remark 8.52]{HKT}.

\begin{prop}\label{fnc impl nc}
Assume  $p>M$. If $\ff$ is $\fq$-Frobenius nonclassical with respect to $\Sigma_s$, then $\ff$ is nonclassical with respect to $\Sigma_s$.
\end{prop}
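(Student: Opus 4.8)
The plan is to prove the contrapositive: assuming $p>M$, if $\ff$ is classical with respect to $\Sigma_s$ then it is $\fq$-Frobenius classical. So suppose $\varepsilon_i=i$ for all $i=0,\dots,M$, i.e. the $\Sigma_s$-order sequence is $(0,1,\dots,M)$. By \cite[Proposition 2.1]{SV} recalled above, the $\fq$-Frobenius sequence $(\nu_0,\dots,\nu_{M-1})$ is obtained from $(\varepsilon_0,\dots,\varepsilon_M)$ by deleting a single term $\varepsilon_I$; since $\nu_0=\varepsilon_0=0$ we have $I\in\{1,\dots,M\}$, and
$$\{\nu_0,\dots,\nu_{M-1}\}=\{0,1,\dots,M\}\setminus\{I\}.$$
Thus $\ff$ is $\fq$-Frobenius classical precisely when $I=M$, and the whole task is to rule out $I\le M-1$.

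First I would record the determinantal meaning of the deleted term. Writing $v=(\dots:x^iy^j:\dots)$ for the vector of Veronese coordinates and expanding the Frobenius row $(\dots:(x^iy^j)^{q}:\dots)$ in the osculating basis $D_t^{(0)}v,\dots,D_t^{(M)}v$ (a basis because classicality makes the order Wronskian $\det(D_t^{(k)}(x^iy^j))$ nonzero), one gets an identity $\sum_{k=0}^{M}c_k\,D_t^{(k)}v$ for the Frobenius row, with $c_k\in\fqc(\ff)$. Substituting this into the candidate Frobenius determinant \eqref{fncl} taken with $(\nu_0,\dots,\nu_{M-1})=(0,\dots,M-1)$ collapses it to $\pm\,c_M\cdot\det(D_t^{(k)}(x^iy^j))$, so that $\ff$ is $\fq$-Frobenius classical iff $c_M\neq0$, i.e. iff the Frobenius image of the generic point does not lie on the osculating hyperplane. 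The coefficients $c_k$ would then be analysed through the identity $D_t^{(k)}(g^{q})=(D_t^{(k/q)}g)^{q}$ when $q\mid k$ and $D_t^{(k)}(g^{q})=0$ otherwise, which follows from Lucas' congruence applied to $\binom{nq}{k}$.

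The heart of the argument is a closure property for the $\fq$-Frobenius orders mirroring Theorem~\ref{cor1.9}: if $\nu$ is an $\fq$-Frobenius order and $\binom{\nu}{\mu}\not\equiv0\pmod p$, then $\mu$ is an $\fq$-Frobenius order as well. Granting this, the proposition is immediate: if $I\le M-1$ then $M$ itself is an $\fq$-Frobenius order, and since $M<p$ none of $1,\dots,M$ is divisible by $p$, so $\binom{M}{\mu}\not\equiv0\pmod p$ for every $0\le\mu\le M$; the closure property then makes all of $0,1,\dots,M$ into $\fq$-Frobenius orders. That is $M+1$ distinct orders, contradicting the fact that there are only $M$ of them, namely $\nu_0,\dots,\nu_{M-1}$. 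Hence $I=M$ and $\ff$ is $\fq$-Frobenius classical. I expect the main obstacle to be establishing this closure property itself, since it is the Frobenius analogue of \cite[Corollary 1.9]{SV} and is not among the statements recalled above: one must carry out the $p$-adic bookkeeping that turns the $q$-power Hasse-derivative identity into control over which $c_\mu$ can vanish, and verify that $c_M\neq0$ exactly when no obstructing binomial $\binom{M}{\mu}$ is divisible by $p$. This is precisely the step where the hypothesis $p>M$ is indispensable: in small characteristic some $\binom{M}{\mu}$ vanish mod $p$, the deleted order $\varepsilon_I$ may be interior, and a classical curve can fail to be $\fq$-Frobenius classical.
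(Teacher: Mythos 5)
Your reduction is sound as far as it goes: under the contrapositive hypothesis the Frobenius sequence is $\{0,1,\dots,M\}\setminus\{I\}$ with $I\in\{1,\dots,M\}$ by \cite[Proposition 2.1]{SV}, Frobenius classicality is exactly $I=M$, and if the Frobenius orders obeyed the same $p$-adic closure property as the generic orders, then $p>M$ would manufacture $M+1$ distinct Frobenius orders and give the contradiction. The genuine gap is that this closure property---the one statement carrying all the content of Proposition \ref{fnc impl nc}---is asserted rather than proved. It is not among the results recalled in Section \ref{prelim}: Theorem \ref{cor1.9}, i.e.\ \cite[Corollary 1.9]{SV}, concerns only the sequence $(\varepsilon_0,\dots,\varepsilon_M)$, and no analogue for $(\nu_0,\dots,\nu_{M-1})$ is stated there or in \cite{SV}. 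Since you explicitly defer it (``the main obstacle''), what you have is a correct reduction of the proposition to an unproven lemma of essentially the same depth, not a proof. For comparison, the paper itself supplies no argument either---it simply cites \cite[Remark 8.52]{HKT}---so the postponed lemma is precisely what a self-contained proof must provide.

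The case of the closure property that you actually need is provable, but only thanks to a size restriction your sketch never isolates. Suppose $I\le M-1$, so $I$ is not a Frobenius order while $M$ is. Since the sequence $(0,1,\dots,I-1,I)$ was rejected in the lexicographic minimization \eqref{fncl}, there is a relation $D_t^{(I)}v=c\,w+\sum_{j<I}a_j D_t^{(j)}v$, where $v$ is the Veronese vector and $w=(\dots,(x^iy^j)^q,\dots)$ the Frobenius row. Apply $D_t^{(M-I)}$: the left side becomes $\binom{M}{I}D_t^{(M)}v$ by the composition rule for Hasse derivatives; the sum contributes, via Leibniz, only rows $D_t^{(l)}v$ with $l\le M-1$; and $D_t^{(M-I)}(c\,w)$ collapses to $\bigl(D_t^{(M-I)}c\bigr)\,w$ because every entry of $w$ is a $q$-th power, $D_t^{(l)}(g^q)=0$ for $0<l<q$, and here each relevant index satisfies $0<M-I-k\le M-1<p\le q$. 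Since $p>M$ gives $\binom{M}{I}\not\equiv 0\pmod p$, this places $D_t^{(M)}v$ in the span of $w,D_t^{(0)}v,\dots,D_t^{(M-1)}v$, and (using the displayed relation once more to eliminate $D_t^{(I)}v$) that contradicts the linear independence of $w,D_t^{(\nu_0)}v,\dots,D_t^{(\nu_{M-1})}v$ with $\nu_{M-1}=M$. Note where the bound was indispensable: for a Frobenius order $\nu\ge q$ the same computation produces rows of the form $(\dots,(D_t^{(m)}x^iy^j)^q,\dots)$ with $m\ge 1$, which are not in the relevant span, so the unrestricted closure property you invoked cannot be obtained by this bookkeeping; only the bounded case can, and that bounded case is exactly what completes your argument.
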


The $\fq$-Frobenius map $\Phi_q$ is defined on $\ff$ by
\begin{displaymath}
\begin{array}{cccc}
\Phi_q: &\ff & \lra & \ff \\
        &(a_0:a_1:a_2) & \longmapsto & (a_0^q:a_1^q:a_2^q). 
\end{array}
\end{displaymath}
Note that by (\ref{fncl}) and \cite[Corollary 1.3]{SV}, we have that $\ff$ is $\fq$-Frobenius nonclassical with respect to $\Sigma_1$  if  and only if $\Phi_q(P)$ lies on the tangent line of $\ff$ at $P$ for all $P \in \ff$. More generally,  (\ref{fncl}) and \cite[Corollary 1.3]{SV} give the following:

\begin{prop}\label{prop1} Suppose the order sequence $(\varepsilon_0,\ldots,\varepsilon_M)$ of $\ff$  w.r.t.   $\Sigma_s$ is such that  $\varepsilon_i=i$ for $i=0,1,\ldots,M-1$.
 Let  $\Phi_q : \ff \rightarrow \ff $ be the $\fq$-Frobenius map, and for any point $P \in  \ff$, let $\hh_P$ be the s-osculating curve to $\ff$ at $P$.  Then $\ff$ is $\fq$-Frobenius nonclassical w.r.t. $\Sigma_s$ if and only if $\Phi_q(P) \in \hh_P$ for infinitely many points $P \in \ff$. 

\end{prop}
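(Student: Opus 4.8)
The plan is to translate the statement into the language of the $s$-Veronese morphism $\phi_s$ of \eqref{veronese} and then to compare two determinants. Recall that a plane curve of degree $s$ is a linear combination of the monomials $x^iy^j$ ($i+j\le s$), so it corresponds to a hyperplane in $\p^M(\fqc)$; under $\phi_s$ the intersection multiplicity $I(P,\ff\cap\cc)$ equals the contact order of $\phi_s(\ff)$ with the associated hyperplane at $\phi_s(P)$. In particular, the $s$-osculating curve $\hh_P$ corresponds to the osculating hyperplane of $\phi_s(\ff)$ at $\phi_s(P)$, which for all but finitely many $P$ is spanned by the vectors $D_t^{(\varepsilon_0)}\phi_s,\ldots,D_t^{(\varepsilon_{M-1})}\phi_s$ evaluated at $P$. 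Since $\Phi_q(P)$ has affine coordinates $(x(P)^q,y(P)^q)$, we have $\phi_s(\Phi_q(P))=(\ldots:(x^iy^j)^q:\ldots)(P)$, so by \cite[Corollary 1.3]{SV} the incidence $\Phi_q(P)\in\hh_P$ is equivalent, at a generic $P$, to the vanishing at $P$ of the determinant $W$ obtained from the matrix in \eqref{fncl} by replacing the Hasse-derivative orders $\nu_0,\ldots,\nu_{M-1}$ with $\varepsilon_0,\ldots,\varepsilon_{M-1}$. Under the hypothesis $\varepsilon_i=i$ for $i=0,\ldots,M-1$, these orders are exactly $0,1,\ldots,M-1$.

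The key step is then to decide whether $W$ vanishes identically. Recall from \cite[Proposition 2.1]{SV} that the $\fq$-Frobenius sequence $(\nu_0,\ldots,\nu_{M-1})$ is the lexicographically minimal sequence making the determinant in \eqref{fncl} nonzero, and that it is obtained from the order sequence $(\varepsilon_0,\ldots,\varepsilon_M)=(0,1,\ldots,M-1,\varepsilon_M)$ by deleting a single order $\varepsilon_I$. There are only two possibilities: either $\varepsilon_I=\varepsilon_M$, giving $(\nu_0,\ldots,\nu_{M-1})=(0,1,\ldots,M-1)$ (the $\fq$-Frobenius classical case), or $\varepsilon_I=J$ for some $J<M$, giving a sequence different from $(0,\ldots,M-1)$ and still containing $\varepsilon_M$ (the $\fq$-Frobenius nonclassical case). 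In the first case $W$ is precisely the nonzero determinant of \eqref{fncl}, so $W\not\equiv 0$; in the second case the minimality in the definition of the Frobenius sequence forces the determinant with orders $0,1,\ldots,M-1$ to vanish, since $(0,1,\ldots,M-1)$ is the lexicographically smallest increasing $M$-tuple and would otherwise be the Frobenius sequence. Hence $W\equiv 0$.

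Combining the two steps yields the equivalence. If $\ff$ is $\fq$-Frobenius nonclassical, then $W\equiv 0$, so $\Phi_q(P)\in\hh_P$ for every $P$ at which the associated hyperplane of $\hh_P$ is spanned by the generic derivative rows, that is, for all but finitely many $P$, hence for infinitely many $P$. Conversely, if $\ff$ is $\fq$-Frobenius classical, then $W\not\equiv 0$, so $W(P)\ne 0$ for all but finitely many $P$, and for those generic points $\Phi_q(P)\notin\hh_P$; thus $\Phi_q(P)\in\hh_P$ can hold for at most finitely many $P$. Taking contrapositives gives the stated ``if and only if.''

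I expect the main obstacle to be the careful bookkeeping at the finitely many non-generic points: the determinant criterion identifies $\hh_P$ with the span of the derivative rows only where the order sequence equals the generic one $(0,1,\ldots,M-1,\varepsilon_M)$, so one must check that the exceptional locus is finite and that the ``infinitely many $P$'' quantifier is insensitive to it. The remaining content---matching the geometric incidence with the algebraic determinant via \cite[Corollary 1.3]{SV}, and reading off the two cases from the structure of the Frobenius sequence in \cite[Proposition 2.1]{SV}---is then essentially formal.
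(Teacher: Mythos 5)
Your proof is correct and takes essentially the same route as the paper: the paper obtains the proposition directly from the determinant criterion \eqref{fncl} together with \cite[Corollary 1.3]{SV}, which is exactly the argument you spell out (identifying $\hh_P$ at generic points with the span of the derivative rows, and using the lexicographic minimality in the definition of the Frobenius sequence to conclude that the determinant with orders $0,1,\ldots,M-1$ vanishes identically precisely in the $\fq$-Frobenius nonclassical case). The paper leaves these details implicit, so your write-up is simply a fleshed-out version of its intended proof.
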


With regard to the $\fq$-Frobenius classicality of $\ff$ in the cases $s=1$ and $s=2$, the following results were proved \cite{GV2}.

\begin{thm}[Garcia-Voloch]\label{s=1}
Suppose that $p>2$ and let $q=p^h$. Then $\ff$ is $\fq$-Frobenius nonclassical with respect to $\Sigma_1$ if and only if $n=(q-1)/(p^r-1)$ for some integer $r<h$ with $r|h$ and $a,b \in \F_{p^r}$.
\end{thm}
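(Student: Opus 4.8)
The plan is to translate the Frobenius nonclassicality condition into a concrete polynomial divisibility, dispatch the easy implication by a $p^r$-power identity, and obtain the converse by a coefficient comparison governed by a binomial-vanishing lemma. I would first invoke the criterion recalled immediately before Proposition~\ref{prop1} (from \eqref{fncl} and \cite[Corollary 1.3]{SV}): $\ff$ is $\fq$-Frobenius nonclassical with respect to $\Sigma_1$ if and only if $\Phi_q(P)$ lies on the tangent line to $\ff$ at $P$ for every $P \in \ff$. Write $F = aX^n + bY^n - Z^n$; this is smooth, since $a,b \neq 0$ and $p\nmid n$ (if $p\mid n$ then, $\fqc$ being perfect, $a,b$ are $p$-th powers and $F$ would be a $p$-th power, contradicting irreducibility of $\ff$). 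The tangent line at $P=(x_0:y_0:z_0)$ is $ax_0^{n-1}X + by_0^{n-1}Y - z_0^{n-1}Z = 0$, so the incidence $\Phi_q(P)\in T_P\ff$ reads $ax_0^{q+n-1}+by_0^{q+n-1}-z_0^{q+n-1}=0$. Requiring this for all $P$ is equivalent to
\begin{equation}\nonumber
F \mid aX^{m} + bY^{m} - Z^{m}, \qquad m := q + n - 1 ,
\end{equation}
so the theorem reduces to determining exactly when this divisibility holds.

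For sufficiency, assume $n=(q-1)/(p^r-1)$ with $r\mid h$, $r<h$, and $a,b\in\F_{p^r}$. Then $q-1=n(p^r-1)$, whence $m=q+n-1=np^r$. Since $a^{p^r}=a$, $b^{p^r}=b$ and $p$ is odd, the additivity of the $p^r$-power map gives
\begin{equation}\nonumber
aX^{m}+bY^{m}-Z^{m}=(aX^n)^{p^r}+(bY^n)^{p^r}+(-Z^n)^{p^r}=(aX^n+bY^n-Z^n)^{p^r}=F^{p^r},
\end{equation}
which is visibly divisible by $F$. This settles the ``if'' direction.

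For necessity, assume $F\mid aX^m+bY^m-Z^m$; equivalently, in the chart $Z=1$, the relation $ax^m+by^m=1$ holds identically on $\ff:ax^n+by^n=1$. I would test this on the fibres of $(x,y)\mapsto y$: for generic $c$ the points with $y=c$ are $(x_0,c)$ where $x_0$ runs over the $n$ distinct $n$-th roots of $\rho:=(1-bc^n)/a\neq 0$ (here $p\nmid n$ is used). As $x_0^m=x_0^{q-1}x_0^n=\rho\,x_0^{q-1}$, the identity forces $(1-bc^n)x_0^{q-1}=1-bc^m$ for every such $x_0$. Since the roots differ by $n$-th roots of unity, the left-hand side is constant across them only if $\zeta^{q-1}=1$ for a primitive $n$-th root of unity $\zeta$, i.e. $n\mid q-1$. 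Putting $e:=(q-1)/n$, each $x_0^{q-1}=\rho^{e}$, and substituting $\rho=(1-bc^n)/a$ yields the polynomial identity in $c$
\begin{equation}\nonumber
\frac{(1-bc^n)^{e+1}}{a^{e}}=1-bc^{n(e+1)},\qquad n(e+1)=m .
\end{equation}

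Finally I would compare coefficients of the distinct monomials $c^{nk}$, $0\le k\le e+1$. The constant term gives $a^{e}=1$; the top term, using $a^e=1$ and $b\neq 0$, gives $(-b)^{e}=1$; and each intermediate coefficient forces $\binom{e+1}{k}\equiv 0\pmod p$ for $1\le k\le e$. By the classical fact that $\binom{N}{k}\equiv 0\pmod p$ for all $0<k<N$ holds precisely when $N$ is a power of $p$ (read off from $(1+x)^{N}=1+x^{N}$ in $\F_p[x]$ together with the $p$-adic factorization of $N$), we get $e+1=p^r$ with $r\ge 1$. Hence $e=p^r-1$ and $n(p^r-1)=q-1$, so $n=(q-1)/(p^r-1)$ with $r\mid h$ and $r<h$ (forced by $n>1$), while $a^{p^r-1}=1$ and $(-b)^{p^r-1}=1$ place $a,b\in\F_{p^r}^{*}\subset\F_{p^r}$. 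The main obstacle I anticipate lies entirely in this necessity direction: extracting the arithmetic constraint $n\mid q-1$ from the multivaluedness of $x_0^{q-1}$ on each fibre, and then the binomial-vanishing step that pins $e+1$ to a prime power; the sufficiency and the initial reduction are routine once the tangent-line incidence is written out.
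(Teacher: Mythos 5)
Your proof is correct. For the record, the paper itself gives no argument for Theorem \ref{s=1}: it is quoted from Garcia--Voloch \cite{GV2}, so the natural internal comparison is with the paper's own proofs of the cubic analogues, Propositions \ref{frob n-3} and \ref{frob 3n-1}, whose skeleton you follow exactly. In both treatments the incidence criterion (the $\Sigma_1$ form of the discussion preceding Proposition \ref{prop1}) turns $\fq$-Frobenius nonclassicality into the vanishing on $\ff$ of $ax^{q+n-1}+by^{q+n-1}-z^{q+n-1}$, i.e.\ into the divisibility $F \mid aX^{m}+bY^{m}-Z^{m}$ with $F=aX^n+bY^n-Z^n$ and $m=q+n-1$, and the ``if'' direction is the same one-line $p^r$-power identity. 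The difference lies in how the divisibility is unwound in the ``only if'' direction: the paper's method (Proposition \ref{frob n-3}) rewrites it in the separated form $y^n-b_1(x) \mid y^{m}-b_2(x)$ and invokes the Fried--MacRae result (Lemma \ref{pol}), which in the present setting would yield $n\mid q-1$ and the power identity $b_2=b_1^{m/n}$ at one stroke, and then asserts that this identity ``clearly'' forces $1+(q-1)/n$ to be a $p$-power; you instead obtain $n\mid q-1$ by the elementary fibre/root-of-unity argument and make the $p$-power step fully explicit via the vanishing of the intermediate binomial coefficients $\binom{e+1}{k}$, $1\le k\le e$. Your route is thus self-contained (no external lemma) and actually supplies the detail the paper's analogous proofs compress, at the cost of some length; the Fried--MacRae route is shorter and adapts better to cases like Proposition \ref{frob 3n-1}, where the relevant identity is not of separated type and further manipulation is needed anyway. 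One small point to tighten: you assert $r\mid h$ without proof; it follows from $p^r-1 \mid p^h-1$ via the standard fact $\gcd(p^r-1,p^h-1)=p^{\gcd(r,h)}-1$, and deserves that one line.
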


\begin{thm}[Garcia-Voloch]\label{s=2}
Suppose that $p>5$ and let $q=p^h$. Then $\ff$ is $\fq$-Frobenius nonclassical with respect to $\Sigma_2$ if and only if one of the following holds.
\begin{enumerate}
\item[(i)] $p | (n-1)$.
\item[(ii)] $p | (n-2)$ and $n=\frac{2(q-1)}{p^{r}-1}$ with $r<h$ such that $r|h$ and $a,b \in \F_{p^{r}}$.
\item[(iii)] $p | (2n-1)$ and $n=\frac{q-1}{2(p^{r}-1)}$ with $r<h$ such that $r|h$  and $a^{2},b^{2} \in \F_{p^{r}}$.
\item[(iv)] $q=n+1$ and $a+b=1$.
\end{enumerate}
\end{thm}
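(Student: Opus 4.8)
\section*{Proof proposal}

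The plan is to combine the reduction furnished by Proposition \ref{fnc impl nc} and the nonclassicality criterion of Theorem \ref{gv nc} with an explicit determination of the $\Sigma_2$-order sequence of $\ff$ at a generic point. First I would record the governing principle: by \cite[Proposition 2.1]{SV} the $\fq$-Frobenius sequence $(\nu_0,\dots,\nu_4)$ is the order sequence $(\varepsilon_0,\dots,\varepsilon_5)$ with one entry deleted, so $\ff$ is $\fq$-Frobenius classical precisely when $(\varepsilon_0,\dots,\varepsilon_5)=(0,1,2,3,4,\varepsilon_5)$ and the deleted order is $\varepsilon_5$; equivalently, $\ff$ is $\fq$-Frobenius nonclassical iff the $6\times 6$ determinant in \eqref{fncl} formed with the Frobenius row $(1,x^q,y^q,x^{2q},x^qy^q,y^{2q})$ and the rows $D_t^{(0)},\dots,D_t^{(4)}$ vanishes identically. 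Since $p>5=M$, Proposition \ref{fnc impl nc} and Theorem \ref{gv nc} force any $\fq$-Frobenius nonclassical $\ff$ to be nonclassical, hence $p\mid(n-2)(n-1)(n+1)(2n-1)$; as $p>5$ these four factors are pairwise incongruent mod $p$, so exactly one of $n\equiv 1,2,-1,\tfrac{p+1}{2}\pmod p$ holds. This splits the argument into the four cases attached to (i)--(iv).

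The technical core is the generic order sequence, computed from the local expansion $y=y_0+\sum_{r\ge 1}a_rt^r$ with $t=x-x_0$, using $b\,y(t)^n=1-a(x_0+t)^n$ and $\binom{n}{k}\equiv\binom{n\bmod p}{k}\pmod p$ for $k<p$. For $p\mid(n-1)$ one has $y'=-\tfrac{a}{b}\big((x/y)^{(n-1)/p}\big)^{p}$, a constant times a $p$-th power, forcing $a_r=0$ unless $r\equiv 1\pmod p$; a short rank analysis of the coefficient matrix of $(1,x,y,x^2,xy,y^2)$ then yields the order sequence $(0,1,2,p+1,p+2,2p+2)$. Because this omits $3$ and $4$, the string $(0,1,2,3,4)$ can never be a Frobenius sequence, so the determinant in \eqref{fncl} vanishes identically and $\ff$ is $\fq$-Frobenius nonclassical with no further restriction: this is case (i). In the other three cases $\rho:=(n-1)\bmod p\neq 0$, the $a_r$ are generically nonzero for $r<p$, a determinant computation gives $\varepsilon_i=i$ for $i\le 4$, and by Proposition \ref{pwr} the jump $\varepsilon_5$ is a power of $p$, which one checks equals $p$.

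With $\varepsilon_i=i$ for $i\le 4$, Proposition \ref{prop1} applies in cases (ii)--(iv): $\ff$ is $\fq$-Frobenius nonclassical iff the osculating conic $\hh_P$ passes through $(x^q,y^q)$ identically on $\ff$. For $p\mid(n-2)$ the conic has the closed form $\hh_P:\ a\,x^{n-2}X^2+b\,y^{n-2}Y^2=1$, since $n-2\equiv 0$ makes $X^{n-2}$ a $p$-th power and (with $e=(n-2)/p$) $\hh_P|_{\ff}-1=-aX^2(X^{e}-x_0^{e})^{p}-bY^2(Y^{e}-y_0^{e})^{p}$ vanishes to order $p$. Imposing $(x^q,y^q)\in\hh_P$ then reads $a\,x^{\,n-2+2q}+b\,y^{\,n-2+2q}=1$ on $\ff$, and the lemma that $a\,x^{M}+b\,y^{M}=1$ holds on $\ff$ iff $M=np^{r}$ and $a,b\in\F_{p^{r}}$ (immediate from $(ax^n+by^n)^{p^r}=1$) gives $n=\tfrac{2(q-1)}{p^{r}-1}$ with $a,b\in\F_{p^{r}}$, which is case (ii). Cases (iii) and (iv) use the same template, but $\hh_P$ must be produced by solving the order equations $D_t^{(k)}\hh_P=0$ for $k\le 4$; the resulting identity carries squared coefficients when $2n\equiv1$, yielding $n=\tfrac{q-1}{2(p^{r}-1)}$ with $a^{2},b^{2}\in\F_{p^{r}}$ for (iii), whereas for $n\equiv-1$ the relation degenerates and $(x^q,y^q)\in\hh_P$ can hold only when $q=n+1$ and $a+b=1$, giving (iv).

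I expect the main obstacle to be the precise order-sequence bookkeeping and, above all, the explicit determination of $\hh_P$ in cases (iii) and (iv): unlike case (ii), no single $p$-th power produces the osculating conic there, so one must solve the linear system defining $\hh_P$ and then analyze the resulting functional identity on $\ff$ carefully enough to see the squared coefficients in (iii) and the rigidity $q=n+1,\ a+b=1$ in (iv). Establishing $\varepsilon_i=i$ for $i\le 4$ (so that Proposition \ref{prop1} is available) together with $\varepsilon_5=p$ throughout cases (ii)--(iv), via the binomial-coefficient reductions, is the other delicate point.
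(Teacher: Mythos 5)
Your skeleton is the right one, and it is worth noting how it sits relative to the paper: the paper does not reprove items (i)--(iii) at all (they are quoted from \cite{GV2}); its only in-house content for this statement is item (iv), proved in the appendix by exhibiting the osculating conic $(au)^qY+\big((1-a)v\big)^qX-XY=0$ explicitly and applying Proposition \ref{prop1}. Measured against that, the genuine gap in your proposal is that cases (iii) and (iv) --- the only places where real work is left, and (iv) is precisely the case \cite{GV2} missed (Remark \ref{missing}) --- are asserted rather than proved. ``The resulting identity carries squared coefficients'' and ``the relation degenerates'' are not arguments: you produce no $\hh_P$ and no mechanism forcing $n=\frac{q-1}{2(p^r-1)}$, $a^2,b^2\in\F_{p^r}$, respectively $q=n+1$, $a+b=1$. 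Worse, your stated reason for deferring --- that ``no single $p$-th power produces the osculating conic'' in (iii)/(iv) --- is false, and recognizing this is the missing idea: for $p\mid 2n-1$, squaring $a^2x^{2n}+b^2y^{2n}-1=-2abx^ny^n$ gives $(a^2x^{2n}+b^2y^{2n}-1)^2=4a^2b^2x^{2n}y^{2n}$, whose relevant coefficients are $p$-th powers since $2n-1=mp^r$ (exactly parallel to the $p\mid 3n-1$ cubic case in Proposition \ref{p div (n-3)(3n-1)}); for $p\mid n+1$, multiplying the curve relation by $xy$ gives $ax^{n+1}y+by^{n+1}x-xy=0$, which is how the appendix obtains its conic. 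Even with these conics in hand, the forward (``only if'') rigidity in (ii)--(iv) requires the Fried--MacRae-type analysis (Lemma \ref{pol}, or the substitution argument of Proposition \ref{frob 3n-1}): your divisibility lemma in case (ii) is justified only in the ``if'' direction --- that $\big(\tfrac{1}{b}-\tfrac{a}{b}x^n\big)^{1+2(q-1)/n}$ being a binomial forces the exponent to be a power of $p$ is exactly the step you skipped --- and in case (iv) the conclusion $q=n+1$, $a+b=1$ comes from showing the polynomial $x^qy^q-ax^{n+1}y^q-by^{n+1}x^q$ must vanish identically, which the paper gets by citing the computation inside the proof of \cite[Theorem 3]{GV2}.

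There is also a concrete error in your case (i): formal integration in characteristic $p$ does not force $a_r=0$ unless $r\equiv 1\pmod p$, because the coefficients $a_r$ with $p\mid r$ are annihilated by $D_t^{(1)}$ and hence unconstrained by the equation for $y'$. The generic tangent order is $\varepsilon_2=p^r$ (a power of $p$, as Proposition \ref{pwr} demands --- compare the Hermitian case $n-1=p^r$), so the $\Sigma_2$-order sequence is $(0,1,2,p^r,p^r+1,2p^r)$; your claimed $(0,1,2,p+1,p+2,2p+2)$ actually contradicts Proposition \ref{pwr}. Your conclusion for (i) survives, since $3$ and $4$ are still not orders, so the Frobenius sequence cannot be $(0,1,2,3,4)$. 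Two smaller points: in cases (ii)--(iv) no ``rank analysis'' is needed for $\varepsilon_i=i$, $i\le 4$ --- classicality with respect to $\Sigma_1$ (available since $p\nmid n-1$ there) plus degenerate conics already yields the orders $0,1,2,3,4$, and there are only six orders in total; and $\varepsilon_5$ is a power $p^r$ of $p$, not necessarily $p$ itself, so your ``which one checks equals $p$'' should be weakened accordingly (nothing downstream uses $\varepsilon_5=p$, only $\varepsilon_5\ge p$, via Lemma \ref{trans} or directly).
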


\begin{rem}\label{missing}
Item (iv) of Theorem \ref{s=2} is a minor case that was apparently overlooked in \cite{GV2}. A  proof of it is included   in the appendix.
\end{rem}

The following result  (see \cite[Theorem 1.1]{KS} or \cite[Remark 8.109]{HKT}) will be used to compute the number of $\fq$-rational points of certain Fermat curves in Section 5.

\begin{thm}[Korchm\'aros-Sz\"onyi]\label{ferko}
Let $\ff:X^n+Y^n+Z^n=0$ and $q=p^r$. Suppose that $n$ divides $\frac{q^m-1}{q-1}$, where $m>1$. Let $t$ be defined by $q \equiv t \mod \frac{q^m-1}{n(q-1)}$ and $0<t<\frac{q^m-1}{n(q-1)}$. If $l=\gcd\left(\frac{q^m-1}{n(q-1)},t+1\right)$, then
$$
N_{q^m}(\ff)=3n+n^2(q-2)+n^2(l-1)(l-2)
$$
provided that
$$
p>\left(\frac{2}{\sqrt[t+1]{\sin\left(\frac{n(q-1)\pi}{2(q^m-1)}\right)}}+1\right)^{(t-1)\left(\frac{q^m-1}{n(q-1)}-l\right)}.
$$
\end{thm}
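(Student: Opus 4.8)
The plan is to convert the point count into a multiplicative counting problem in $\fqm^{*}$ and then into a collinearity count in $\mathrm{PG}(m-1,q)$, so that all of the arithmetic is carried by the cyclic subset determined by the $n$-th powers; the hypothesis on $p$ will enter only at the very last step. Write $Q=q^{m}$, $N=\frac{Q-1}{q-1}$ and $d=\frac{Q-1}{n(q-1)}=N/n$, so the modulus in the statement is $d$; since $\frac{n(q-1)}{Q-1}=\frac1d$, the sine occurring in the hypothesis is $\sin\!\big(\tfrac{\pi}{2d}\big)$, and $q\equiv t\pmod d$ with $l=\gcd(d,t+1)$. From $n\mid N$ we get $n\mid Q-1$, so the set $H$ of $n$-th powers is a subgroup of $\fqm^{*}$ of order $\frac{Q-1}{n}=(q-1)d$; as $\fqm^{*}$ is cyclic and $(q-1)\mid(q-1)d$, we have $\fq^{*}\subseteq H$, and $H$ is $\Phi_{q}$-stable.

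First I would split the $\fqm$-points of $\ff$ by vanishing coordinates. No point has two zero coordinates; a point $(X:Y:0)$ satisfies $(X/Y)^{n}=-1$, and since $p$ is odd the order $(q-1)d$ of $H$ is even, so $-1\in H$ and each coordinate line carries exactly $n$ points, giving the term $3n$. For the points with all coordinates nonzero, the substitution $u=X^{n},v=Y^{n},w=Z^{n}$ sends them $n^{3}$-to-one onto $\{(u,v,w)\in H^{3}:u+v+w=0\}$, so after dividing by $Q-1$ for projectivisation these points number $\frac{n^{3}}{Q-1}\,T$ with $T=\#\{(u,v,w)\in H^{3}:u+v+w=0\}$. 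Because $H$ is a group with $-1\in H$, setting $\lambda=v/u$ yields $T=|H|\cdot S$ where $S=\#\{\lambda\in H:1+\lambda\in H\}$, and a short computation gives
\[
N_{q^m}(\ff)=3n+n^{2}S .
\]
Hence the theorem is equivalent to the identity $S=(q-2)+(l-1)(l-2)$.

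The count $S$ I would split according to the image $[\lambda]$ of $\lambda$ in $\K:=H/\fq^{*}\subseteq\mathrm{PG}(m-1,q)$, the cyclic subset of size $d$ cut out by $H$. The elements with $[\lambda]=[1]$ are exactly $\lambda\in\fq^{*}$, and since $\fq\subseteq H\cup\{0\}$ they contribute the main term $q-2$ (all $\lambda\in\fq^{*}$ except $\lambda=-1$). For $[\lambda]\neq[1]$ the three elements $1,\lambda,1+\lambda$ span an $\fq$-plane, so $[1],[\lambda],[1+\lambda]$ are collinear in $\mathrm{PG}(m-1,q)$; parametrising the line $\ell=\overline{[1]\,[\lambda]}$ by $c\mapsto[1+c\lambda_{0}]$ shows that a point $[\lambda]\in\K\cap\ell$ with $[\lambda]\neq[1]$ contributes exactly $|\K\cap\ell|-2$ elements $\lambda$ to $S$. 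Summing over lines through $[1]$ gives
\[
S=(q-2)+\sum_{\ell\ni[1]}\big(|\K\cap\ell|-1\big)\big(|\K\cap\ell|-2\big),
\]
so it remains to show that through $[1]$ there is a single line meeting $\K$ in more than two points, and that it meets $\K$ in exactly $l$ points.

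The structural input is that $\Phi_{q}$ acts on $\K\cong\zz/d$ as multiplication by $t$, so the norm-type map $\xi\mapsto\xi^{\,q+1}$ acts as multiplication by $t+1$, whose kernel in $\K$ has exactly $\gcd(d,t+1)=l$ elements; these $l$ points lie on one line through $[1]$ (the $\F_{q^{2}}$-rational, Hermitian-type secant), producing the term $(l-1)(l-2)$. The hard part, and the only place the hypothesis on $p$ is used, is to prove that no other line through $[1]$ is a long secant: any additional collinear triple in $\K$ not coming from this norm structure would furnish an $\fqm$-point on an auxiliary curve whose degree is governed by the ``non-subfield'' parameter $(t-1)(d-l)$ and whose coordinates are $d$-th-root-of-unity combinations separated by at least $2\sin(\tfrac{\pi}{2d})$. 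I expect the main obstacle to be making this last estimate effective: one applies the Hasse--Weil (or St\"{o}hr--Voloch) bound to that auxiliary curve and checks that the displayed threshold $p>\big(2/\sqrt[t+1]{\sin(\pi/2d)}+1\big)^{(t-1)(d-l)}$ makes the resulting bound strictly less than one, so the sporadic secants cannot exist and $S=(q-2)+(l-1)(l-2)$, completing the proof.
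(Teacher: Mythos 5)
First, a point of reference: the paper itself contains no proof of Theorem \ref{ferko}; it is quoted from Korchm\'aros and Sz\"onyi \cite{KS} (see also \cite[Remark 8.109]{HKT}), so your proposal can only be measured against that source. Up to the point where the hypothesis on $p$ enters, your reduction is correct, and it is precisely the Korchm\'aros--Sz\"onyi framework. The count of the points with a zero coordinate (valid since $|H|=(q-1)d$ is even, so $-1\in H$), the identity $N_{q^m}(\ff)=3n+n^{2}S$ with $S=\#\{\lambda\in H:1+\lambda\in H\}$, and the secant decomposition $S=(q-2)+\sum_{\ell\ni[1]}(k_\ell-1)(k_\ell-2)$, where $k_\ell=|\K\cap\ell|$ and $\ell$ runs over the lines of $\mathrm{PG}(m-1,q)$ through $[1]$, all check out (the per-line count $(k_\ell-1)(k_\ell-2)$ follows from your parametrisation $c\mapsto[1+c\lambda_{0}]$, which hits each point of $\ell\setminus\{[1],[\lambda_0]\}$ exactly once as $c$ ranges over $\fq^{*}$). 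Your identification of the distinguished secant is also sound: $\xi^{q+1}\in\fq^{*}$ forces $\xi^{q^{2}-1}=1$, so the kernel of multiplication by $t+1$ on $\K\cong\zz/d\zz$ is exactly $\K\cap\ell_{0}$ for the line $\ell_{0}$ induced by the $\fq$-plane $\F_{q^{2}}\subseteq\fqm$ when $m$ is even, while for $m$ odd one checks $l=\gcd(d,q+1)=1$ automatically, consistent with $(l-1)(l-2)=0$.

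The genuine gap is that the entire content of the hypothesis on $p$ --- the assertion that $\ell_{0}$ is the \emph{only} line through $[1]$ meeting $\K$ in more than two points --- is deferred with an ``I expect,'' and the mechanism you sketch for it would fail. Applying Hasse--Weil (or St\"ohr--Voloch) to an auxiliary curve over $\fqm$ can never certify ``fewer than one rational point'': those bounds fluctuate around a main term $q^{m}+1$ and are intrinsically $p$-adic, so they cannot produce the archimedean factor $\sin\left(\frac{n(q-1)\pi}{2(q^{m}-1)}\right)=\sin\left(\frac{\pi}{2d}\right)$ appearing in the threshold. What Korchm\'aros and Sz\"onyi actually do at this step is lift to characteristic zero: a sporadic collinear triple in $\K$ translates into polynomial relations among complex $d$-th roots of unity (faithfully, since $p\nmid d$, by reducing $\zz[\zeta_d]$ modulo a prime above $p$); distinct roots of unity are separated by quantities of size $2\sin\left(\frac{\pi}{2d}\right)$, which makes a certain resultant-type algebraic integer nonzero, while its archimedean absolute values are bounded by $\left(\frac{2}{\sqrt[t+1]{\sin(\pi/(2d))}}+1\right)^{(t-1)(d-l)}$. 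The existence of a sporadic secant in characteristic $p$ would force $p$ to divide the norm of this nonzero integer, impossible once $p$ exceeds the displayed bound. So the missing idea is an integrality-plus-lifting argument, not a point-count estimate; as written, your proposal establishes only the (correct) reduction of the theorem to the secant distribution of $\K$, which is the easier half.
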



\section{Classicality of $\ff$ with respect to cubics}\label{class}

Let us recall that $\ff:aX^n+bY^n=Z^n$ is an irreducible curve defined over $\fq$. Based on  Proposition \ref{fnc impl nc},  the study of $\fq$-Frobenius nonclassicality of $\ff$, with respect to $\Sigma_s$, can benefit directly from the study  of  nonclassicality of $\ff$. In this section, we  establish necessary and sufficient conditions for $\ff$ to be nonclassical with respect to $\Sigma_3$.

\begin{rem}\label{pd}
As mentioned in  Section \ref{prelim}, for any point $P \in \ff$, the number of distinct $(\Sigma_s,P)$-orders is ${s+2 \choose 2}$. In particular, there exist $10$ distinct $(\Sigma_3,P)$-orders.   
\end{rem}

Note that if the curve $\ff$ is nonclassical with respect to lines, then it is also nonclassical with respect to conics. Indeed assume that   the order sequence of $\ff$ for $\Sigma_1$ is $(0,1,\varepsilon>2)$. Thus, considering  the conics given by the unions of two of these lines, we have that $0,1,2,\varepsilon,\varepsilon+1$, and $2\varepsilon$ comprise the 6 distinct  $\Sigma_2$-orders (cf. Remark \ref{pd}). Similarly, one can see that if $\ff$ is nonclassical with respect to lines, then it is also nonclassical  with respect to cubics.

Now assume that $p>7$ and that $\ff$ is classical with respect to $\Sigma_1$ but nonclassical with respect to $\Sigma_2$. Then, by Proposition \ref{pwr}, the order sequence of $\ff$ with respect to $\Sigma_2$ is $(0,1,2,3,4,p^r)$, for some integer $r>0$. Considering all  possible unions of a conic and  a line, we have that the order sequence of $\ff$ with respect to $\Sigma_3$ is $(0,1,2,3,4,5,6,p^r,p^r+1,p^r+2)$. Therefore, $\ff$ is nonclassical with respect to $\Sigma_3$. The next lemma summarizes the above discussion.

\begin{lem}\label{con imp cub}
Suppose  $p>7$. If $\ff$ is nonclassical with respect to either $\Sigma_1$ or $\Sigma_2$, then $\ff$ is nonclassical with respect to $\Sigma_3$.
\end{lem}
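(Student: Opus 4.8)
The plan is to argue pointwise at a general point $P\in\ff$, exploiting the fact that contact orders add when defining forms are multiplied: if $\mathcal{C}_1,\mathcal{C}_2$ are plane curves cut out by forms of degrees $s_1,s_2$, then $I(P,\ff\cap\mathcal{C}_1\mathcal{C}_2)=I(P,\ff\cap\mathcal{C}_1)+I(P,\ff\cap\mathcal{C}_2)$, since the order of vanishing at $P$ is additive under multiplication of the forms restricted to $\ff$. Because all but finitely many points share the generic $\Sigma_s$-order sequence, it suffices to exhibit, at one general $P$, exactly $\binom{5}{2}=10$ distinct cubic contact orders (Remark \ref{pd}) whose collection is not $\{0,1,\ldots,9\}$: as there are precisely $10$ distinct $(\Sigma_3,P)$-orders, any $10$ distinct realized values must constitute the full order sequence at $P$, hence the generic $\Sigma_3$-order sequence.

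First I would treat the case where $\ff$ is nonclassical with respect to $\Sigma_1$. At a general $P$ the line contact orders form the set $\{0,1,\varepsilon\}$ with $\varepsilon>2$, and Proposition \ref{pwr} (with $M=2$) forces $\varepsilon=p^r$, so $\varepsilon\geq p>7$. Forming all products of three lines whose individual orders lie in $\{0,1,\varepsilon\}$ realizes the ten values
$$\{0,1,2,3,\varepsilon,\varepsilon+1,\varepsilon+2,2\varepsilon,2\varepsilon+1,3\varepsilon\},$$
which are pairwise distinct precisely because $\varepsilon>3$. These are therefore the ten $(\Sigma_3,P)$-orders, and since $\varepsilon>3$ they differ from $(0,1,\ldots,9)$; hence $\ff$ is nonclassical with respect to $\Sigma_3$.

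Next I would treat the case where $\ff$ is classical with respect to $\Sigma_1$ but nonclassical with respect to $\Sigma_2$. Now the line orders at a general $P$ are $\{0,1,2\}$, so products of two lines realize $\{0,1,2,3,4\}$; as there are six conic orders in all, a sixth order $\beta>4$ occurs. Nonclassicality with respect to $\Sigma_2$ rules out $\beta=5$, and Proposition \ref{pwr} (now with $M=5$, using $p>7$) yields $\beta=p^r>6$, so the generic $\Sigma_2$-order sequence is $(0,1,2,3,4,p^r)$ and the osculating conic $\hh_P$ has contact order $p^r$ with $\ff$ at $P$. Products of three lines realize $\{0,1,\ldots,6\}$, while the cubics $\hh_P\cdot L$, as $L$ runs over lines of order $0,1,2$, realize $\{p^r,p^r+1,p^r+2\}$; since $p^r>6$ these ten values are distinct and form the order sequence $(0,1,2,3,4,5,6,p^r,p^r+1,p^r+2)\neq(0,1,\ldots,9)$. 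Hence $\ff$ is nonclassical with respect to $\Sigma_3$, and the two cases together exhaust the hypothesis.

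I expect the only delicate points to be the two bookkeeping steps rather than any deep geometry. The first is verifying that the ten realized orders are genuinely \emph{distinct}: this is exactly where the prime-power conclusion of Proposition \ref{pwr} and the hypothesis $p>7$ are indispensable, since for small $\varepsilon$ the sums would collide (for instance $\varepsilon=3$ would merge $1+1+1$ with $\varepsilon$). The second is the justification that exhibiting ten distinct realized orders determines the \emph{entire} order sequence; this rests on the exact count $\binom{5}{2}=10$ from Remark \ref{pd} together with the coincidence of the generic order sequence with the order sequence at a general point.
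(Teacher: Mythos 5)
Your proof is correct and follows essentially the same route as the paper, whose ``proof'' is the discussion preceding the lemma: realize contact orders additively via unions of lines (resp.\ a line and the osculating conic), use Proposition \ref{pwr} to pin the exceptional order down to a power of $p$, and conclude from the exact count of ten $(\Sigma_3,P)$-orders in Remark \ref{pd}. If anything, your write-up is slightly more careful than the paper's, which leaves the distinctness of the ten values in the $\Sigma_1$-nonclassical case (requiring $\varepsilon>3$, guaranteed by $\varepsilon=p^r$) implicit.
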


The following  result, which  will be a critical factor in our approach, extends  \cite[Lemma 1.3.8]{Na}. Its proof, which   was provided by M. Homma and S. J. Kim in a private communication, can be found in the appendix. 

\begin{lem}[Homma-Kim]\label{trans} Let $S$ be a surface defined over  an algebraically closed field $K$, and let  $P \in S$ be a nonsingular point.
If $C$, $D_1$ and $D_2$ are  effective divisors, of which  no two  have a common component, and $P$ is a  nonsingular point of $C$, 
then  $$I(D_1. D_2,P) \geq min \{I(C . D_1,P),I(C. D_2,P)\}.$$
\end{lem}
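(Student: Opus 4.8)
The plan is to work in the regular local ring $R=\mathcal{O}_{S,P}$, which has dimension $2$ because $P$ is a nonsingular point of the surface $S$. Let $c,d_1,d_2\in R$ be local equations for $C,D_1,D_2$ at $P$, so that by definition $I(E.F,P)=\dim_K R/(e,f)$ for the relevant local equations $e,f$. Since no two of the three divisors share a common component, each such colength is finite; in particular $\bar c$ is a non-zerodivisor in $A:=R/(d_1)$, and $R/(d_1,d_2)$ has finite length. As $I(D_1.D_2,P)$ is symmetric in $D_1,D_2$, I may relabel so that $a_1:=I(C.D_1,P)\le I(C.D_2,P)=:a_2$, reducing the claim to $I(D_1.D_2,P)\ge a_1$.

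The crucial input is that $P$ is nonsingular on $C$, so $B:=R/(c)=\mathcal{O}_{C,P}$ is a discrete valuation ring; write $v$ for its valuation. Reduction modulo $c$ gives $v(\bar d_i)=\dim_K R/(c,d_i)=a_i$, hence $v(\bar d_2)=a_2\ge a_1=v(\bar d_1)$, so that $\bar d_2\in \bar d_1 B$. Lifting a witness of this divisibility to $R$ yields $g\in R$ with $d_2-gd_1\in(c)$, say $d_2=gd_1+rc$ for some $r\in R$. The key consequence is the ideal identity
\begin{equation}\nonumber
(d_1,d_2)=(d_1,\,gd_1+rc)=(d_1,rc),
\end{equation}
which trades $d_2$ for the more tractable element $rc$ and is precisely where the DVR structure is used.

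It then remains to compute $\dim_K R/(d_1,rc)=\dim_K A/\overline{rc}\,A$ inside the one-dimensional ring $A=R/(d_1)$, where $\bar c$ is a non-zerodivisor. Combining the short exact sequence $0\to \bar c A/\overline{rc}\,A\to A/\overline{rc}\,A\to A/\bar cA\to 0$ with the isomorphism $A/\bar r A\xrightarrow{\ \bar c\ }\bar cA/\overline{rc}\,A$ (an isomorphism exactly because $\bar c$ is a non-zerodivisor) gives
\begin{equation}\nonumber
I(D_1.D_2,P)=\dim_K A/\overline{rc}\,A=\dim_K A/\bar cA+\dim_K A/\bar rA=a_1+\dim_K A/\bar rA\ge a_1,
\end{equation}
which is the desired inequality. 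Each displayed quotient has finite length, being a subquotient of the finite-dimensional module $R/(d_1,d_2)$.

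I expect the main obstacle to be the second paragraph: exploiting the nonsingularity of $C$ to pass to the valuation ring $B$ and, from the numerical inequality $a_1\le a_2$, extracting the explicit relation $d_2=gd_1+rc$ that produces the identity $(d_1,d_2)=(d_1,rc)$. Once this reduction is secured, the length computation is routine bookkeeping with the non-zerodivisor $\bar c$.
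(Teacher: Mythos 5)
Your proof is correct, but it follows a genuinely different route from the paper's. Both arguments localize at $P$ and use the hypothesis in the same place---nonsingularity of $C$ at $P$ makes $B=\mathcal{O}_{C,P}=R/(c)$ a discrete valuation ring---but the mechanisms diverge from there. The paper projects the whole intersection ideal onto $C$: from the surjections $R/(g,h)\twoheadrightarrow B/(\bar g,\bar h)\twoheadrightarrow K[[t]]/(\bar g,\bar h)$ it bounds $I(D_1.D_2,P)$ below by the colength of $(\bar g,\bar h)$ in the DVR (resp.\ its completion), which equals $\min\{v(\bar g),v(\bar h)\}$; the inequality arises from information discarded by the projection. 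You instead use the DVR only to manufacture the divisibility $\bar d_1\mid \bar d_2$ (after the harmless relabeling $a_1\le a_2$), lift it to the relation $d_2=gd_1+rc$ in $R$, rewrite $(d_1,d_2)=(d_1,rc)$, and then carry out the length count in $A=\mathcal{O}_{D_1,P}=R/(d_1)$, where $\bar c$ is a non-zerodivisor because $c$ and $d_1$ are coprime in the UFD $R$. What your approach buys is strictly more than the statement: the exact decomposition $I(D_1.D_2,P)=I(C.D_1,P)+\dim_K A/\bar r A$, which identifies the excess over the minimum as a residual intersection number along $D_1$; what the paper's approach buys is brevity, since it never needs the lift $d_2=gd_1+rc$ or the short exact sequence. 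Your supporting details are sound: the kernel of multiplication by $\bar c$ on $A$ is $\bar r A$ precisely because $\bar c$ is a non-zerodivisor, all the lengths involved are finite because $R/(d_1,d_2)$ is Artinian (indeed $r$ cannot share a factor with $d_1$, as such a factor would also divide $d_2$), and $v(\bar d_i)=\dim_K B/\bar d_i B$ holds since $B$ has residue field $K$.
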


Since a curve's classicality  is a geometric property, for this section it is assumed that $a=b=1$. We begin with some preliminary results.

\begin{lem}\label{simp} 
Assume $p>7$. Let $\fqc(x,y)$ be the function field of $\mathcal{F}$, and $P=(u:v:1) \in \mathcal{F}$ be a generic point. Suppose that there exists a polynomial $G(X,Y)=\sum a_{ij}(x,y)^pX^iY^j \in \fqc[x,y][X,Y]$  of degree $d\geq 3$ such that $G(x,y)=0$. For $G_P(X,Y):=\sum a_{ij}(u,v)^pX^iY^j \in \fqc [X,Y]$,  the following holds:
\begin{enumerate}[\rm(a)]

\item If $G_P(X,Y)$ is irreducible of degree $d=3$, then $\mathcal{F}$ is nonclassical with respect to $\Sigma_3$ and the curve $\G_P: G_P(X,Y)=0$
is the osculating cubic to  $\ff$ at $P$.

\item 
If  the curve $\G_P: G_P(X,Y)=0$ is such that
$I(P,\G_P \cap \mathcal{C})<p$ for any cubic  $\mathcal{C}$, then  $\mathcal{F}$ is classical with respect to $\Sigma_3$.

\end{enumerate}

\end{lem}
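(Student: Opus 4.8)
The engine for both parts is a single intersection estimate. Take $P=(u:v:1)\in\ff$ generic, so that $P$ is a nonsingular point of $\ff$ and $x,y$ (together with all the $a_{ij}$) are regular at $P$. I claim that
\[
I(P,\ff\cap\G_P)=v_P\big(G_P(x,y)\big)\geq p.
\]
Indeed, since $G(x,y)=0$ and every coefficient of $G$ is a $p$-th power, the Frobenius (``freshman's dream'') identity gives
\[
G_P(x,y)=G_P(x,y)-G(x,y)=\sum_{i,j}\big(a_{ij}(u,v)-a_{ij}(x,y)\big)^{p}\,x^iy^j .
\]
Each function $a_{ij}(x,y)-a_{ij}(u,v)$ vanishes at $P$, so its $p$-th power has valuation $\geq p$ there; as $v_P(x^iy^j)\geq 0$, the claim follows. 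This bound holds for any $d\geq 3$, and it is the only input the coefficients' being $p$-th powers is used for.

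For part (a), $d=3$ makes $\G_P$ a cubic, so $j_9(P)\geq I(P,\ff\cap\G_P)\geq p$. For generic $P$ one has $j_9(P)=\varepsilon_9$, whence $\varepsilon_9\geq p>9=M$; thus the order sequence cannot be $(0,\dots,9)$ and $\ff$ is nonclassical with respect to $\Sigma_3$. To identify $\G_P$ with the osculating cubic $\hh_P$ (which satisfies $I(P,\ff\cap\hh_P)=\varepsilon_9\geq p$ for generic $P$), suppose $\hh_P\neq\G_P$. Since $G_P$ is irreducible of degree $3$, the cubics $\G_P$ and $\hh_P$ then share no component, and neither shares a component with the irreducible curve $\ff$ of degree $n>3$. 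Lemma \ref{trans} applied with $C=\ff$, $D_1=\G_P$, $D_2=\hh_P$ yields
\[
I(P,\G_P\cap\hh_P)\geq\min\{I(P,\ff\cap\G_P),\,I(P,\ff\cap\hh_P)\}\geq p,
\]
contradicting B\'ezout's theorem, which forces $I(P,\G_P\cap\hh_P)\leq 9<p$. Hence $\hh_P=\G_P$.

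For part (b), suppose for contradiction that $\ff$ is nonclassical with respect to $\Sigma_3$. Then $\varepsilon_9>9$, and Theorem \ref{cor1.9} forces $\varepsilon_9\geq p$: if $\varepsilon_9<p$ then all of $0,1,\dots,\varepsilon_9$ would be $\Sigma_3$-orders, yielding the classical sequence. So for generic $P$ the osculating cubic $\hh_P$ satisfies $I(P,\ff\cap\hh_P)=\varepsilon_9\geq p$. The hypothesis that $I(P,\G_P\cap\mathcal C)<p$ for every cubic $\mathcal C$ forces $\G_P$ to have no component of degree $\leq 3$ (otherwise completing such a component to a cubic would produce an infinite intersection); in particular $\G_P$ and $\hh_P$ share no component. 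Applying Lemma \ref{trans} with $C=\ff$, $D_1=\G_P$, $D_2=\hh_P$ gives
\[
I(P,\G_P\cap\hh_P)\geq\min\{I(P,\ff\cap\G_P),\,I(P,\ff\cap\hh_P)\}\geq p,
\]
which directly contradicts the hypothesis, since $\hh_P$ is a cubic. Therefore $\ff$ is classical with respect to $\Sigma_3$.

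The conceptual core is short: the $p$-th-power coefficients produce an intersection of order $\geq p$ with $\ff$, and Lemma \ref{trans} together with B\'ezout turns this into a statement about osculating cubics. The main obstacle is the genericity and common-component bookkeeping needed to invoke Lemma \ref{trans} legitimately. In particular, one must justify that for generic $P$ the point $P$ is nonsingular on $\ff$ with $x,y$ regular there, that $j_9(P)=\varepsilon_9$, and--most delicately in part (b), where $\G_P$ may have degree $d>3$--that $\G_P$ shares no component with $\ff$; were $\ff\subseteq\G_P$, the estimate $I(P,\ff\cap\G_P)\geq p$ would be vacuous and Lemma \ref{trans} inapplicable. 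Ruling this out for generic $P$, rather than the two-line Homma--Kim/B\'ezout clash, is where the care is required.
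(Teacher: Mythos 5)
Your proof is correct and takes essentially the same route as the paper's: the same $p$-th-power valuation estimate $I(P,\ff\cap\G_P)\geq p$, followed by the Homma--Kim Lemma \ref{trans} combined with B\'ezout's theorem to force $\G_P=\hh_P$ in (a) and to contradict the hypothesis in (b). The common-component subtleties you flag at the end are glossed over by the paper as well, and in (b) the troublesome case $\ff\subseteq\G_P$ is immediate anyway: additivity of intersection multiplicities over components would give $I(P,\G_P\cap\hh_P)\geq I(P,\ff\cap\hh_P)\geq p$, contradicting the hypothesis directly.
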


\begin{proof}
Let $\G_P$ be the curve defined by $G_P(X,Y)=0$. Since
\begin{eqnarray}
G_P(x,y)&=&G_P(x,y)-G(x,y) \nonumber\\
      &=&\sum (a_{ij}(u,v)-a_{ij}(x,y))^px^iy^j, \nonumber
\end{eqnarray}
it follows that $v_P(G_P(x,y)) \geq p$, that is, 
\begin{equation}\label{domi}
I(P,\ff \cap \G_P) \geq p.
\end{equation}

Let $\hh_P$ be the osculating cubic to $\ff$ at $P$. For assertion (a),  note that $\deg(\G_P)=3$ and inequality \eqref{domi} imply $(P,\ff \cap \hh_P) \geq p$, and then Lemma \ref{trans} gives
$$
I(P,\hh_P \cap \G_P) \geq p >9=\deg(\hh_P) \cdot \deg(\G_P).
$$
Thus, by B\'ezout's theorem, $\G_P$ and $\hh_P$ must be the same curve, and since $P$ is generic the result follows.  Assertion (b)  follows directly from Lemma \ref{trans} and from the fact that  the   nonclassicality of $\ff$ with respect to $\Sigma_3$ implies $I(P,\ff \cap \hh_P) \geq p$ (c.f. Theorem \ref{cor1.9}). 

\end{proof}

\begin{rem}\label{lemalt} Note that   if $G_P(X,Y)$ is irreducible of degree $<p/3$, then, by B\'ezout's theorem, the conditions on Lemma \ref{simp}(b) are fulfilled, i.e.,
  $\mathcal{F}$ is classical with respect to $\Sigma_3$.
\end{rem}

\begin{lem}\label{classic}
If  $p>11$ divides  $(n+2)(2n+1)(2n-3)(3n-2)$, then $\ff$ is classical with respect to $\Sigma_3$.
\end{lem}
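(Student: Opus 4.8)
We want to show that if $p>11$ divides $(n+2)(2n+1)(2n-3)(3n-2)$, then $\ff$ is classical with respect to $\Sigma_3$. By Lemma~\ref{simp}(b) and Remark~\ref{lemalt}, the strategy is to produce, for a generic point $P=(u:v:1)\in\ff$, an explicit polynomial relation $G(x,y)=0$ of the form $\sum a_{ij}(x,y)^p X^iY^j$ coming from the Frobenius/Hasse-derivative structure, so that the associated curve $\G_P\colon G_P(X,Y)=0$ has low degree — ideally degree $<p/3$, or at least intersects every cubic at $P$ with multiplicity $<p$.

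**Finding the relation $G$.** The plan is to exploit the defining equation $x^n+y^n=1$ and its Hasse derivatives. Differentiating $x^n+y^n=1$ repeatedly (using $t=x$ as separating variable, with $dy/dx=-x^{n-1}/y^{n-1}$) produces polynomial relations among $x,y$ and their derivatives. The four linear factors $(n+2),(2n+1),(2n-3),(3n-2)$ strongly suggest that each factor corresponds to a congruence $p\mid (\alpha n+\beta)$ under which some specific coefficient in a Hasse-derivative identity vanishes or collapses modulo $p$. My plan is to treat the four cases separately: in each case, the divisibility lets me rewrite a suitable derivative expression so that a power $p$ gets factored out, yielding $G(x,y)$ whose coefficients $a_{ij}(x,y)$ are themselves $p$-th powers (times monomials in $x,y$). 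Concretely, I would compute the first several Hasse derivatives $D_x^{(k)}(x^iy^j)$ for $i+j\le 3$, form the Wronskian-type combinations, and isolate the term where the hypothesis $p\mid(\alpha n+\beta)$ forces the exponent pattern to become a multiple of $p$.

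**Degree control and conclusion.** Once $G$ is produced, I must check that $G_P(X,Y)$ has degree bounded well below $p/3$ (here $p>11$ and the natural candidate degrees are $3$ or $4$), so that Remark~\ref{lemalt} or Lemma~\ref{simp}(b) applies directly. If the reduced polynomial $G_P$ is irreducible of small degree, B\'ezout's theorem forces $I(P,\G_P\cap\mathcal C)<p$ for every cubic $\mathcal C$, giving classicality. If $G_P$ happens to be reducible, I would instead verify the intersection-multiplicity bound of Lemma~\ref{simp}(b) by hand, using Lemma~\ref{trans} (Homma-Kim) to control $I(P,\G_P\cap\mathcal C)$ through the components.

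**Main obstacle.** The hard part will be the bookkeeping of the Hasse derivatives: for $\Sigma_3$ one has $M=9$, so the relevant derivative orders run up through $9$, and extracting exactly which coefficient vanishes modulo $p$ under each of the four congruences requires careful, case-specific algebra. I expect the genuine difficulty to be uniformity — ensuring that in every one of the four cases the resulting $G_P$ really does have degree (or intersection behaviour) small enough to invoke Lemma~\ref{simp}(b), rather than landing on an irreducible \emph{cubic}, which by Lemma~\ref{simp}(a) would give the opposite conclusion (nonclassicality). Verifying that none of these four divisibility conditions produces such an osculating cubic — i.e.\ that they are genuinely the ``classical'' factors, complementary to the nonclassical conditions of Theorem~\ref{frobenius} — is the crux of the argument.
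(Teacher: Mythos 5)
Your proposal correctly identifies the framework (Lemma \ref{simp}(b) and Remark \ref{lemalt}, a four-case split along the factors of $(n+2)(2n+1)(2n-3)(3n-2)$, and the need for a relation $G(x,y)=0$ with $p$-th power coefficients), but it stops at a plan where the actual mathematical content should be. The heart of the paper's proof is the explicit construction of these relations, and it does \emph{not} come from Hasse-derivative or Wronskian bookkeeping as you propose; it comes from elementary manipulation of the Fermat equation itself. Concretely: when $p\mid n+2$ one multiplies $x^n+y^n-1=0$ by $x^2y^2$ so that $x^{n+2}=(x^m)^{p^r}$ appears; when $p\mid 2n+1$ or $p\mid 2n-3$ one multiplies $x^n+y^n-1$ by the conjugate expression $(x^n+y^n+1)\bigl((x^n-y^n)^2-1\bigr)$ to get $x^{4n}-2x^{2n}y^{2n}-2x^{2n}+y^{4n}-2y^{2n}+1=0$ and then by a suitable monomial; when $p\mid 3n-2$ one cubes the equation to reach $-27x^{3n}y^{3n}=(x^{3n}+y^{3n}-1)^3$. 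Without producing these identities (or some substitute), nothing is proved; your ``isolate the term where the hypothesis forces the exponent pattern to become a multiple of $p$'' is exactly the step left undone, and it is far from routine that derivative bookkeeping would ever deliver it.

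There is also a concrete point where your degree heuristics would fail. The four relations yield curves $\G_1,\G_2$ of degree $4$ and $\G_3,\G_4$ of degree $6$ (not ``$3$ or $4$'' as you anticipate), and irreducibility of each must be proved separately (the paper's Lemma \ref{polirr}). For the degree-$6$ cases, Remark \ref{lemalt} requires $6<p/3$, i.e.\ $p\geq 19$, so your argument breaks down precisely at $p\in\{13,17\}$, which the statement must cover since it assumes only $p>11$. The paper handles these two primes by a refined argument: it exhibits (Lemma \ref{poliosc}) an explicit nonsingular point $\tilde{P_i}$ on each $\tilde{\G_i}$ and an explicit irreducible cubic $\tilde{\mathcal{C}_i}$ with $I(\tilde{P_i},\tilde{\G_i}\cap\tilde{\mathcal{C}_i})\in\{10,12\}$, and then uses Lemma \ref{trans} plus B\'ezout to show $I(P_i,\G_i\cap\mathcal{C})\leq 12<p$ for \emph{every} cubic $\mathcal{C}$, so that Lemma \ref{simp}(b) applies. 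Your proposal contains no mechanism for this boundary case, so even granting the construction of the relations, the proof would be incomplete for $p=13$ and $p=17$.
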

\begin{proof} We first prove the result for $p>17$.

Suppose  $p|n+2$, and  let $m,r>0$ be  integers such that  $n=mp^r-2$ and $p\nmid m$. It follows from $x^n+y^n=1$ that $(x^n+y^n-1)x^2y^2 = 0$, and then
\begin{eqnarray}
(x^{mp^r})y^2+(y^{mp^r})x^2-x^2y^2 = 0 . \nonumber
\end{eqnarray}
Consider $P=(u:v:1) \in \ff$, with $uv \neq 0$, and set $\alpha=v^{mp^r}$ and $\beta=u^{mp^r}$. By Lemma \ref{polirr}, the curve $\G_1:\alpha X^2Z^2+\beta Y^2Z^2 -X^2Y^2=0$ is irreducible. Therefore, Remark \ref{lemalt} implies that $\ff$ is classical w.r.t. $\Sigma_3$.
 To address the cases $p|2n+1$ and $p|2n-3$, note that
 \begin{eqnarray}
x^{n}+y^{n}-1=0 & \Lra & (x^{n}+y^{n}-1)(x^{n}+y^{n}+1)((x^{n}-y^{n})^2-1) =0\nonumber \\ 
                    & \Lra & x^{4n}-2x^{2n}y^{2n}-2x^{2n}+y^{4n}-2y^{2n}+1=0\nonumber        
\end{eqnarray}
  yields
\begin{equation}\label{eq11}
x^{2(2n+1)}y^2-2x^{2n+1}y^{2n+1}xy-2x^{2n+1}xy^2+y^{2(2n+1)}x^2-2y^{2n+1}x^2y+x^2y^2=0 \\
 \end{equation}
and 
\begin{equation}\label{eq12}
x^{2(2n-3)}x^6-2x^{2n-3}y^{2n-3}x^3y^3-2x^{2n-3}x^3+y^{2(2n-3)}y^6-2y^{2n-3}y^3+1=0.
\end{equation}

If $p|2n+1$, we consider  integers  $m,r >0$ such that $2n+1=mp^r$ and $p\nmid m$. Likewise,  we write $2n-3=mp^r$ for  the case $p|2n-3$. Therefore \eqref{eq11} and \eqref{eq12} can be written as 

\begin{equation}\label{eq1}
(x^{2m})^{p^r}y^2-2(x^my^m)^{p^r}xy-2(x^m)^{p^r}xy^2+(y^{2m})^{p^r}x^2-2(y^m)^{p^r}x^2y+x^2y^2=0, \\
 \end{equation}
and 
\begin{equation}\label{eq2}
(x^{2m})^{p^r}x^6+(y^{2m})^{p^r}y^6-2(x^{m})^{p^r}x^3-2(y^{m})^{p^r}y^3-2(x^{m}y^m)^{p^r}x^3y^3+1=0,
\end{equation}
respectively. In either case, we consider  $P=(u:v:1) \in \ff$ such that $uv \neq 0$ and define $\alpha=u^{mp^r}$ and $\beta=v^{mp^r}$. The  above equations give rise to the curves
\begin{equation}\nonumber
\G_2:\alpha^2Y^2Z^2-2\alpha\beta XYZ^2-2\alpha XY^2Z+\beta^2X^2Z^2-2\beta X^2YZ+X^2Y^2=0.
\end{equation}
and 
\begin{equation}\nonumber
\G_3: \alpha^2X^6+\beta^2Y^6+Z^6-2(\alpha X^3Z^3+\beta Y^3Z^3 +\alpha \beta X^3Y^3)=0.
\end{equation}
After scaling coordinates, it follows from  Lemma \ref{polirr} that  these curves  are irreducible. Thus Remark \ref{lemalt} implies that  $\ff$ is classical w.r.t. $\Sigma_3$.

Finally, let us assume $p|3n-2$ and consider integers $m,r>0$ such that $3n=mp^r+2$ and $p \nmid m$. Thus
\begin{eqnarray}
1=x^n+y^n & \Lra & 1=(x^n+y^n)^3 \Lra 1=x^{3n}+y^{3n}+3x^ny^n \nonumber \\ 
          & \Lra & -27x^{3n}y^{3n}= (x^{3n}+y^{3n}-1)^3 \nonumber \\
            & \Lra & -27(x^{m}y^{m})^{p^r}x^2y^2=  ((x^{m})^{p^r}x^2+(y^{m})^{p^r}y^2-1)^3. \nonumber 
\end{eqnarray}

Similarly to the previous cases, the latter equation gives rise to an irreducible curve (cf. Lemma \ref{polirr} )
\begin{eqnarray}
\G_4: (\alpha X^2+ \beta Y^2-Z^2)^3+27\alpha \beta X^2Y^2Z^2=0, \nonumber
\end{eqnarray}
 and  then Remark \ref{lemalt}  finishes the proof.

In all prior cases, since $p>17$ and  $\deg({\mathcal{G}_i})<p/3$  for each $i=1,\dots,4$,  Remark \ref{lemalt} is sufficient to prove the classicality $\mathcal{F}$ with respect to $\Sigma_3$. To address the cases $p\in \{13,17\}$, the previous argument is slightly refined: note that using a suitable projective tranformation $(X:Y:Z)\mapsto (\lambda X:\lambda Y:Z)$, we can  always choose  a point $P_i=(u:u:1) \in \mathcal{F}$  and a cubic $\mathcal{C}_i$ such that 
 $$I(P_i,\G_i \cap \mathcal{C}_i)=I(\tilde{P_i},\tilde{\G_i} \cap \tilde{\mathcal{C}_i})\in \{10,12\},$$
  where $\tilde{\G_i}, \tilde{\mathcal{C}_i}$ and  $\tilde{P_i}$ are given by Lemma \ref{poliosc}. Since $\tilde{P_i} \in \tilde{\mathcal{G}_i}$ is a nonsingular point,  then so is $P_i\in \G_i$. Now if there is another cubic $\mathcal{C}$ such that $I(P_i,\G_i \cap \mathcal{C})\geq 10$, the by Lemma \ref{trans},  $I(P_i,\mathcal{C}_i \cap \mathcal{C})\geq 10$. This  contradicts Bezout's Theorem as  $\mathcal{C}_i\cong \tilde{C_i} $ is irreducible. Therefore, 
  $$I(P_i,\G_i \cap \mathcal{C})\leq 12<p$$
for all cubics $\mathcal{C}$, and then Lemma \ref{simp}(b)  gives the result.

\end{proof}

\begin{prop}\label{p div (n-3)(3n-1)}
If  $p>7$ divides  $(n-3)(3n-1)$, then $\ff:aX^n+bY^n=Z^n$ is nonclassical with respect to $\Sigma_3$. 
Moreover, for $P=(u:v:1) \in \ff$, $uv \neq 0$,  the osculating cubic $\hh_P$ to $\ff$ at $P$ is the irreducible curve $H_P(X,Y,Z)=0$, where
\begin{equation*}
H_P(X,Y,Z)= \begin{cases} 
au^{n-3}X^{3}+bv^{n-3}Y^{3}-Z^3, & \text{ if }\quad p \mid n-3\\
(a^3u^{3n-1}X+b^3v^{3n-1}Y-Z)^3+27a^3b^3(uv)^{3n-1}XYZ, & \text{ if }\quad p \mid 3n-1.
\end{cases} 
  \end{equation*}

\end{prop}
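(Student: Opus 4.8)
The plan is to realize each of the two claimed cubics as the specialization $G_P$ of a suitable global relation on $\ff$ and then to invoke Lemma \ref{simp}(a). Concretely, starting from the affine equation $ax^n+by^n=1$ of $\fqc(x,y)$, I would, in each divisibility case, manufacture a polynomial $G(X,Y)=\sum a_{ij}(x,y)^pX^iY^j$ of degree exactly $3$ with $G(x,y)=0$ and with every coefficient a genuine $p$-th power in $\fqc[x,y]$. Since $\fqc$ is algebraically closed, every constant is already a $p$-th power, so the only real bookkeeping is to factor the $x$- and $y$-exponents through $p$; the specialization $G_P$ is then obtained by replacing $x,y$ by $u,v$ inside the $p$-th powers, and I must check it coincides with the stated $H_P$ on the chart $Z=1$.

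In the case $p\mid n-3$ I would set $s=(n-3)/p$, a positive integer since $n>3$, and rewrite $ax^n+by^n=1$ as $a(x^s)^px^3+b(y^s)^py^3-1=0$. Taking $G(X,Y)=a(x^s)^pX^3+b(y^s)^pY^3-1$, whose three coefficients are $(a^{1/p}x^s)^p$, $(b^{1/p}y^s)^p$ and $(-1)^p$, the specialization at $P=(u:v:1)$ is $G_P(X,Y)=au^{n-3}X^3+bv^{n-3}Y^3-1$, which homogenizes to the first displayed $H_P$. In the case $p\mid 3n-1$ I would first cube the defining relation: writing $\sigma=ax^n$, $\tau=by^n$, the identity $\sigma+\tau=1$ gives $\sigma^3+\tau^3+3\sigma\tau=1$, hence $(\sigma^3+\tau^3-1)^3=-27\sigma^3\tau^3$, i.e. $(a^3x^{3n}+b^3y^{3n}-1)^3+27a^3b^3x^{3n}y^{3n}=0$. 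Putting $t=(3n-1)/p$ and using $x^{3n}=x\,(x^t)^p$ and $y^{3n}=y\,(y^t)^p$, this becomes $G(x,y)=0$ for
$$
G(X,Y)=\bigl(a^3(x^t)^pX+b^3(y^t)^pY-1\bigr)^3+27a^3b^3(x^t)^p(y^t)^pXY;
$$
after expansion every monomial coefficient is a constant times $(x^t)^{ip}(y^t)^{jp}$, hence a $p$-th power, and specializing at $P=(u:v:1)$ replaces $(x^t)^p,(y^t)^p$ by $u^{3n-1},v^{3n-1}$, giving the second displayed $H_P$.

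In both cases $\deg G_P=3$, since $uv\neq0$ and $a,b\neq0$ force the leading cubic part to be nonzero, and Lemma \ref{polirr} supplies the irreducibility of $G_P$; in the second case $G_P$ is an irreducible nodal cubic, as one sees after the invertible substitution $\xi=a^3u^{3n-1}X$, $\eta=b^3v^{3n-1}Y$ reducing it to the normal form $(\xi+\eta-1)^3+27\xi\eta$. Lemma \ref{simp}(a) then delivers, at the generic point, both the nonclassicality of $\ff$ with respect to $\Sigma_3$ and the identification $\G_P=\hh_P$. For the ``moreover'' part I would observe that the computation of $G_P$ and the B\'ezout argument inside Lemma \ref{simp}(a) apply verbatim at \emph{every} $P=(u:v:1)$ with $uv\neq0$: such points are nonsingular on $\ff$ because $p\nmid n$ (if $p\mid n$ then $p\mid 3$ or $p\mid 1$, impossible for $p>7$), and $v_P(G_P(x,y))\geq p$ together with $\deg G_P=3$ and $p>9$ forces $\hh_P=\G_P$ via Lemma \ref{trans} and B\'ezout. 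Thus the formula holds for all the asserted points.

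The step I expect to require the most care is the case $p\mid 3n-1$: one must both discover the correct global relation — the cubing of $ax^n+by^n=1$ combined with the symmetric-function identity $(\sigma^3+\tau^3-1)^3=-27\sigma^3\tau^3$ — and then verify, after the substitution $x^{3n}=x(x^t)^p$, that \emph{all} expanded coefficients are $p$-th powers, so that the hypotheses of Lemma \ref{simp} are genuinely met. Establishing the irreducibility of the resulting singular cubic is the other delicate point, although here it is delegated to Lemma \ref{polirr}.
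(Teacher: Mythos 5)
Your proposal is correct and follows essentially the same route as the paper: in each case it builds the same global relation (the defining equation rewritten with a $p$-th power factor when $p\mid n-3$, and the cubed identity $(a^3x^{3n}+b^3y^{3n}-1)^3=-27a^3b^3x^{3n}y^{3n}$ when $p\mid 3n-1$), specializes it at $P$, and applies Lemma \ref{simp}(a) together with the irreducibility input from Lemma \ref{polirr}. The only cosmetic differences are that the paper verifies irreducibility in the second case by citing $\tilde{\G}_4$ rather than your direct nodal-cubic normal form, and that you spell out the extension from the generic point to all $P=(u:v:1)$ with $uv\neq 0$, which the paper leaves implicit.
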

\begin{proof}
Suppose $p|n-3$, and let $m,r>0$ be  integers such that $n=mp^r+3$ and $p\nmid m$. Note that  for
$G(X,Y)=ax^{mp^r}X^{3}+by^{mp^r}Y^{3}-1$, we have $G(x,y)=0$. Since $G_P(X,Y):=au^{mp^r}X^{3}+bv^{mp^r}Y^{3}-1$ is  irreducible  of degree $3$, Lemma \ref{simp}(a) implies that  $\ff$ is nonclassical with respect to $\Sigma_3$ and $H_P(X,Y,Z)=0$ is the osculating cubic to $\ff$ at $P$.

For the case  $p|3n-1$,
note that $ax^n+by^n=1$ implies
\begin{eqnarray}\label{osc ge}
(ax^n+by^n)^3 &=& 1 \Lra\nonumber \\
a^3x^{3n}+b^3y^{3n}+3abx^ny^n &=&  1\Lra \nonumber \\
(a^3(x^m)^{p^r}x+b^3(y^m)^{p^r}y-1)^3 &=& -27a^3b^3(x^my^m)^{p^r}xy,
\end{eqnarray}
where $m,r>0$ are integers such that $3n=mp^r+1$ and $p \nmid m$. That is, for $G(X,Y):=(a^3(x^m)^{p^r}X+b^3(y^m)^{p^r}Y-1)^3+27a^3b^3(x^my^m)^{p^r}XY$, we have $G(x,y)=0$.
 The irreducibility of $G_P(X,Y):=(a^3u^{3n-1}X+b^3v^{3n-1}Y-1)^3+27a^3b^3(uv)^{3n-1}XY$ follows from that of $\tilde{\G}_4$ in Lemma \ref{polirr}. 
Therefore, Lemma \ref{simp}(a) gives the result.

\end{proof}

Next we present the main result of this section.

\begin{thm}\label{classicalidade}
If $p>11$ then $\ff:X^n+Y^n=Z^n$ is nonclassical with respect to $\Sigma_3$ if and only if $$p \text{ divides }(n-2)(n-1)(n+1)(2n-1)(n-3)(3n-1).$$
\end{thm}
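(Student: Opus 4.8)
The statement is an equivalence, and the implication $(\Leftarrow)$ is immediate from the material already in place: if $p$ divides $(n-2)(n-1)(n+1)(2n-1)$ then Theorem~\ref{gv nc} makes $\ff$ nonclassical with respect to $\Sigma_2$, and Lemma~\ref{con imp cub} (valid since $p>7$) promotes this to nonclassicality with respect to $\Sigma_3$; if instead $p$ divides $(n-3)(3n-1)$ then Proposition~\ref{p div (n-3)(3n-1)} gives the conclusion directly. So the whole problem lies in the forward implication, which I would prove by cases.

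Suppose $\ff$ is nonclassical with respect to $\Sigma_3$. If $\ff$ is already nonclassical with respect to $\Sigma_1$ or $\Sigma_2$, then, since nonclassicality w.r.t.\ lines forces nonclassicality w.r.t.\ conics (the remark preceding Lemma~\ref{con imp cub}), Theorem~\ref{gv nc} yields $p\mid(n-2)(n-1)(n+1)(2n-1)$ and we are done. The essential case is therefore: $\ff$ classical w.r.t.\ $\Sigma_1$ and $\Sigma_2$ but nonclassical w.r.t.\ $\Sigma_3$; here I must produce $p\mid(n-3)(3n-1)$. First I would pin down the $\Sigma_3$-order sequence $(\varepsilon_0,\dots,\varepsilon_9)$. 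Classicality w.r.t.\ $\Sigma_2$ makes $0,1,\dots,5$ into $\Sigma_3$-orders (attach to each osculating conic a line avoiding $P$), and attaching a transversal, resp.\ the tangent, line through $P$ adds $6$ and $7$; hence $\varepsilon_i=i$ for $i\le 7$. Since $p>11>9=M$, Theorem~\ref{cor1.9} forces $\varepsilon_9\ge p$ (otherwise $0,1,\dots,\varepsilon_9-1$ would already be more than ten orders). To see $\varepsilon_8=8$ as well, note that if $\varepsilon_8\ge p$ then the two distinct cubics realizing $\varepsilon_8$ and $\varepsilon_9$ both meet $\ff$ at $P$ with multiplicity $\ge p$; by Lemma~\ref{trans} and B\'ezout they share a component, which forces either a line or a conic meeting $\ff$ at the generic point $P$ with multiplicity $\ge p-2>5$, contradicting $\Sigma_1$- and $\Sigma_2$-classicality. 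Thus $\varepsilon_8=8$, and Proposition~\ref{pwr} gives $\varepsilon_9=p^{\rho}$. In particular the osculating cubic $\hh_P$ satisfies $I(P,\ff\cap\hh_P)=p^{\rho}\ge p$ for generic $P$.

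The core of the argument is to turn this high contact into an algebraic identity of the kind analysed in Lemma~\ref{simp}. The sequence $(0,1,\dots,8,p^{\rho})$ means the row $D_t^{(p^{\rho})}$ of the cubic Wronskian lies in the span of $D_t^{(0)},\dots,D_t^{(8)}$; by the behaviour of Hasse derivatives this forces the osculating-hyperplane coordinates (the maximal minors of the truncated Wronskian) to be $p^{\rho}$-th powers in $\fqc(\ff)$, i.e.\ $\hh_P$ is the specialization at $P$ of a cubic $G(X,Y)=\sum_{i+j\le 3}a_{ij}(x,y)^{p^{\rho}}X^iY^j$ with $G(x,y)=0$. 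I would then enumerate the degree-three identities of this type supported on $\ff$: substituting $(x^n+y^n)^k=1$ for $k=1,2,3$ (multiplied when needed by a monomial $x^cy^d$) and demanding that every exponent outside the degree-$\le 3$ part be divisible by $p^{\rho}$, the residues of $n,2n,3n$ modulo $p^{\rho}$ that yield a genuine \emph{cubic} relation are exactly $p\mid(n-3)$ and $p\mid(3n-1)$ (these are the two osculating cubics of Proposition~\ref{p div (n-3)(3n-1)}). The adjacent residues produce structured curves of degree $\ge4$---precisely the $\G_i$ attached to $p\mid(n+2)(2n+1)(2n-3)(3n-2)$---or of degree $\le2$, the latter being the conic conditions already excluded by $\Sigma_2$-classicality. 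Since Lemma~\ref{classic} shows that each of $p\mid(n+2),(2n+1),(2n-3),(3n-2)$ in fact makes $\ff$ classical with respect to $\Sigma_3$, contradicting our assumption, the only survivors are $p\mid(n-3)$ and $p\mid(3n-1)$, which completes the forward implication and hence the theorem.

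The step I expect to be genuinely delicate is the converse of Lemma~\ref{simp}(a): the passage from ``$I(P,\ff\cap\hh_P)\ge p$ generically'' to ``$\hh_P$ is the specialization of a $p^{\rho}$-power-structured cubic $G$''. One must show that the minimality of the order sequence $(0,\dots,8,p^{\rho})$ makes the osculating-hyperplane map factor through the $\rho$-fold Frobenius, so that the cofactors really are $p^{\rho}$-th powers. A companion subtlety is the \emph{exhaustiveness} of the enumeration; the safe way to secure it is to observe that any such $G$ lies in the $\fqc(x^{p^{\rho}},y^{p^{\rho}})$-span of the ten monomials $x^iy^j$ with $i+j\le 3$, and that the only linear relations among these monomials over $\fqc(x^{p^{\rho}},y^{p^{\rho}})$ come from reducing the powers $(x^n+y^n)^k=1$ modulo $p^{\rho}$---which is exactly the residue bookkeeping above.
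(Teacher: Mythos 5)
Your backward implication coincides with the paper's and is correct, and your determination of the generic $\Sigma_3$-order sequence $(0,1,\ldots,8,p^{\rho})$ in the essential case (classical for $\Sigma_1,\Sigma_2$, nonclassical for $\Sigma_3$) is also sound --- indeed a reasonable variant of Lemma \ref{ospart}(a), replacing the irreducibility of the osculating cubic (unavailable to you, since that is what Proposition \ref{p div (n-3)(3n-1)} supplies under the hypothesis you are trying to \emph{derive}) by the observation that a common component of two distinct cubics has degree $\le 2$ and so cannot absorb contact of order $\ge p-5$ at a generic point. The genuine gap is in the core of the forward implication, at exactly the two points you flag yourself. First, the ``structure'' step: from the order sequence $(0,\ldots,8,p^{\rho})$ you assert that the osculating cubic at a generic point arises from a relation $G(X,Y)=\sum a_{ij}(x,y)^{p^{\rho}}X^iY^j$ with $G(x,y)=0$, i.e.\ that the ten monomials $x^iy^j$, $i+j\le 3$, are linearly dependent over $\fqc(\ff)^{p^{\rho}}$. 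This is not the converse of Lemma \ref{simp}(a) in any formal sense, it is proved nowhere in the paper or in the results it quotes, and it is a substantive structure theorem about nonclassical linear systems: the fact that the row $D_t^{(p^{\rho})}$ of the Wronskian matrix lies in the span of the rows $D_t^{(0)},\ldots,D_t^{(8)}$ does not by itself say that the coefficients of the osculating cubic are $p^{\rho}$-th powers. Second, the exhaustiveness of your enumeration: your ``safe way to secure it'' --- that every $\fqc(x^{p^{\rho}},y^{p^{\rho}})$-linear relation among those monomials comes from reducing $(x^n+y^n)^k=1$, $k=1,2,3$ --- is precisely the claim to be proved, restated; the space of such relations inside the degree-$p^{\rho}$ extension $\fqc(\ff)\supset\fqc(\ff)^{p^{\rho}}$ is never computed. (Your closing appeal to Lemma \ref{classic} is also logically misplaced: in your scheme, residues producing relations of degree $\ge 4$ simply fail to yield cubic relations and are excluded by the enumeration itself, not by a separate classicality argument.)

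What you missed is that the paper never works at a generic point in the forward direction and needs no structure theorem at all. It evaluates at an inflection point $P=(u:0:1)$, where $I(P,\ff\cap\ell_P)=n$ gives the $(\Sigma_1,P)$-order sequence $(0,1,n)$ and hence the explicit $(\Sigma_3,P)$-order sequence $(0,1,2,3,n,n+1,n+2,2n,2n+1,3n)$. Proposition \ref{SV1.7}, in contrapositive form, then forces
$$p \mid (n-2)(n-1)(n+1)(2n-1)(3n-1)(n-3)(3n-2)(2n+1)(2n-3)(n+2),$$
and Lemma \ref{classic} strikes out the factors $(3n-2)(2n+1)(2n-3)(n+2)$, since divisibility by any of them would make $\ff$ classical with respect to $\Sigma_3$, contrary to hypothesis. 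Thus all the hard work is concentrated in Lemma \ref{classic}, which is already proved, and the remainder of the forward direction is a few lines. To salvage your route you would have to prove both the Frobenius-power structure of the osculating cubic and the classification of relations over $\fqc(\ff)^{p^{\rho}}$, each of which is at least as hard as the theorem itself.
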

\begin{proof}
Suppose that $\ff$ is nonclassical with respect to $\Sigma_3$. If  $P=(u:0:1) \in \ff(\fqc)$, and $\ell_P$ is tangent line to $\ff$ at $P$, then clearly $I(P,\ff \cap \ell_P)=n$. Therefore, the $(\Sigma_1,P)$-order sequence is $(0,1,n)$, and then the $(\Sigma_3,P)$-order sequence is
$
(0,1,2,3,n,n+1,n+2,2n,2n+1,3n).
$
Thus Proposition \ref{SV1.7} implies that
$$
p | (n-2)(n-1)(n+1)(2n-1)(3n-1)(n-3)(3n-2)(2n+1)(2n-3)(n+2).
$$
From  Lemma \ref{classic},  we have that $p\nmid  (3n-2)(2n+1)(2n-3)(n+2)$ and the result follows.

Conversely, suppose that $p | (n-2)(n-1)(n+1)(2n-1)(3n-1)(n-3)$. If $p | (n-2)(n-1)(n+1)(2n-1)$, then Theorem \ref{gv nc} implies that $\ff$ is nonclassical w.r.t. $\Sigma_2$, and then $\ff$ is  nonclassical w.r.t. $\Sigma_3$ (cf. Lemma \ref{con imp cub}).  The case $p | (3n-1)(n-3)$  follows from Lemma \ref{p div (n-3)(3n-1)}.
\end{proof}

\begin{rem}
The restriction $p>11$ in Theorem \ref{classicalidade} cannot be dropped. To see this, consider the curve $\ff: X^{n}+Y^{n}=Z^n$ over $\overline{\mathbb{F}}_{11}$ with $n \equiv -2 \mod 11$. For $P=(u:v:1) \in \ff$ such that $uv \neq 0$, let $\G_1:\alpha X^2Z^2+\beta Y^2Z^2 -X^2Y^2=0$ be the irreducible curve as defined  in the proof of Lemma \ref{classic}. It can be checked that $\G_1$ is nonclassical w.r.t. $\Sigma_3$. That is, there exists a cubic $\cc_P$ such that $I(P,\G_1 \cap \cc_P) \geq 11$. Therefore, from Lemma \ref{trans}, $I(P,\ff \cap \cc_P) \geq 11$. In other words,  $11|n+2$,  but the curve $\ff$ is nonclassical w.r.t. $\Sigma_3$.

\end{rem}

\begin{rem}

Assume that $p>M$ is not a divisor of $n-1\geq s$, and consider the   Fermat curve $\ff: x^n+y^n+1=0$.  Since the inflection points of  $\ff$  have  $(\Sigma_1,P)$-order sequence $(0,1,n)$, it follows from Proposition \ref{SV1.7} that if $\ff$ is nonclassical w.r.t. $\Sigma_s$, then
$
p \text{ divides }\prod_{i=1}^{s}\prod_{t=-s}^{s-i}(in+t).
$

In \cite[Remark 5]{GV1}, Garcia and Voloch somewhat raised the question of whether or not the converse of this statement  holds. Theorem \ref{classicalidade} gives a negative answer: if $p|(3n-2)(2n+1)(2n-3)(n+2)$, then $\ff$ is classical w.r.t. $\Sigma_3$. 
\end{rem}


\section{$\fq$-Frobenius classicality of $\ff$ with respect to cubics}

In this section we provide the additional results that will lead us to the proof of Theorem \ref{frobenius}. Henceforth, we consider the irreducible curve $\ff:aX^n+bY^n=Z^n$, where $n>3$ and $p>7$.

\begin{lem}\label{ospart}
If $p$ divides $(n-3)(3n-1)$, then the following holds
\begin{enumerate}[\rm(a)]
\item The order sequence of $\ff$ w.r.t. $\Sigma_3$ is $(0,1,2,3,4,5,6,7,8,p^r),$ for some $r>0$

\item The curve $\ff$ is $\fq$-Frobenius nonclassical w.r.t. $\Sigma_3$ if and only if $\Phi_q(P) \in \hh_P$ for infinitely many points $P \in \ff$. 
\end{enumerate}

\end{lem}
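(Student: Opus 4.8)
The plan is to treat the two parts in sequence, deriving (b) as an immediate consequence of (a) together with Proposition \ref{prop1}.

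For part (a), I would start from Proposition \ref{p div (n-3)(3n-1)}: under the hypothesis $p\mid(n-3)(3n-1)$ the curve $\ff$ is nonclassical with respect to $\Sigma_3$, and at a generic point $P=(u:v:1)$ with $uv\neq 0$ the osculating cubic $\hh_P$ is the explicit curve given there, which is irreducible in both cases. Moreover, inequality \eqref{domi} from Lemma \ref{simp} yields $\varepsilon_9=I(P,\ff\cap\hh_P)\geq p$. By Remark \ref{pd} there are exactly ten orders $\varepsilon_0<\cdots<\varepsilon_9$, so the entire content of (a) is to show that $\varepsilon_i=i$ for $0\leq i\leq 8$ and that $\varepsilon_9$ is a power of $p$.

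The heart of the argument, and the step I expect to be the main obstacle, is ruling out the possibility that a \emph{second} order is large, that is, that $\varepsilon_8\geq p$. I would argue by contradiction using Lemma \ref{trans}. If $\varepsilon_8\geq p$, pick a cubic $\mathcal{C}$ realizing $I(P,\ff\cap\mathcal{C})=\varepsilon_8$. Since $\hh_P$ is irreducible and $\varepsilon_8<\varepsilon_9$, the cubics $\mathcal{C}$ and $\hh_P$ cannot share a component (otherwise $\mathcal{C}=\hh_P$ and $\varepsilon_8=\varepsilon_9$), and neither shares a component with the irreducible curve $\ff$ of degree $n>3$. Applying Lemma \ref{trans} with $C=\ff$, $D_1=\mathcal{C}$ and $D_2=\hh_P$ at the nonsingular point $P\in\ff$ gives
$$
I(\mathcal{C}\cdot\hh_P,P)\geq\min\{\,I(\ff\cdot\mathcal{C},P),\,I(\ff\cdot\hh_P,P)\,\}=\varepsilon_8\geq p>9.
$$
This contradicts B\'ezout's theorem, since two cubics without a common component meet in exactly nine points counted with multiplicity. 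Hence $\varepsilon_8<p$.

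Once $\varepsilon_8<p$ is established, Theorem \ref{cor1.9} forces $0,1,\ldots,\varepsilon_8$ to all be $\Sigma_3$-orders; as these are all orders not exceeding $\varepsilon_8$, while there are only nine orders in total that do not exceed $\varepsilon_8$ (namely $\varepsilon_0,\ldots,\varepsilon_8$), we obtain $\varepsilon_8\leq 8$, and hence $\varepsilon_i=i$ for $0\leq i\leq 8$. Then Proposition \ref{pwr}, applicable since $M=9\leq p$ and $\varepsilon_9\geq p>9=M$, shows that $\varepsilon_9=p^r$ for some $r>0$, which proves (a). Finally, part (b) is immediate: (a) produces an order sequence with $\varepsilon_i=i$ for $i=0,\ldots,M-1$, so the hypothesis of Proposition \ref{prop1} is met and that proposition yields precisely the stated equivalence between $\fq$-Frobenius nonclassicality of $\ff$ with respect to $\Sigma_3$ and the condition $\Phi_q(P)\in\hh_P$ for infinitely many $P\in\ff$.
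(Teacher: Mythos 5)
Your proof is correct and takes essentially the same approach as the paper: the central step---ruling out $\varepsilon_8 \geq p$ by applying Lemma \ref{trans} to $\ff$, the cubic $\mathcal{C}$, and the osculating cubic $\hh_P$, then contradicting B\'ezout's theorem via the irreducibility of $\hh_P$ from Proposition \ref{p div (n-3)(3n-1)}---is exactly the paper's argument, as are the concluding appeals to Proposition \ref{pwr} for $\varepsilon_9=p^r$ and to Proposition \ref{prop1} for part (b). The only difference is harmless bookkeeping: the paper first derives $\varepsilon_i=i$ for $i\leq 7$ from classicality w.r.t. $\Sigma_1$ and $\Sigma_2$ (Theorem \ref{gv nc}) together with degenerate cubics and then treats $\varepsilon_8$ separately, whereas you recover the entire initial segment $\varepsilon_i=i$, $0\leq i\leq 8$, in one stroke from $\varepsilon_8<p$ and Theorem \ref{cor1.9}.
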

\begin{proof}
Note that $p|(n-3)(3n-1)$ implies $p \nmid(n-1)(n-2)(n+1)(2n-1)$. Thus, from  Theorem \ref{gv nc}, the curve $\ff$ is classical w.r.t. $\Sigma_2$. That is,  $(0,1,2)$ and $(0,1,2,3,4,5)$ are the order sequences of $\ff$ w.r.t. $\Sigma_1$ and $\Sigma_2$, respectively. Then, considering the degenerated cubics, we have that the order sequence of $\ff$ w.r.t. $\Sigma_3$ is
$$
(\varepsilon_0,\ldots,\varepsilon_8, \varepsilon_9)=(0,1,2,3,4,5,6,7,\varepsilon_8, \varepsilon_9),
$$
with $\varepsilon_8 \geq 8$ and $\varepsilon_9 \geq p$ (c.f.  Proposition \ref{p div (n-3)(3n-1)}). Suppose that $\varepsilon_8 > 8$. From Theorem \ref{cor1.9}, we have that $\varepsilon_8 \geq p$. Let $P \in \ff$ be a $\Sigma_3$-ordinary point, that is, $P$ is such that $j_i(P)=\varepsilon_i$ for all $i \in \{0,\ldots,9\}$. Let $\cc_P$ be a cubic for which $I(P,\ff \cap \cc_P)=\varepsilon_8$ and let $\hh_P$ be the osculating cubic to $\ff$ at $P$. Note that $\hh_P \neq \cc_P$. Lemma \ref{trans} implies that 
$$
I(P,\hh_P \cap \cc_P) \geq p>9=\deg(\hh_P)\cdot\deg(\cc_P).
$$
Thus by Bezout's Theorem the curves $\hh_P$ and $\cc_P$ have a common component. However, from Proposition \ref{p div (n-3)(3n-1)}, the osculating cubic $\hh_P$ is irreducible. Therefore, $\hh_P = \cc_P$, a contradiction. Hence $\varepsilon_8=8$. Now it follows  from Proposition \ref{pwr} that $\varepsilon_9=p^r$ for some $r>0$.
The second assertion follows directly from the first one together with Proposition \ref{prop1}.
\end{proof}

The next result follows from \cite[Theorem 3.2]{FM}.

\begin{lem}\label{pol}
Let $K$ be an arbitrary field. Consider nonconstant polynomials $b_1(x), b_2(x) \in K[x]$, and let $l$ and $m$ be positive integers. Then $$y^l-b_1(x) \textrm{ divides } y^m-b_2(x)$$ if  and only if $l|m$ and $b_2(x)=b_1(x)^{\frac{m}{l}}$.
\end{lem}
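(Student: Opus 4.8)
The plan is to bypass the cited reference \cite{FM} and give a direct argument via polynomial division, exploiting that the divisor $y^l-b_1(x)$ is monic in $y$. I would regard both polynomials as elements of $K[x][y]$, that is, as polynomials in the variable $y$ with coefficients in the ring $K[x]$. Since $y^l-b_1(x)$ is monic in $y$, the usual division algorithm applies and produces a unique quotient and remainder with coefficients in $K[x]$, the remainder having $y$-degree strictly less than $l$; divisibility is then equivalent to the remainder being the zero polynomial.

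For the easy implication, suppose $l\mid m$ and write $m=lk$ with $b_2=b_1^{k}$. Then $y^m-b_2=(y^l)^k-b_1^{\,k}$, and the standard factorization $A^k-B^k=(A-B)(A^{k-1}+\cdots+B^{k-1})$, taken with $A=y^l$ and $B=b_1(x)$, exhibits $y^l-b_1(x)$ as a factor of $y^m-b_2(x)$.

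For the converse, I would reduce $y^m$ modulo $y^l-b_1(x)$. Writing $m=ql+s$ with $0\le s<l$ and using $y^l\equiv b_1(x)$, one obtains $y^m\equiv b_1(x)^q y^s$, so the remainder of $y^m-b_2(x)$ upon division by $y^l-b_1(x)$ is exactly $b_1(x)^q y^s-b_2(x)$, a polynomial of $y$-degree $s<l$. Hence $y^l-b_1(x)$ divides $y^m-b_2(x)$ if and only if this remainder vanishes identically in $K[x][y]$. Since $b_1(x)$ is nonconstant, hence nonzero, the coefficient $b_1(x)^q$ of $y^s$ is nonzero; therefore the remainder can vanish only if $s=0$, which is precisely the condition $l\mid m$. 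Once $s=0$ and $q=m/l$, the remainder reduces to $b_1(x)^{m/l}-b_2(x)$, which vanishes exactly when $b_2(x)=b_1(x)^{m/l}$, as claimed.

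The only point requiring care is the legitimacy of the division algorithm over the coefficient ring $K[x]$, which is not a field; this is valid precisely because the divisor is monic in $y$, so the quotient and remainder stay in $K[x][y]$ and no characteristic or root-multiplicity subtleties (which would plague an approach through the roots of $y^l-b_1(x)$ in a splitting field, especially when $p\mid l$) ever intervene. I do not expect a genuine obstacle here; it is worth remarking that this elementary argument uses only the nonvanishing of $b_1(x)$ and never invokes the hypothesis that $b_2$ be nonconstant.
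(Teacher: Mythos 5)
Your proof is correct, but it takes a genuinely different route from the paper's. The paper offers no argument of its own: it deduces the lemma immediately from the Fried--MacRae theorem on polynomials with separated variables (\cite[Theorem 3.2]{FM}), which characterizes divisibilities of the form $f(y)-g(x)\mid F(y)-G(x)$ in $K[x,y]$ by the existence of a common compositional factor, i.e.\ $F=h\circ f$ and $G=h\circ g$ for a single $h\in K[t]$; specialized to $f(y)=y^l$, $F(y)=y^m$, $g=b_1$, $G=b_2$, the identity $y^m=h(y^l)$ forces $h(t)=t^{m/l}$ with $l\mid m$, whence $b_2=b_1^{m/l}$. What the citation buys is brevity and a placement of the lemma inside a general theory of separated-variable curves; what your argument buys is self-containedness and economy of hypotheses. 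Your key step is sound: since $y^l-b_1(x)$ is monic in $y$, division in $K[x][y]$ with unique quotient and remainder is legitimate, the remainder of $y^m-b_2(x)$ is $b_1(x)^q y^s-b_2(x)$ with $m=ql+s$, $0\le s<l$, and because $K[x]$ is an integral domain and $b_1\neq 0$, this vanishes exactly when $s=0$ and $b_2=b_1^{m/l}$. Your closing observation is also accurate and slightly sharpens the statement: only $b_1\neq 0$ is ever used, so the hypothesis that $b_2$ be nonconstant is superfluous in your proof, whereas the Fried--MacRae theorem is stated for nonconstant polynomials on both sides.
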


\begin{prop}\label{frob n-3}
Suppose that $p$ divides $n-3$. The curve $\ff$ is $\fq$-Frobenius nonclassical with respect to $\Sigma_3$ if and only if
$$
n=\frac{3(p^h-1)}{p^r-1}
$$
for some $r<h$ such that  $r|h$, and $a,b \in \F_{p^r}$.
\end{prop}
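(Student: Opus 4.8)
The plan is to apply the geometric criterion of Lemma~\ref{ospart}(b) and convert $\fq$-Frobenius nonclassicality into a single polynomial identity in the function field, which can then be dissected with Lemma~\ref{pol}. First I would recall from Proposition~\ref{p div (n-3)(3n-1)} that, since $p\mid n-3$, the osculating cubic at a point $P=(u:v:1)\in\ff$ with $uv\neq 0$ is $\hh_P:\,au^{n-3}X^{3}+bv^{n-3}Y^{3}-Z^{3}=0$. As $\Phi_q(P)=(u^{q}:v^{q}:1)$, the incidence $\Phi_q(P)\in\hh_P$ reads $au^{m}+bv^{m}=1$, where $m:=n+3(q-1)$. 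By Lemma~\ref{ospart}(b), $\ff$ is $\fq$-Frobenius nonclassical with respect to $\Sigma_3$ if and only if this holds for infinitely many $P$; since $\ff$ is irreducible and a nonzero element of $\fqc(\ff)$ has only finitely many zeros, this is equivalent to the identity $ax^{m}+by^{m}=1$ in $\fqc(x,y)$.

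For the forward implication I would combine this identity with the defining relation $ax^{n}+by^{n}=1$. The element $y$ is then a root over $\fqc(x)$ of both $Y^{n}-(1-ax^{n})/b$ and $Y^{m}-(1-ax^{m})/b$. The first polynomial is irreducible, because $\ff$ is irreducible, and so it is the minimal polynomial of $y$ and must divide the second. Applying Lemma~\ref{pol} with $K=\fqc$, $b_{1}(x)=(1-ax^{n})/b$, $b_{2}(x)=(1-ax^{m})/b$ (both nonconstant in $x$) yields $n\mid m$ and, writing $t:=m/n$, the polynomial relation $b^{\,t-1}(1-ax^{m})=(1-ax^{n})^{t}$.

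The crux is to show that $t$ is a power of $p$. The left-hand side is a binomial in $x$ (only the terms in $x^{0}$ and $x^{m}$ occur), so every intermediate coefficient $\binom{t}{i}$ with $0<i<t$ must vanish modulo $p$; by Lucas' theorem this forces $t=p^{r}$ for some $r\geq 0$. With $t=p^{r}$ one has $(1-ax^{n})^{p^{r}}=1-a^{p^{r}}x^{m}$, and matching this against $b^{\,p^{r}-1}-b^{\,p^{r}-1}ax^{m}$ gives $b^{\,p^{r}-1}=1$ and $a^{p^{r}}=a$, i.e. $a,b\in\F_{p^r}$. Finally $m=np^{r}$ rewrites as $n=3(p^{h}-1)/(p^{r}-1)$; since $t=1$ would force $3(q-1)=0$ we get $r\geq 1$, and $n>3$ forces $p^{h}>p^{r}$, hence $r<h$. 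The divisibility $r\mid h$ follows from a short estimate: writing $h=cr+s$ with $0\le s<r$ gives $p^{h}-1\equiv p^{s}-1\pmod{p^{r}-1}$, so $(p^{r}-1)\mid 3(p^{h}-1)$ reduces to $(p^{r}-1)\mid 3(p^{s}-1)$, which for $p>11$ is impossible unless $s=0$.

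The converse is immediate: if $n=3(p^{h}-1)/(p^{r}-1)$ with $r\mid h$, $r<h$ and $a,b\in\F_{p^r}$, then $m=n+3(q-1)=np^{r}$, and since $a^{p^{r}}=a$ and $b^{p^{r}}=b$ we get $ax^{m}+by^{m}=(ax^{n}+by^{n})^{p^{r}}=1$, so $\Phi_q(P)\in\hh_P$ for all $P$ with $uv\neq 0$, and Lemma~\ref{ospart}(b) concludes. I expect the main obstacle to be the passage from the divisibility of minimal polynomials to the clean relation through Lemma~\ref{pol} (checking that its hypotheses, nonconstant $b_{i}(x)$ and genuine minimality, are met) together with the binomial-coefficient argument pinning $t$ to a power of $p$; the concluding number-theoretic step isolating $r\mid h$ is precisely where the hypothesis $p>11$ is genuinely needed.
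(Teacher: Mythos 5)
Your proposal is correct and follows essentially the same route as the paper's proof: Lemma~\ref{ospart}(b) plus Proposition~\ref{p div (n-3)(3n-1)} reduce Frobenius nonclassicality to the vanishing of $ax^{n+3(q-1)}+by^{n+3(q-1)}-1$, then the Fried--MacRae result (Lemma~\ref{pol}) yields $n\mid n+3(q-1)$ and the binomial identity, from which the power-of-$p$ exponent, the shape of $n$, and $a,b\in\F_{p^r}$ all follow. The only difference is that you spell out the steps the paper labels ``clearly'' (the Lucas-type argument forcing $t=p^r$ and the estimate giving $r\mid h$), which is a matter of detail, not of method.
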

\begin{proof}
By Lemma \ref{ospart} $\ff$ is $\fq$-Frobenius nonclassical w.r.t. $\Sigma_3$ if and only if $\Phi(P) \in \hh_P$ for infinitely many points $P \in \ff$, where $\hh_P$ denotes the osculating cubic to $\ff$ at $P$. Thus  Proposition \ref{p div (n-3)(3n-1)} implies that this is equivalent to
the function  
$$
ax^{n-3+3q}+by^{n-3+3q}-1
$$
being vanishing. 

Therefore,  seeing the functions  as  polynomials,  $\ff$ is $\fq$-Frobenius nonclassical if and only if 
$$
y^n-\left(\frac{1}{b}-\frac{a}{b}x^n\right) \ \textrm{ divides } \ y^{n+3(q-1)}-\left(\frac{1}{b}-\frac{a}{b}x^{n+3(q-1)}\right).
$$
By Lemma \ref{pol}, that means  $n|n+3(q-1)$ and
\begin{equation}\label{aux}
\left(\frac{1}{b}-\frac{a}{b}x^n\right)^{\frac{3(q-1)}{n}+1}=\frac{1}{b}-\frac{a}{b}x^{n+3(q-1)}.
\end{equation}

Clearly equation (\ref{aux}) implies $\frac{3(q-1)}{n}+1=p^r$ for some $r>0$, that is, $p^r-1$ divides $3(p^h-1)$.  Since  $n>3$ and $p>2$, it follows that  $r<h$ and $r|h$. It is also clear that  (\ref{aux})  gives $a,b \in \F_{p^r}$. Conversely,  the latter conditions obviously imply equation (\ref{aux}), which completes the proof.
\end{proof}

\begin{prop}\label{frob 3n-1}
Suppose that $p$ divides $3n-1$. The curve $\ff$ is $\fq$-Frobenius nonclassical with respect to $\Sigma_3$ if and only if 
$$
n=\frac{p^h-1}{3(p^r-1)}
$$ for some $r<h$ such that  $r|h$, and $a^3,b^3 \in \F_{p^r}$.
\end{prop}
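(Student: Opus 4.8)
The plan is to follow the scheme of Proposition~\ref{frob n-3}, the essential new feature being that the osculating cubic is now the cuspidal cubic of Proposition~\ref{p div (n-3)(3n-1)} rather than a diagonal cubic; the key device will be to \emph{linearise} the resulting condition by extracting a cube root inside the function field $\fqc(x,y)$. By Lemma~\ref{ospart}, $\ff$ is $\fq$-Frobenius nonclassical with respect to $\Sigma_3$ if and only if $\Phi_q(P)\in\hh_P$ for infinitely many $P\in\ff$. Substituting $\Phi_q(P)=(x^q:y^q:1)$ into the osculating cubic $H_P$ of Proposition~\ref{p div (n-3)(3n-1)} and passing to the generic point, this is equivalent to
\[
(a^3x^{K}+b^3y^{K}-1)^3+27a^3b^3x^{K}y^{K}=0 \quad\text{in }\fqc(x,y),\qquad K:=3n-1+q.
\]
Writing $g(X,Y)=(a^3X+b^3Y-1)^3+27a^3b^3XY$, the condition reads $g(x^{K},y^{K})=0$.

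I would then exploit the fact that the affine cubic $g=0$ is rational: it is the image of a line under $\sigma\mapsto(\sigma^3/a^3,(1-\sigma)^3/b^3)$, with inverse $\sigma=(2a^3X-b^3Y+1)/(a^3X+b^3Y+2)$ defined over the prime field. Evaluating the inverse at $(X,Y)=(x^{K},y^{K})$ produces a function $\sigma\in\fqc(x,y)$ with $\sigma^{3}=a^3x^{K}$ and $(1-\sigma)^3=b^3y^{K}$ (the denominator $a^3x^{K}+b^3y^{K}+2$ cannot vanish, as otherwise $\sigma$, hence $x^{K}$, would be constant). Since $a^3$ is a cube and $x$ has a simple zero on $\ff$ at the affine points with $x=0$, the relation $\sigma^{3}=a^3x^{K}$ forces $3\mid K$; putting $N:=K/3=n+(q-1)/3$ we obtain $\sigma=\omega^{c}a\,x^{N}$ and $1-\sigma=\omega^{c'}b\,y^{N}$ for cube roots of unity $\omega^{c},\omega^{c'}\in\fqc$, and therefore
\[
\omega^{c}a\,x^{N}+\omega^{c'}b\,y^{N}=1\quad\text{in }\fqc(x,y).
\]

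From here the argument is parallel to Proposition~\ref{frob n-3}. Viewing $\fqc(x,y)=\fqc(x)[y]/(y^{n}-(\tfrac1b-\tfrac ab x^{n}))$, the last identity says that $y^{n}-(\tfrac1b-\tfrac ab x^{n})$ divides $y^{N}-(\tfrac{1}{\omega^{c'}b}-\tfrac{\omega^{c}a}{\omega^{c'}b}x^{N})$, so Lemma~\ref{pol} gives $n\mid N$ (equivalently $3n\mid q-1$) together with $(\tfrac1b-\tfrac ab x^{n})^{N/n}=\tfrac{1}{\omega^{c'}b}-\tfrac{\omega^{c}a}{\omega^{c'}b}x^{N}$. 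As the right-hand side is a two-term binomial, $N/n$ must be a power $p^{r}$ of $p$ with $r\ge1$, whence $(q-1)/3=n(p^{r}-1)$, that is $n=(p^{h}-1)/(3(p^{r}-1))$, and $r<h$, $r\mid h$ follow as before. Comparing coefficients in $(\tfrac1b-\tfrac ab x^{n})^{p^{r}}=\tfrac{1}{b^{p^{r}}}-\tfrac{a^{p^{r}}}{b^{p^{r}}}x^{N}$ yields $b^{p^{r}-1}=\omega^{c'}$ and $a^{p^{r}-1}=\omega^{c}$, both cube roots of unity, hence $a^{3},b^{3}\in\F_{p^{r}}$. For the converse I would reverse these steps: under the stated hypotheses $a^{p^r},b^{p^r}$ are cube-root-of-unity multiples of $a,b$, the relation $y^{N}=(\tfrac1b-\tfrac ab x^{n})^{p^{r}}$ rearranges to $\omega^{c}a\,x^{N}+\omega^{c'}b\,y^{N}=1$, and setting $\sigma=\omega^{c}a\,x^{N}$ recovers $g(x^{K},y^{K})=0$ via $3N=K$, giving the $\fq$-Frobenius nonclassicality.

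The main obstacle is precisely the linearisation of the second paragraph: converting the cubic identity $g(x^{K},y^{K})=0$ into the linear relation $\omega^{c}a\,x^{N}+\omega^{c'}b\,y^{N}=1$. This rests on the cuspidal cubic $g=0$ being rational with an inverse parametrisation defined over the base field, which is what allows the cube root $\sigma$ to be realised as a genuine element of $\fqc(x,y)$ and what forces $3\mid K$. Once this is in place, Lemma~\ref{pol} disposes of the remainder exactly as in the case $p\mid n-3$.
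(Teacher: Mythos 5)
Your proposal is correct, and its forward implication takes a genuinely different route from the paper's. After the common reduction (Lemma \ref{ospart} plus Proposition \ref{p div (n-3)(3n-1)}) to the vanishing of $(a^3x^{K}+b^3y^{K}-1)^3+27a^3b^3x^{K}y^{K}$ with $K=3n-1+q$, the paper stays inside polynomial algebra: it evaluates the resulting divisibility at $y=0$ to get $n\mid q-1$, substitutes $y^n=(1-ax^n)/b$, compares multiplicities of the separable factor $1-ax^n$ on the two sides (a cube versus an exact power) to get $3n\mid q-1$, extracts a $p^r$-th root, uses separability again to force the cofactor $t$ to equal $1$, and finally evaluates at $x^n=0$ and $x^n=1/a$ to obtain $b^3,a^3\in\F_{p^r}$. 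You instead \emph{linearise} the cubic condition through the rational parametrisation $\sigma\mapsto(\sigma^3/a^3,(1-\sigma)^3/b^3)$ of the cuspidal cubic and its inverse, producing $\sigma\in\fqc(x,y)$ with $\sigma^3=a^3x^{K}$ and $(1-\sigma)^3=b^3y^{K}$, deducing $3\mid K$ from a valuation count at a simple zero of $x$, and arriving at $\omega^{c}ax^{N}+\omega^{c'}by^{N}=1$, after which Lemma \ref{pol} finishes exactly as in Proposition \ref{frob n-3}. This buys genuine conceptual clarity: it runs both cases $p\mid n-3$ and $p\mid 3n-1$ through one mechanism (Fried--MacRae), and the cube roots of unity $a^{p^r-1}=\omega^{c}$, $b^{p^r-1}=\omega^{c'}$ explain exactly why the conclusion is $a^3,b^3\in\F_{p^r}$ rather than $a,b\in\F_{p^r}$. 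The price is the pullback step, which you rightly flag as the crux but justify a bit too briskly: the identities $a^3X=\phi^3$ and $b^3Y=(1-\phi)^3$, with $\phi=(2a^3X-b^3Y+1)/(a^3X+b^3Y+2)$, hold a priori only on the dense image of the parametrisation, so to transport them to the generic point $(x^{K},y^{K})$ you need the cubic $g=0$ to be irreducible (so its function field is $\fqc(\sigma)$ and the map from $\ff$ is dominant); this is precisely what Lemma \ref{polirr} and Proposition \ref{p div (n-3)(3n-1)} supply and should be cited at that point. Also, your parenthetical about the denominator is misstated (if $a^3x^{K}+b^3y^{K}+2$ vanished identically, $\sigma$ would be undefined rather than constant); the quick correct argument is that the denominator vanishing identically, combined with $g(x^{K},y^{K})=0$, forces $a^3x^{K}$ and $b^3y^{K}$ to be the two roots of $(T+1)^2$, so $x^{K}$ would be constant, a contradiction --- or simply that a dominant map to the irreducible cubic pulls back the nonzero function $a^3X+b^3Y+2$ to a nonzero function. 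With these two small repairs the argument is complete.
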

\begin{proof}
As in the previous proof, by Lemma \ref{ospart} and Proposition \ref{p div (n-3)(3n-1)}, the curve $\ff$ is $\fq$-Frobenius nonclassical w.r.t. $\Sigma_3$ if and only if  the function

\begin{equation}
V:=(a^3x^{3n-1+q}+b^3y^{3n-1+q}-1)^3 +27a^3b^3x^{3n-1+q}y^{3n-1+q}
\end{equation} 
is vanishing. Therefore, the condition  $n=\frac{p^h-1}{3(p^r-1)}$, for some $r<h$ such that  $r|h$, and $a^3,b^3 \in \F_{p^r}$  implies
\begin{equation}\label{eqvi}
V=\Big((a^3x^{3n}+b^3y^{3n}-1)^3 +(3abx^{n}y^{n})^3\Big)^{p^r}.
\end{equation}
Using \eqref{eqvi} to replace $by^n$ by $1-ax^n$ yields  $V=0$, which gives the result.
Conversely, suppose $V=0$. That is, 
\begin{equation}\label{eq1}
(a^3x^{3n+q-1}+b^3y^{3n+q-1}-1)^3+27a^3b^3x^{3n+q-1}y^{3n+q-1}=(ax^n+by^n-1)h(x,y)
\end{equation}
for some $h(x,y)\in \mathbb{F}_q[x,y]\backslash \{0\}.$ Evaluating both sides of (\ref{eq1}) at $y=0$ yields $$(a^3x^{3n+q-1}-1)^3=(ax^n-1)h(x,0).$$ This implies that $ax^n-1$ divides $a^3x^{3n+q-1}-1$, and then $n\mid q-1$. Therefore, we may use (\ref{eq1}) to replace $y^n$ by $(1-ax^n)/b$, and then write
\begin{equation}\label{eq2}
\Big(a^3x^{3n+q-1}+b^3(\frac{1-ax^n}{b})^{3+\frac{q-1}{n}}-1\Big)^3=-27a^3b^3x^{3n+q-1}(\frac{1-ax^n}{b})^{3+\frac{q-1}{n}}.
\end{equation}
Since $(\frac{1-ax^n}{b})^{3+\frac{q-1}{n}}$ is a factor of both sides of (\ref{eq2}), we conclude that $3n\mid q-1$. 

Let $r$ and $t$ be integers such that $1+\frac{q-1}{3n}=p^rt$ and $p\nmid t$. From equation (\ref{eq2}), we have
\begin{equation}\label{eq3}
\Big(a^\frac{3}{p^r}x^{3nt}+b^\frac{3}{p^r}(\frac{1-ax^n}{b})^{3t}-1\Big)^3=-27(ab)^\frac{3}{p^r}x^{3nt}(\frac{1-ax^n}{b})^{3t}.
\end{equation}

Now equation (\ref{eq3}) implies that $(1-ax^n)^t$ is a factor of $a^\frac{3}{p^r}x^{3nt}-1$. Since the latter polynomial is separable, it follows that $t=1$. Hence $1+\frac{q-1}{3n}=p^r$, that is, $n=\frac{q-1}{3(p^r-1)}$. Moreover, using equation (\ref{eq3}) to replace $x^n$ by $0$ and $1/a$, we obtain $b^3\in \F_{p^r}$ and $a^3\in \F_{p^r}$, respectively. This finishes the proof.
\end{proof}

\noindent
{\bf Proof of Theorem \ref{frobenius}}: It follows directly from Theorems \ref{gv nc} and \ref{classicalidade}, and  Propositions \ref{frob n-3} and \ref{frob 3n-1}. \qed


\section{The number of rational points}

Recall that $q=p^h$, $p>11$ and $\ff:aX^n+bY^n=Z^n$ with $ab \neq 0$. As mentioned in Section \ref{intro}, if $\ff$ is $\fq$-Frobenius classical w.r.t. $\Sigma_3$, then bound (\ref{SV-sc}) for $s=3$ holds. In other words, in addition to the curves characterized in Theorem \ref{gv nc} and Theorem \ref{frobenius}, we have
\begin{equation}\label{SV-3}
N_q(\ff) \leq \frac{n}{3}(5n+q-1)-\frac{d}{3}(n-3),
\end{equation}
where $d$ is the number of $\fq$-rational points $P=(u:v:w) \in \ff$ for which $uvw=0$. Denote $m:=\gcd(n,q-1)$ and let $\ff^{'}:aX^m+bY^m=Z^m$. It is well known that $N_q(\ff^{'})=N_q(\ff)$. Therefore, we may assume that $n | q-1$. In such case, we have $N_q(\ff) \equiv d \mod n^2$, and using (\ref{SV-3}) we obtain
\begin{equation}\label{sv3}
N_q(\ff)\leq n^2 \left\lfloor\frac{5n+q-d-1}{3n}\right\rfloor+d,
\end{equation}
where $\lfloor e \rfloor$ denotes the integer part of $e$.

Note that, in such case, bound (\ref{SV-sc}) for $s=1$ and $s=2$ yields

\begin{equation}\label{sv1}
N_q(\ff) \leq n^2 \left\lfloor\frac{n+q-d-1}{2n}\right\rfloor+d
\end{equation}
and 

\begin{equation}\label{sv2}
N_q(\ff) \leq n^2 \left\lfloor\frac{2(2n+q-d-1)}{5n}\right\rfloor+d
\end{equation}
respectively. Hence bound (\ref{sv3}) is better then bounds (\ref{sv1}) and (\ref{sv2}) when, roughly, $n<\frac{q-d-1}{13}$.

\begin{ex}
Consider  the curve $\ff:X^8+Y^8+Z^8=0$ over $\F_{13^2}$. It can be checked that $N_q(\ff)=512$ and  $d=0$. Hence $\ff$ attains bound (\ref{sv3}).
\end{ex}

The possible values of $N_q(\ff)$ for the curves characterized in Theorem \ref{gv nc} are discussed in \cite{GV2}. In this section, we determine $N_q(\ff)$ for the curve $\ff$   given in Theorem \ref{frobenius}. 

\begin{thm}\label{npon1}
Suppose that $n=\frac{3(p^h-1)}{p^r-1}$ and $a,b \in \F_{p^r}$. 
\begin{itemize}

\item[(1)] If $p^r \equiv 1 \mod 3$, then
$$
N_q(\ff)=\frac{n^2}{9}\Big(N_{p^r}(\cc)-k\Big)+\frac{nk}{3},
$$
where $\cc$ is the curve $aX^3+bY^3=Z^3$ defined over  $\F_{p^r}$, and $k:=\#\{Q=(x_0:x_1:x_2) \in \cc(\F_{p^r}) \ | \ x_0x_1x_2=0\}$.

\item[(2)] If $p^r \not\equiv 1 \mod 3$, then $N_q(\ff)=\frac{n^2}{9}(p^r-2)+n.$ 
\end{itemize}
\end{thm}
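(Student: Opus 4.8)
The plan is to build an explicit fibration of $\ff(\fq)$ over $\cc(\F_{p^r})$ through the norm map and to count its fibers. Write $q_0=p^r$ and $N=\frac{q-1}{q_0-1}$; since $n=\frac{3(q-1)}{p^r-1}$ we have $N=n/3$, and because $r\mid h$ the exponent $N=1+q_0+\cdots+q_0^{h/r-1}$ is precisely the one realizing the norm $\mathrm{N}_{\fq/\F_{p^r}}(x)=x^N$. The two facts I would isolate first are that $x\mapsto x^N$ maps $\fq^{*}$ onto $\F_{p^r}^{*}$ with every fiber of size $N$ (surjectivity of the norm together with $q-1=N(q_0-1)$), and that $x^n=(x^N)^3$ for all $x\in\fq$. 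Since $a,b\in\F_{p^r}$, these give a well-defined map $\psi:\ff(\fq)\to\cc(\F_{p^r})$, $(x:y:z)\mapsto(x^N:y^N:z^N)$: the image coordinates lie in $\F_{p^r}$ and satisfy $a(x^N)^3+b(y^N)^3=ax^n+by^n=z^n=(z^N)^3$.

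Next I would count each fiber of $\psi$. Working in affine charts, a point $Q\in\cc(\F_{p^r})$ with all coordinates nonzero can be normalized to $(\xi:\eta:1)$ with $a\xi^3+b\eta^3=1$; its preimages are the $(x:y:1)\in\ff$ with $x^N=\xi$ and $y^N=\eta$, so by the fiber-size statement there are exactly $N^2=n^2/9$ of them. A point $Q$ with a vanishing coordinate necessarily has exactly one (two forced zeros collapse the point since $ab\neq 0$), say $Q=(1:\eta:0)$ with $a+b\eta^3=0$; its preimages are the $(1:y:0)\in\ff$ with $y^N=\eta$, of which there are exactly $N=n/3$, and the two remaining zero-coordinate cases are identical after permuting coordinates. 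Summing fiber sizes over $\cc(\F_{p^r})$ and letting $k$ denote the number of points of $\cc$ with a vanishing coordinate yields
\[
N_q(\ff)=\frac{n^2}{9}\bigl(N_{p^r}(\cc)-k\bigr)+\frac{n}{3}\,k,
\]
which is assertion (1).

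For assertion (2) I would specialize this identity. When $p^r\not\equiv1\bmod 3$ the cubing map is a bijection of $\F_{p^r}$, so over the chart $Z=1$ the substitution $(X,Y)\mapsto(X^3,Y^3)$ turns $aX^3+bY^3=1$ into a line with $p^r$ affine solutions, while $aX^3+bY^3=0$ contributes a single point at infinity; hence $N_{p^r}(\cc)=p^r+1$. The same bijectivity shows each of the three coordinate lines meets $\cc$ in exactly one $\F_{p^r}$-point, so $k=3$. Substituting $N_{p^r}(\cc)=p^r+1$ and $k=3$ into the displayed formula gives $N_q(\ff)=\frac{n^2}{9}(p^r-2)+n$.

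I expect the only genuine care to lie in the fiber count: one must verify that the norm fibers all have the uniform size $N$ and that the projective normalizations introduce no overcounting, so that each $\F_{p^r}$-point of $\cc$ contributes the clean factor $n^2/9$ or $n/3$ with no exceptions. The underlying geometric reason that such a rigid fibration exists is the $\fq$-Frobenius nonclassicality, which forces $\Phi_q(P)$ onto the osculating cubic $H_P$ of Proposition \ref{p div (n-3)(3n-1)}; but the count itself follows directly from the arithmetic relation $n=3N$ together with $a,b\in\F_{p^r}$.
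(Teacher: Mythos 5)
Your proposal is correct and follows essentially the same route as the paper: both construct the norm-map fibration $(x_0:x_1:x_2)\mapsto(x_0^{n/3}:x_1^{n/3}:x_2^{n/3})$ from $\ff(\fq)$ onto $\cc(\F_{p^r})$, count fibers of size $(n/3)^2$ over points with all coordinates nonzero and of size $n/3$ over the $k$ points with a vanishing coordinate, and for part (2) use that cubing permutes $\F_{p^r}$ to conclude $k=3$ and $N_{p^r}(\cc)=p^r+1$. The only difference is that you spell out the fiber-size and normalization checks that the paper leaves implicit.
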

\begin{proof}
The map $\rho:\ff(\fq) \lra \cc(\F_{p^r})$ given by $(x_0:x_1:x_2) \mapsto (x_0^{\frac{n}{3}}:x_1^{\frac{n}{3}}:x_2^{\frac{n}{3}})$ is clearly  well defined. Since $x \mapsto x^{\frac{n}{3}}$ is  the norm function  of $\fq$ onto $\F_{p^r}$, we have  
$\ff(\fq)= \bigcup_{Q \in \cc(\F_{p^r})} \rho^{-1}(Q).$
Thus, setting $k:=\#\{Q=(x_0:x_1:x_2) \in \cc(\F_{p^r}) \ | \ x_0x_1x_2=0\}$, we arrive at
$$
N_q(\ff)=\left(\frac{n}{3}\right)^2\big(N_{p^r}(\cc)-k\big)+\frac{n}{3}k,
$$
which proves the first assertion.

Now note that in the case $p^r \not\equiv 1 \mod 3$,  the map $\alpha \mapsto \alpha^3$ permutes $\F_{p^r}$, and then $k=3$.  Moreover, in this case, $N_{p^r}(\cc)=p^r+1$, which finishes the proof. 
\end{proof}

\begin{thm}\label{npon2}
If $n=\frac{p^h-1}{3(p^r-1)}$ and $a^3,b^3 \in \F_{p^r}$, then

\begin{equation*}
N_q(\ff)= \begin{cases} 
3n+n^2\big(p^r-2\big), & \text{ if }\quad p^r \equiv 1 \mod 3\\
3n+n^2p^r, & \text{  otherwise.}
\end{cases} 
\end{equation*}
\end{thm}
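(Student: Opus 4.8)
The plan is to reduce $\ff$ to the standard Fermat curve $X^n+Y^n+Z^n=0$ and then invoke the Korchm\'aros--Sz\"onyi Theorem (Theorem \ref{ferko}), whose output matches the two displayed cases once its parameters are computed.

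First I would verify that $a$, $b$ and $-1$ are all $n$-th powers in $\fq$. Since $n=\frac{p^h-1}{3(p^r-1)}$ we have $n\mid q-1$ with $(q-1)/n=3(p^r-1)$, and an element $c\in\fq^*$ is an $n$-th power exactly when $c^{(q-1)/n}=1$. For $c=a$ this reads $(a^3)^{p^r-1}=1$, which holds because $a^3\in\F_{p^r}^*$; the same applies to $b$. For $c=-1$ it reads $(-1)^{3(p^r-1)}=1$, which holds since $p$ is odd, so $p^r-1$ is even. Choosing $\alpha,\beta,\gamma\in\fq^*$ with $\alpha^n=a$, $\beta^n=b$, $\gamma^n=-1$, the diagonal map $(X:Y:Z)\mapsto(\alpha X:\beta Y:\gamma Z)$ is an $\fq$-linear automorphism of $\p^2$ carrying $\ff$ onto $\cc_0:X^n+Y^n+Z^n=0$; hence $N_q(\ff)=N_q(\cc_0)$.

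Next I would apply Theorem \ref{ferko} to $\cc_0$ over the base field $\F_{p^r}$, that is, with its ``$q$'' taken to be $p^r$ and its ``$m$'' taken to be $h/r$; this exponent is $>1$ because $r<h$ and $r\mid h$, and $\cc_0$ is defined over the prime field. The defining relation for $n$ gives $\frac{(p^r)^{h/r}-1}{n(p^r-1)}=\frac{q-1}{n(p^r-1)}=3$, so the auxiliary integer $t$ satisfies $t\equiv p^r\pmod 3$ with $0<t<3$, and $l=\gcd(3,t+1)$. The only delicate point is the technical lower bound on $p$ in Theorem \ref{ferko}; here it is vacuous, since its exponent $(t-1)\bigl(\tfrac{q-1}{n(p^r-1)}-l\bigr)=(t-1)(3-l)$ vanishes in both cases. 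Indeed $p>3$ forces $p^r\not\equiv 0\pmod 3$, so either $p^r\equiv 1$ (whence $t=1$ and the factor $t-1=0$) or $p^r\equiv 2$ (whence $t=2$, $l=\gcd(3,3)=3$ and the factor $3-l=0$); in either case the hypothesis collapses to $p>1$.

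Finally I would substitute into $N_q(\cc_0)=3n+n^2(p^r-2)+n^2(l-1)(l-2)$. For $p^r\equiv 1\pmod 3$ we get $l=1$, the last term drops, and $N_q(\ff)=3n+n^2(p^r-2)$. For $p^r\not\equiv 1\pmod 3$, i.e.\ $p^r\equiv 2\pmod 3$, we get $l=3$, so $(l-1)(l-2)=2$ and $N_q(\ff)=3n+n^2(p^r-2)+2n^2=3n+n^2p^r$, as asserted. The main (indeed essentially the only) obstacle is the correct bookkeeping in Theorem \ref{ferko}: identifying the right base field, computing $\frac{q-1}{n(p^r-1)}=3$, and checking that the $p$-bound degenerates; the reduction step and the closing arithmetic are routine.
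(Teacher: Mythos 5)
Your proof is correct and takes essentially the same approach as the paper: reduce to the standard Fermat curve $X^n+Y^n+Z^n=0$ and apply Theorem \ref{ferko} over the base field $\F_{p^r}$ with $m=h/r$, obtaining $t=l=1$ when $p^r\equiv 1\pmod 3$ and $t=2$, $l=3$ otherwise. You merely make explicit two points the paper leaves implicit --- the $n$-th power argument justifying the diagonal change of coordinates, and the observation that the exponent $(t-1)(3-l)$ vanishes so the lower bound on $p$ in Theorem \ref{ferko} is vacuous --- both of which are correct.
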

\begin{proof}
Since $a^3, b^3 \in \F_{p^r}$, we may assume that $\ff$ is defined by $X^n+Y^n+Z^n=0$. Setting $m=h/r$, we obtain $\frac{(p^r)^m-1}{n(p^r-1)}=3$. Thus the result follows from a direct application of Theorem \ref{ferko}, observing that:
\begin{itemize}
\item if $p^r \not\equiv 1 \mod 3$, then $t=2$ and $l=3$.
\item if $p^r \equiv 1 \mod 3$, then $t=l=1$.
\end{itemize}
\end{proof}

Nor surprisingly, using the two previous theorems, one can find examples of curves for which the upper bound  (\ref{sv3}) fails.

\begin{ex}
Consider the curve $\ff:X^{294}+Y^{294}=Z^{294}$ over $\F_{97^2}$. Here $\ff$ has degree $n=3\frac{97^2-1}{97-1}$. The number of $\F_{97}$-rational points of the cubic $\cc:=X^3+Y^3=Z^3$ is $N_{97}(\cc)=117$. Thus Theorem \ref{npon1} yields $N_{97^2}(\ff)=1038114$. Since, in this case, $d=3n=882$, it follows that $N_{97^2}(\ff)$ exceeds the upper bound in (\ref{sv3}).
\end{ex}

\begin{ex}
Let $\ff$ be the curve $X^8+Y^8+Z^8=0$ over $\F_{23^2}$. Since $\ff$  has degree $n=\frac{23^2-1}{3(23-1)}$, it follows from Theorem \ref{npon2} that $N_{23^2}(\ff)=1496$. One  can be check  that  $d=24$, and then $N_{23^2}(\ff)$ exceeds the upper bound in (\ref{sv3}).
\end{ex}

\appendix

\section{Some irreducible curves}
\begin{lem}\label{polirr}
If $p > 3$, then the following curves are irreducible over $\fqc$:
\begin{itemize}
\item  $\tilde{\G_1}:X^2Z^2+ Y^2Z^2 -X^2Y^2=0$ 
\item  $\tilde{\G_2}:Y^2Z^2+X^2Z^2+X^2Y^2-2XYZ(X+ Y+Z)=0$
\item  $\tilde{\G_3}:X^6+Y^6+Z^6-2(X^3Y^3+X^3Z^3+Y^3Z^3)=0$
\item  $\tilde{\G_4}:(X^2+Y^2-Z^2)^3+27X^2Y^2Z^2=0$.
\end{itemize}
\end{lem}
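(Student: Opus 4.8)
The plan is to prove each statement by showing the defining polynomial is irreducible in $\fqc[X,Y,Z]$, and to reduce this to irreducibility of a one-variable polynomial over the rational function field of the two remaining variables via Gauss's lemma. For $\tilde{\G_3}$ this reduction is free, since the polynomial is monic in $X$; for $\tilde{\G_1}$ and $\tilde{\G_2}$ one checks directly that the coefficients (viewed in the appropriate polynomial ring) have trivial gcd, so the polynomials are primitive. The hypotheses $p\neq 2$ and $p\neq 3$ should enter exactly to keep the linear and quadratic factors that appear below pairwise distinct (equivalently, squarefree/cubefree), so I expect to invoke them repeatedly.

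For the two quartics I would view each as a quadratic in a well-chosen variable and show its discriminant is a non-square. Writing $\tilde{\G_1}=(Z^2-Y^2)X^2+Y^2Z^2$ as a quadratic in $X$, a splitting would force $Y^2-Z^2=(Y-Z)(Y+Z)$ to be a square in $\fqc(Y,Z)$, which it is not since it is squarefree (here $p\neq 2$). Grouping $\tilde{\G_2}$ as a quadratic $(X-Y)^2Z^2-2XY(X+Y)Z+X^2Y^2$ in $Z$, the discriminant computes to $16(XY)^3$, a non-square because $XY$ is squarefree. In both cases Gauss's lemma then upgrades irreducibility over $\fqc(Y,Z)$ (resp.\ $\fqc(X,Y)$) to irreducibility in $\fqc[X,Y,Z]$.

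The sextics are the heart of the matter and I would handle them through iterated radical extensions. For $\tilde{\G_3}$, rewrite it as $X^6-2(Y^3+Z^3)X^3+(Y^3-Z^3)^2=g(X^3)$, where the quadratic $g$ has discriminant $16Y^3Z^3$, a non-square; hence $g$ is irreducible and a root $\theta=X^3$ generates a degree-$2$ extension. The key trick is that the substitution $\eta=\sqrt{YZ}$ makes this field rational, namely $\fqc(Y,\eta)$, and a short computation gives $\theta=(Y^3+\eta^3)^2/Y^3$. Since $Y^3+\eta^3=(Y+\eta)(Y^2-Y\eta+\eta^2)$ is a product of three distinct linear forms (using $p\neq 3$), it is not a cube, so neither is $\theta$; thus $X^3-\theta$ stays irreducible and $[\fqc(Y,Z)(X):\fqc(Y,Z)]=6$, forcing irreducibility of $\tilde{\G_3}$. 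For $\tilde{\G_4}$ I would instead exploit that it is the pullback of a plane cubic under the squaring map $\psi:(X:Y:Z)\mapsto(X^2:Y^2:Z^2)$: in coordinates $(p,q,r)=(X^2,Y^2,Z^2)$ it becomes $D:(p+q-r)^3+27pqr=0$. First I would show $D$ is an irreducible cubic (no linear factor: such a factor would be a root $L(p,q)$ of the cubic in $r$, but $(p+q-L)^3=-27pqL$ cannot hold, as a cube of one linear form cannot match a product of three distinct linear forms). Then $\tilde{\G_4}=\psi^{*}D$ is a $(\zz/2)^2$-Kummer cover of $D$, irreducible precisely when the classes of $p/r$ and $q/r$ are independent in $\fqc(D)^{\times}/(\fqc(D)^{\times})^2$; this I would verify by a valuation count, using that $D$ meets $\{p=0\}$ only at the smooth point $(0:1:1)$ with multiplicity $3$ (so $p/r$ has odd valuation there), symmetrically for $q/r$ at $(1:0:1)$, and likewise for $pq/r^2$ at $(0:1:1)$.

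I expect the main obstacle to be precisely the two sextic cases, and specifically making the radical bookkeeping rigorous: for $\tilde{\G_3}$, justifying that $\fqc(Y,Z)(\theta)=\fqc(Y,\eta)$ is rational and that the resulting element is not a cube; and for $\tilde{\G_4}$, pinning down the Kummer dictionary between connectedness of the cover and $\mathbb F_2$-independence of square classes, together with verifying that the relevant points of $D$ are nonsingular so that the Bézout intersection numbers genuinely equal local valuations. The characteristic restriction $p>3$ is what keeps $Y^2-Z^2$, $XY$, and $Y^3+\eta^3$ squarefree or cubefree and keeps those points of $D$ smooth, so I would flag each place it is used.
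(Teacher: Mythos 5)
Your proposal is correct, but it follows a genuinely different route from the paper's. The paper imports the irreducibility of $\tilde{\G_1}$ and $\tilde{\G_2}$ from \cite[Lemma A.1]{AB}, and for the sextics it argues geometrically: it computes the entire singular locus of $\tilde{\G_4}$ (six cusps, four nodes), notes that the coordinate lines cut $\tilde{\G_4}$ exactly in the cusps, and then eliminates every possible splitting type (a linear component, conic plus quartic, three conics, two cubics) by B\'ezout counts of singular points, with $\tilde{\G_3}$ declared ``similar''. You instead argue algebraically throughout: discriminant-plus-Gauss for both quartics; a degree-$6$ radical tower $\fqc(Y,Z)\subset\fqc(Y,\eta)\subset\fqc(Y,\eta)(\sqrt[3]{\theta})$ for $\tilde{\G_3}$; and, for $\tilde{\G_4}$, the identification of the curve as the pullback of the irreducible cubic $D:(p+q-r)^3+27pqr=0$ under the squaring map, reduced via Kummer theory to $\F_2$-independence of the square classes of $p/r$ and $q/r$ in $\fqc(D)^{\times}/(\fqc(D)^{\times})^{2}$, which you verify through odd valuations at the smooth points $(0:1:1)$ and $(1:0:1)$. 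Your route is self-contained (no appeal to \cite{AB}), avoids computing singular loci and the case analysis over decomposition types, and makes every use of $p>3$ explicit; the paper's route is more elementary, using nothing beyond B\'ezout and singularity counting, at the cost of a longer combinatorial case analysis.

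Three spots in your argument need tightening, none fatal. First, the Kummer dictionary is the one step that must be stated and proved rather than waved at: one clean way is to note that $B:=\fqc[x,y]/(f(x^2,y^2))$ is free of rank $4$ (basis $1,x,y,xy$) over the domain $A:=\fqc[p,q]/(f)$, hence torsion-free, hence embeds into $B\otimes_A K=K[x,y]/(x^2-\tilde{p},\,y^2-\tilde{q})$ where $K=\fqc(D)$, $\tilde{p}=p/r$, $\tilde{q}=q/r$; this algebra is a field exactly under the stated independence, and a subring of a field is a domain, while failure of independence visibly splits the generic fiber. Second, the inference ``$Y^3+\eta^3$ is not a cube, so neither is $\theta$'' needs the remark that $\theta=(Y^3+\eta^3)^2/Y^3$ is a cube if and only if $(Y^3+\eta^3)^2$ is, if and only if $Y^3+\eta^3$ is, since squaring is invertible modulo cubes. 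Third, when excluding a linear factor $r-L(p,q)$ of $D$, the forms $p$, $q$, $L$ need not be distinct ($L$ may be proportional to $p$ or $q$), but comparing exponents modulo $3$ in the UFD $\fqc[p,q]$ on both sides of $(p+q-L)^3=-27pqL$ still gives the contradiction in every case.
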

\begin{proof}
The irreducibility of $\tilde{\G_1}$ and $\tilde{\G_2}$ follows from \cite[Lemma A.1]{AB}. 

The proofs for the irreducibility of $\tilde{\G_3}$ and $\tilde{\G_4}$ are similar. Thus we will prove the latter case only. 

  For the curve $\tilde{\G_4}$, one can readily check that its set of singular points is given by $\mathscr{C} \cup \mathscr{N}$ where

$$
\mathscr{C}=\{(0:1:1),(0:-1:1),(1:0:1),(-1:0:1),(i:1:0),(-i:1:0) \ \ | \ \ i^2=-1\},
$$
 and
$$
\mathscr{N}=\{(i:i:1),(i:-i:1),(-i:-i:1),(-i:i:1) \ \ | \ \ i^2=-1\}
$$
are  the sets of cusps and nodes of $\tilde{\G_4}$, respectively. We proceed to show that $\tilde{\G_3}$ has no component of degree $\leq 3$.
 Note that the lines $x=0,y=0$ and $z=0$ intersect  $\tilde{\G_4}$ in  pairs of cusps $\{P_1,P_2\}$, $\{Q_1,Q_2\}$ and $\{R_1,R_2\}$, whose union is $\mathscr{C}$. Therefore, any component of $\tilde{\G_4}$  must contain at least $3$ points $P_i$, $Q_j$, $R_k$ for some $(i,j,k)\in \{1,2\}^3$. Since no choice of $3$ such points will be collinear, the curve has no linear components. 
 
 Now assume $\tilde{\G_4}=\cc \cup \qq$, where $\cc$ is a smooth conic and $\qq$ is an irreducible quartic. Since the quartic $\qq$ has at most $3$ singularities,  $\cc$ and  $\qq$ must intersect in at least $3$ distinct cusps in $\mathscr{C}$. Thus, by B\'ezout's theorem, $\cc$ and  $\qq$ intersect in at most $5$ distinct points, and then  $\tilde{\G_4}$ has at most $8$ singular points. This contradicts $\#(\mathscr{C} \cup \mathscr{N})=10$.
 
Suppose $\tilde{\G_4}$ is the union of $3$ distinct smooth conics. By  B\'ezout's theorem, the intersection of these conics yields $12$ (counted with multiplicities) singular points of  $\tilde{\G_4}$. Since all points of 
$\mathscr{C}$ are cusps, its $6$ points must be counted at least twice. Thus all singular points of  $\tilde{\G_4}$  lie in $\mathscr{C}$, a contradiction.

 Finally, suppose that $\tilde{\G_4}$ is the union of $2$ irreducible cubics. In the worst case  scenario, each cubic has one cusp. Thus, similarly to the previous cases, the remaining  $4$ cusps will give rise to a counting contradiction.  Therefore $\tilde{\G_4}$ is irreducible.

\end{proof}

\begin{lem}\label{poliosc}
Suppose $p\in \{13,17\}$, and let $\tilde{\G_i}$ be the curves given by Lemma \ref{polirr}.   For   each $i\in \{1,\dots, 4\}$, there exist a nonsigular point  $\tilde{P_i}=(s_i:s_i:1) \in \tilde{\G_i}$ and a nonsingular cubic $\tilde{\mathcal{C}_i}$ such that 
$$I(\tilde{P_i},\tilde{\G_i} \cap \tilde{\mathcal{C}_i})\in \{10,12\}$$

\end{lem}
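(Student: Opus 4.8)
The plan is to handle the four curves uniformly, exploiting the involution $\iota:(X:Y:Z)\mapsto(Y:X:Z)$, which fixes each $\tilde{\G_i}$. First I would locate the point $\tilde{P_i}$: substituting $X=Y$ and $Z=1$ into the equation of $\tilde{\G_i}$ produces a one-variable polynomial (for instance $s^4-4s^3$ for $\tilde{\G_2}$ and $1-4s^3$ for $\tilde{\G_3}$), and I would take $s_i$ to be a nonzero root for which $(s_i:s_i:1)$ avoids the finitely many diagonal singular points of $\tilde{\G_i}$ (which are determined in Lemma \ref{polirr}; note that $\tilde{\G_4}$ does have the nodes $s^2=-1$ on the diagonal, so one must choose the other root). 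Nonsingularity of $\tilde{P_i}$ is then a single scalar check, since by symmetry the gradient of $\tilde{\G_i}$ at a diagonal point is proportional to $(1:1:\ast)$.

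The conceptual core is to explain why the contact order is even, which is what forces the value into $\{10,12\}$. At a nonsingular diagonal point the affine tangent line is the antidiagonal $x+y=2s_i$, so $\iota$ reverses the tangent direction and hence acts by $-1$ on the cotangent space; as $p\neq 2$ one may choose a local uniformizer $\tau$ at $\tilde{P_i}$ with $\iota^\ast\tau=-\tau$, and then the branch satisfies $y(\tau)=x(-\tau)$. It follows that the restriction to this branch of any $\iota$-symmetric cubic is an \emph{even} power series in $\tau$. I would then construct $\tilde{\mathcal{C}_i}$ inside the $6$-dimensional space of $\iota$-symmetric cubics by requiring its restriction to vanish to order at least $10$: by evenness this amounts to only five linear conditions (the coefficients of $\tau^0,\tau^2,\tau^4,\tau^6,\tau^8$), so a nonzero solution exists, and any such $\tilde{\mathcal{C}_i}$ automatically has even contact, i.e. $I(\tilde{P_i},\tilde{\G_i}\cap\tilde{\mathcal{C}_i})\in\{10,12,14,\dots\}$.

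It remains to pin the value down to $\{10,12\}$ and to check that $\tilde{\mathcal{C}_i}$ can be taken nonsingular. For the quartics $\tilde{\G_1},\tilde{\G_2}$ the upper bound is free: since $\tilde{\G_i}$ is irreducible and a cubic cannot share a component with it, B\'ezout caps the contact at $\deg\tilde{\G_i}\cdot 3=12$, so evenness alone yields a value in $\{10,12\}$, and it remains only to exhibit a smooth member of the solution space. For the sextics $\tilde{\G_3},\tilde{\G_4}$, however, B\'ezout permits contact as large as $18$, so one must compute the first nonvanishing even coefficient of the restriction explicitly and check that it occurs in degree $10$ or $12$, as well as verify directly that the constructed cubic is smooth (equivalently, that its singular locus is empty).

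The main obstacle is exactly this last, characteristic-dependent verification for the sextic cases, which are the curves singled out in Lemma \ref{classic} because $3\deg\tilde{\G_i}\geq p$ when $p\in\{13,17\}$. The ruling-out of contact $\geq 14$ and the smoothness check do not follow from any clean general bound and must be carried through separately for $p=13$ and $p=17$ using the explicit local expansion of the branch; this is also where one determines, case by case, whether the contact equals $10$ or $12$. The symmetry reduction of the previous paragraphs is what makes these finitely many computations manageable and is the reason the contact can only be even.
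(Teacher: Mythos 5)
Your strategy is sound and genuinely different in character from the paper's proof, which is pure exhibition: the authors simply list the four points and four explicit affine cubics and declare the verification ``simple but computational.'' Your points coincide exactly with theirs ($s^2=2$ for $\tilde{\G_1}$, $s=4$ for $\tilde{\G_2}$, $s^3=1/4$ for $\tilde{\G_3}$, $s^2=1/8$ for $\tilde{\G_4}$, correctly discarding the diagonal nodes $s^2=-1$ of $\tilde{\G_4}$), and your symmetry framework explains where the paper's cubics come from: all four exhibited $\tilde{\mathcal{C}_i}$ are indeed invariant under $\iota:(X:Y:Z)\mapsto (Y:X:Z)$, i.e.\ they solve exactly the five-conditions-in-six-dimensions linear system you set up. The evenness argument is correct (at a smooth diagonal point the tangent is the antidiagonal, $\iota$ acts by $-1$ on the cotangent line, and $p\neq 2$ allows an anti-invariant uniformizer), and combined with B\'ezout it gives, for the quartics $\tilde{\G_1},\tilde{\G_2}$, a computation-free proof that some symmetric cubic has contact exactly $10$ or $12$ --- a conceptual gain over the paper.

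However, as written your argument does not yet establish the lemma, and the two items you defer are precisely where its content lies. First, the statement requires $\tilde{\mathcal{C}_i}$ to be nonsingular (Lemma \ref{classic} uses its irreducibility to run B\'ezout against a second cubic); your dimension count only yields a nonzero symmetric cubic, and nothing in the argument excludes a reducible or non-reduced solution, so smoothness must be checked on an explicit member of the solution space. Second, for the sextics $\tilde{\G_3},\tilde{\G_4}$ you only get ``even contact $\geq 10$,'' and ruling out $14$, $16$, $18$ is exactly the hard case ($3\deg\tilde{\G_i}\geq p$) for which the lemma is needed --- note that for the quartics the contact bound is automatic from B\'ezout anyway, since any cubic meets an irreducible quartic with multiplicity at most $12<p$ at a point. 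Both outstanding checks are finite and well-posed in your framework (solve the $5\times 6$ linear system, expand the restriction to order $12$ in $\tau$, verify the Jacobian criterion), but until they are carried out for $p=13$ and $p=17$ --- or the paper's explicit cubics are adopted and verified --- the cases $i=3,4$ remain open in your write-up.
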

\begin{proof}
Due to its simple but computational nature, our proof will be limited to presenting  each point $\tilde{P_i}=(s_i,s_i)$ and the corresponding cubic $\mathcal{C}_i$ in affine coordinates.
\begin{itemize}
\item  $\tilde{P_1}=(s,s)$ where $s^2=2$, and\\
$\mathcal{\tilde{C}}_1: x^3 + 1677x^2y -1194sx^2 + 1677xy^2 -1848sxy + 996x + y^3 -1194sy^2 + 996y -232s=0.$
\item   $\tilde{P_2}=(4,4)$,  and\\
$\mathcal{\tilde{C}}_2:  x^3 + 543x^2y-672x^2 + 543xy^2 + 2112xy-8448x + y^3 -672y^2 -8448y -14336=0$

\item   $\tilde{P_3}=(s,s)$ where $s^3=1/4$, and\\
$\mathcal{\tilde{C}}_3: 13x^3 + 27x^2y - 27sx^2 + 27xy^2 - 42sxy + 13y^3 - 27sy^2 + 4=0$

\item   $\tilde{P_4}=(s,s)$ where $s^2=1/8$, and\\
$\mathcal{\tilde{C}}_4:  532x^3 + 804x^2y - 6216sx^2 + 804xy^2 - 9120sxy + 2841x + 532y^3 -  6216sy^2 + 2841y - 3322s=0$
\end{itemize}

\end{proof}

\section{Frobenius nonclassicality of $aX^{q-1}+(1-a)Y^{q-1}=Z^{q-1}$ with respect to conics}

\begin{thm}
Suppose $p>5$  divides $n+1$. Then the Fermat curve $\ff:aX^{n}+bY^{n}=Z^{n}$  is $\fq$-Frobenius nonclassical with respect to $\Sigma_2$ if and only if $a+b=1$ and $n=q-1$.
\end{thm}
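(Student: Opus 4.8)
The plan is to reduce the statement to the vanishing of a single explicit function on $\ff$ via the osculating conic, dispatch the easy implication by a one-line substitution, and settle the converse with a short degree argument.

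First I would record the geometry forced by $p>5$ and $p\mid n+1$. Here $p\mid n+1$ gives $p\nmid (n-2)(n-1)(2n-1)$, so by Theorem \ref{gv nc} the curve $\ff$ is nonclassical with respect to $\Sigma_2$, while $p\nmid n-1$ shows it is classical with respect to $\Sigma_1$. Writing $n+1=mp^r$ with $p\nmid m$ and multiplying $ax^n+by^n=1$ by $xy$ yields $a(x^m)^{p^r}y+b(y^m)^{p^r}x-xy=0$, so at a generic $P=(u:v:1)$ the affine conic $\G_P:\ au^{n+1}Y+bv^{n+1}X-XY=0$ satisfies $I(P,\ff\cap \G_P)\ge p$. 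It is irreducible when $uv\neq 0$, and since neither $\G_P$ nor the osculating conic $\hh_P$ shares a component with $\ff$, Lemma \ref{trans} together with B\'ezout (using $4<p$) forces $\G_P=\hh_P$. Exhibiting the orders $0,1,2,3,4$ from degenerate conics (products of the tangent line with generic lines), and combining $I(P,\ff\cap\hh_P)\ge p$ with Proposition \ref{pwr}, I obtain the generic $\Sigma_2$-order sequence $(0,1,2,3,4,\varepsilon_5)$ with $\varepsilon_5=p^r\ge p$.

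With the order sequence in hand, Proposition \ref{prop1} applies: $\ff$ is $\fq$-Frobenius nonclassical with respect to $\Sigma_2$ precisely when $\Phi_q(P)\in\hh_P$ for infinitely many $P$, that is, when the function
\[
W:=ax^{n+1}y^{q}+bx^{q}y^{n+1}-x^{q}y^{q}
\]
vanishes identically on $\ff$. The implication $(\Leftarrow)$ is then immediate: if $n=q-1$ and $a+b=1$, then $q=n+1$ gives $x^{n+1}=x^{q}$ and $y^{n+1}=y^{q}$, whence $W=(a+b-1)x^{q}y^{q}=0$.

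For $(\Rightarrow)$ I would work in $\fq(\ff)=\fq(x)[y]$ with $y^{n}=(1-ax^{n})/b=:f$, and reduce the $y$-exponents in $W$: writing $q=sn+t$ with $0\le t<n$ gives $y^{q}=f^{s}y^{t}$ and $y^{n+1}=fy$, so $W=(ax^{n+1}-x^{q})f^{s}y^{t}+bx^{q}f\,y$. Since $\{1,y,\dots,y^{n-1}\}$ is an $\fq(x)$-basis and $bx^{q}f\neq 0$, vanishing of $W$ forces $t=1$, i.e.\ $n\mid q-1$; the surviving coefficient of $y$ must then vanish, and after writing $g:=1-ax^{n}=bf$, cancelling $g$ and $x^{n+1}$, and setting $w=x^{n}$, this collapses to the polynomial identity
\[
(a-w^{\,s-1})(1-aw)^{\,s-1}+b^{\,s}w^{\,s-1}=0 .
\]
If $s\ge 2$ the left-hand side has degree $2(s-1)\ge 2$ with leading coefficient $(-1)^{s}a^{\,s-1}\neq 0$, contradicting the identity; hence $s=1$, that is $n=q-1$, and the $s=1$ case reduces to $a+b-1=0$. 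I expect the main obstacle to be the first paragraph: correctly identifying the osculating conic and establishing that the generic order sequence is $(0,1,2,3,4,\varepsilon_5)$, which is exactly what makes Proposition \ref{prop1} available; once the condition is recast as the vanishing of $W$, the remaining algebra is routine.
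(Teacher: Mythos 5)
Your proof is correct, and it reaches the statement by a more self-contained route than the paper's. The paper splits the two implications asymmetrically: for ($\Rightarrow$) it simply cites the proof of \cite[Theorem 3]{GV2} to conclude that the function $x^qy^q-ax^{n+1}y^q-by^{n+1}x^q$ (the negative of your $W$) must vanish identically as a polynomial, whence $n+1=q$ and $a+b=1$; for ($\Leftarrow$) it computes the osculating conic only for the special curve $aX^{q-1}+(1-a)Y^{q-1}=Z^{q-1}$ via the valuation trick and invokes Proposition \ref{prop1}. You instead set up the criterion uniformly for every $n$ with $p\mid n+1$: you identify $\hh_P$ with the conic $\G_P$ obtained from the $p^r$-power rewriting of the curve equation, using Lemma \ref{trans} plus B\'ezout (the mechanism of Lemma \ref{simp} and Lemma \ref{ospart}, which the paper's appendix proof does not use), deduce the order sequence $(0,1,2,3,4,\varepsilon_5)$, and reduce both implications to the vanishing of $W$. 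Your ($\Rightarrow$) then replaces the citation to \cite{GV2} by explicit algebra: expanding $W$ in the basis $1,y,\ldots,y^{n-1}$ of $\fq(\ff)$ over $\fq(x)$ forces $n\mid q-1$, and the leading-coefficient count in $(a-w^{s-1})(1-aw)^{s-1}+b^sw^{s-1}=0$ forces $s=1$, hence $n=q-1$ and then $a+b=1$ --- the same style of argument the paper uses for cubics in Propositions \ref{frob n-3} and \ref{frob 3n-1}. What your route buys is independence from \cite{GV2} (apt, since this theorem repairs an oversight in that very paper) together with an osculating-conic description valid for all $p\mid n+1$; what the paper's buys is brevity. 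One harmless over-claim on your side: Proposition \ref{pwr} yields only that $\varepsilon_5$ is \emph{some} power of $p$, not necessarily the $p^r$ defined by $n+1=mp^r$; but your application of Proposition \ref{prop1} needs only $\varepsilon_i=i$ for $i\le 4$, so nothing is affected.
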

\begin{proof} Set  $x=X/Z$ and $y=Y/Z$, and  let  $\ff({\overline{\mathbb{F}}_q}):={\overline{\mathbb{F}}_q}(x,y)$ be the function field of 
$\ff$.  If $\ff$ is $\fq$-Frobenius nonclassical, then it follows from the proof  of (\cite[Theorem 3]{GV2}) that
the function $$G= x^qy^q-ax^{n+1}y^q-by^{n+1}x^q,$$ seen as a polynomial, must be identically zero. That
is, $n+1=q$ and $a+b=1$. 

Conversely,  suppose that $\ff$ is given by $aX^{q-1}+(1-a)Y^{q-1}=Z^{q-1}$.
Since $p \nmid n-1$, it follows that $\ff$ is classical w.r.t. $\Sigma_1$ \cite[Corollary 2.2]{Pa}. Thus the order sequence of $\ff$ w.r.t. $\Sigma_2$ is $(0,1,2,3,4,\varepsilon)$ with  $\varepsilon > 5$ (\cite[Theorem 3]{GV1}). 
Now, for $P=(u:v:1) \in \ff$, with $uv \neq 0$, we claim that the osculating conic to $\ff$ at $P$ has affine equation  given by
$$\cc_P: (au)^qY+\big((1-a)v\big)^qX-XY=0.$$
First note that   $h(x,y):=(ax^{q-1}+(1-a)y^{q-1}-1)xy=\big(ax\big)^qy+\big((1-a)y\big)^qx-xy=0.$
Setting $$g(x,y):= (au)^qy+\big((1-a)v\big)^qx-xy,$$ it follows that
 \begin{eqnarray}
g(x,y)&=&g(x,y)-h(x,y) \nonumber\\
      &=& (au-ax)^qy+\big((1-a)v-(1-a)y\big)^qx \nonumber,
\end{eqnarray}
  and then $v_P(g(x,y)) \geq q >5$. That is, $(au)^qY+\big((1-a)v\big)^qX-XY=0$ is the osculating conic to $\ff$ at $P=(u:v:1)$.
  
  Let $\Phi: \ff \rightarrow  \ff $ be the $\fq$-Frobenius map.  Since $\ff$ has order sequence $(0,1,2,3,4,\varepsilon)$, by Proposition \ref{prop1} it  is $\fq$-Frobenius nonclassical w.r.t. $\Sigma_2$ if and only  if the function 
  $$(ax)^qy^q+\big((1-a)y\big)^qx^q-x^qy^q$$
  is vanishing. Thus the result follows.
\end{proof}

\section{Proof of Lemma \ref{trans}}

\begin{proof}
Let $f$, $g$, $h$ be local equations  of $C$, $D_1$, $D_2$ in $\mathcal{O}_{S,P}$, respectively. Then $I(D_1. D_2,P)=\dim_K\mathcal{O}_{S,P}/(g,h)$.
Since $\mathcal{O}_{C,P}=\mathcal{O}_{S,P}/(f)$, the map $\mathcal{O}_{S,P}/(g,h) \rightarrow \mathcal{O}_{C,P}/(\overline{g},\overline{h})$,  where 
$\overline{g}$ and $\overline{h}$ are the images of $f$ and $g$ in $\mathcal{O}_{C,P}$,  is surjective. Hence, 
\begin{equation} \label{Hom1}
I(D_1. D_2,P) \geq \dim_K\mathcal{O}_{C,P}/(\overline{g},\overline{h}).
\end{equation}
On the other hand,
\begin{align*}
I(C . D_1,P) &=\dim_K  \mathcal{O}_{S,P}/(f,g)\\
& = \dim_K  \mathcal{O}_{C,P}/(\overline{g})\\
& = v_P(\overline{g}) \numberthis \label{Hom2}
\end{align*}
where $v_P$ is the valuation of  $\mathcal{O}_{C,P}$, and also
\begin{equation} \label{Hom3}
I(C. D_2,P) =v_P(\overline{h}).
\end{equation}

Let $t \in \mathcal{O}_{C,P}$ be a local parameter. Then $\hat{\mathcal{O}}_{C,P}\cong K[[t]]$. Since $\dim_K  K[[t]]/(\overline{g})=v_P(\overline{g})$ and $\dim K[[t]]/(\overline{h})=v_P(\overline{h})$, we have $\dim_K  K[[t]]/(\overline{g},\overline{h})=min \{v_P(\overline{g}),v_P(\overline{h})\}$. Thus
\begin{equation}\label{Hom4}
\dim_K  \mathcal{O}_{C,P}/(\overline{g},\overline{h})\geq min \{v_P(\overline{g}),v_P(\overline{h})\},
\end{equation}
because  $\mathcal{O}_{C,P}/(\overline{g},\overline{h}) \rightarrow K[[t]]/(\overline{g},\overline{h})$ is surjective.
Therefore, from \ref{Hom1},\ref{Hom2},\ref{Hom3},\ref{Hom4}, we have 
$$I(D_1. D_2,P) \geq min \{I(C . D_1,P),I(C. D_2,P)\}.$$
\end{proof}

\section*{Acknowledgments}
The first author was supported by FAPESP-Brazil, grant 2013/00564-1. The second author is thankful for the great support received from the ICTP-Trieste, as a visitor, during the last two months of this project.


\printindex


\begin{thebibliography}{0}

\bibitem{AB}N. Arakelian, H. Borges, Frobenius nonclassicality with respect to linear systems of curves of arbitrary degree, Acta Arith, 167 (2015) 43--66.


\bibitem{FM} M.D. Fried, R.E. MacRae, On curves with separated variables, Math. Ann. 180 (1969) 220-226.


\bibitem{GV1}A. Garcia, J.F. Voloch, Wronskians and linear independence in fields of prime characteristic,  Manuscripta Math. 59(1987) 457-469.
\bibitem{GV2}A. Garcia, J.F. Voloch, Fermat Curves over finite fields, Journal of Number Theory 30(1988) 345-356.


\bibitem{HKT}J.W.P. Hirshfeld, G. Korchm\'aros, F.Torres, Algebraic curves over a finite field, Princeton Series in Applied Mathematics, 2008.


\bibitem{KS}G. Korchm\'aros, T. Sz\"onyi, Fermat Curves over finite fields and cyclic subsets in high-dimensional projective spaces, Finite Fields and Their Applications, 5(1999) 206-217


\bibitem{Na}M. Namba, Geometry of projective algebraic curves, Pure and Applied Mathematics, Marcel Dekker Inc. (1984).

\bibitem{Pa}R. Pardini, Some remarks on plane curves over fields of positive characteristic, Composito Math. 60(1986) 3-17.


\bibitem{SV}K.O. St\"ohr, J.F. Voloch, Weierstrass points on curves over finite fields, Proc. London Math. Soc. 52(1986) 1-19.


\end{thebibliography}
\end{document}